\theoremstyle{definition}
\newtheorem{theorem}{Theorem}[section]
\newtheorem{theo}[theorem]{Theorem}
\newtheorem{lemma}[theorem]{Lemma}
\newtheorem{proposition}[theorem]{Proposition}
\newtheorem{corollary}[theorem]{Corollary}
\newtheorem{definition}[theorem]{Definition}
\newtheorem{remark}[theorem]{Remark}
\def\oo{\infty}
\def\to{\rightarrow}
\def\a{\alpha}
\def\b{\beta}
\def\de{\delta}
\def\ep{\varepsilon}
\def\o{\omega}
\def\ga{\gamma}
\def\ro{\rho}
\def\ba{\boldsymbol{a}}
\def\bb{\boldsymbol{b}}
\def\bg{\boldsymbol{g}}
\def\bh{\boldsymbol{h}}
\def\bl{\boldsymbol{l}}
\def\bs{\boldsymbol{s}}
\def\bt{\boldsymbol{t}}
\def\en{\subseteq}
\def\N{\mathbb{N}}
\def\R{\mathbb{R}}
\def\C{\mathbb{C}}      
\def\M{\mathbb{M}}
\def\H{\mathbb{H}} 
\def\L{\mathbb{L}} 
\def\G{\mathbb{G}}
\def\m{\textbf{m}} 
\def\h{\textbf{h}}
\def\bm{\textbf{m}}
\def\l{\ensuremath{\boldsymbol\ell}}
\def\para{\par\smallskip}
\def\parb{\par\medskip}
\newcommand{\RR}{\mathbb{R}}
\newcommand{\ZZ}{\mathbb{Z}}
\newcommand{\up}{\operatorname{up}}
\newcommand{\low}{\operatorname{low}}
\let\on=\operatorname
\title[Indices of O-regular variation for weight functions and weight sequences]
{Indices of O-regular variation for weight functions and weight
sequences}
\author[J.~Jim\'{e}nez-Garrido, J.~Sanz, and G.~Schindl]{Javier Jim\'{e}nez-Garrido, Javier Sanz and Gerhard Schindl}
\begin{document}
\begin{abstract}
A plethora of spaces in Functional Analysis (Braun-Meise-Taylor and Carleman ultradifferentiable and ultraholomorphic classes; Orlicz, Besov, Lipschitz, Lebesque spaces, to cite the main ones) are defined by means of a weighted structure, obtained from a weight function or sequence subject to standard conditions entailing desirable properties (algebraic closure, stability under operators, interpolation, etc.) for the corresponding spaces. The aim of this paper is to stress or reveal the true nature of these diverse conditions imposed on weights, appearing in a scattered and disconnected way in the literature: they turn out to fall into the framework of O-regular variation, and many of them are equivalent formulations of one and the same feature. Moreover, we study several indices of regularity/growth for both functions and sequences, which allow for the rephrasing of qualitative properties in terms of quantitative statements.
\end{abstract}

\keywords{ Weight functions and weight sequences, O-regular variation, Matuszewska indices, Legendre conjugates}
\subjclass[2010]{ Primary 26A12; Secondary 26A48, 44A15, 46E10, 46E30}
\date{\today}
\maketitle

\section{Introduction}

The motivation of this paper arises from the study of ultraholomorphic and ultradifferentiable classes of functions, which consist of
smooth or analytic functions defined in an appropriate region $G$ of $\R$, $\C$ or the Riemann surface of the logarithm $\mathcal{R}$ whose derivatives satisfy one of the following estimates for some or all $A>0$: 
\begin{equation}\label{eq:estimation.f.factorialIN}
 \sup_{z\in G, \,\, p\in\N_0}\frac{|f^{(p)}(z)|}{A^p M_p}<\oo, \qquad \text{or}  \qquad \sup_{z\in G, \,\, p\in\N_0} |f^{(p)}(z)| \exp\left(-\frac{\varphi^{*}_{\o}(Ap)}{A} \right) <\oo,
\end{equation}
where $\N_0=\{0,1,2, \dots\}=\N\cup\{0\}$, $\M=(M_p)_{p\in\N_0}$ is a sequence of positive real numbers, $\varphi^{*}_{\o}(x):=\sup\{x y-\o(e^y): y\ge 0\}$ and $\o:[0,\oo)\to[0,\oo)$, see~\cite{BonetMeiseMelikhov07, Komatsu73,MeiseTaylor88, Petzsche88,PetzscheVogt,RainerSchindlcomposition,RainerSchindlExtension17,Schindl16,SchmetsValdivia00}.
For instance, if $G=[a,b]$ is some compact interval of the real line and $\o(t)=t$ or $M_p=p!$, then the class of smooth functions such that \eqref{eq:estimation.f.factorialIN} holds for this choice coincides with the class of analytic functions in $[a,b]$.  \para

One of the main topics regarding these classes is the characterization of the features of the ultradifferentiable or ultraholomorphic class in terms of properties of the corresponding sequence $\M$ or function $\o$. 
It is worthy to mention that some authors, specially when dealing with asymptotic expansions, have defined these classes using another estimate, see~\cite{ChaumatChollet94,JimenezSanz,JimenezSanzSchindlLCSeqandPO,JimenezSanzSchindlInjectSurject,Sanzsummability,Sanzflat14,Thilliez03}.
They assume that for some or all $A>0$: 
\begin{equation}\label{eq:estimation.f.factorialOUT}
 \sup_{z\in G, \,\, p\in\N_0}\frac{|f^{(p)}(z)|}{A^p p! M_p}<\oo.
\end{equation}
In some sense it can be said that the sequence $\M$ measures the lack of analyticity in this situation. 
There is a link between the properties usually assumed for $\M$ and the ones for $\widehat{\M}= (p!M_p)_{p\in\N_0}$. However these relations are not always straight and some considerations need to be  made. Since our purpose is that this work can be applied to both situations, the results are stated in a general framework and suitable comments are provided showing how to use them in each case.\para

In this context, diverse conditions satisfied by $\M$ or by $\o$ have been independently introduced 
adapted to the problem tackled in each work. Frequently, there are not further considerations about the nature of the  different properties  and their connections are not perfectly understood.
In some recent works two indices associated to sequences have appeared: $\ga(\M)$, considered by V.~ Thilliez~\cite{Thilliez03}, and $\o(\M)$, defined by the second author in~\cite{Sanzflat14}.  
They have been shown to measure the limiting opening of the sectors in $\mathcal{R}$ such that for every sector of opening strictly smaller or, respectively bigger, the Borel map in the corresponding ultraholomorphic class  is surjective or, respectively injective, see~\cite{JimenezSanzSchindlInjectSurject}. Similarly, the authors 
have introduced an analogous index $\ga(\o)$ for the function $\o$ solving an extension problem for the pertinent classes,  see~\cite{JimenezSanzSchindlExtensionReal,JimenezSanzSchindlLaplace}.\para

This paper aims to untangle the true essence of these characteristics which have come out in the literature. The solution lies on the classical theory of regular variation, concretely on the notion of O-regular variation, see~\cite{BingGoldTeug89}.
With this tool, the equivalence between the distinct conditions is provided and these qualitative  properties are expressed in terms of some quantitative  values, the growth orders and  the Matuszewska indices, which turn out to coincide with the indices for functions and sequences mentioned above. Furthermore, the differences and similarities between the function approach and the sequence one are highlighted.\para

In the analysis of these conditions, we have found that, apart from the theory of ultraholomorphic and ultradifferentiable classes, they have repeatedly and independently appeared in their different forms in several areas of Functional Analysis, specially dealing with weighted structures. 
For instance, the N-functions defining the Orlicz spaces are usually assumed to satisfy conditions $\Delta_2$ and $\nabla_2$ which can be identified with the properties here studied, see \cite{RaoRen91}. The index $\ga(\M)$ and some of the properties 
for sequences we will deal with have also been shown to be important for the Stieltjes moment problem in general Gelfand-Shilov spaces, see~\cite{LastraSanz09}.
In the previous cases the mass of the function is concentrated at $\oo$, but there are also weighted spaces 
like weighted Bergman, Besov, Lebesgue, or Lipschitz spaces considered by O.~Blasco and other authors, see~\cite{Blasco12} and the references therein, where the mass of the corresponding weight function is concentrated at $0$,
and  similar properties appear, for example \cite[(2.1) and (2.2)]{Blasco12}. 
The same happens for the weighted H\"{o}lder classes studied by E.~M.~Dynkin in~\cite{Dynkin80}
where the modulus of continuity is regular if it satisfies certain conditions related to the ones treated in this paper.  
In these situations the results of this work could be applied after a suitable modification, 
see Remark~\ref{rema:weightsAT0} for further details.\para

Most of those weighted spaces are defined from the classical ones replacing the function $t\mapsto t^\a$ for some $\a>0$ by a general function $t\mapsto\o(t)$. The extension of the classical results to the weighted context highly depends on a power-like behavior of $\o$ which leads to the theory of regular variation whose purpose is the systematic study of such type of behaviors. Hence the scope of this work might go beyond these examples and the results have been stated  
from a quite abstract point of view so they can be applied to diverse situations.\para

At this point we start describing the main achievements obtained in this paper and how they are organized. The second section 
starts by recalling the elementary facts about weight functions, regular and o-regular variation. 
The first important result, Lemma~\ref{lemma.alpha.gamma}, establishes the equality between the upper Matuszewska index $\a(\sigma)$  
and the inverse of the aforementioned index $\ga(\sigma)$ under the basic assumption of the section: $\sigma:[0,\oo)\to[0,\oo)$ is nondecreasing with $\lim_{t\rightarrow\infty}\sigma(t)=\infty$. The main results of the section, Theorems~\ref{th:main.th.alpha} and~\ref{th:main.th.beta},
provide a list of equivalent conditions for some of the basic properties assumed for weight functions, for instance
$$(\o_1) \quad \sigma(2t)=O(\sigma(t)), \,\,\, \text{as}\,\, t\rightarrow\infty\quad\text{and} \quad (\o_{\on{snq}}) \quad \exists C\geq 1:\,\,\, \forall y>0, \,\,\displaystyle\int_1^{\infty}\frac{\sigma(y t)}{t^2}dt\le C\sigma(y)+C, $$
in the first case, see also Corollaries~\ref{coro:omsnq} and~\ref{coro:om1}, and 
$$(\o_6)\quad \exists H\geq 1: \,\,\,  \forall t\ge0,\,\, 2\sigma(t)\leq\sigma(H t)+H,$$
in the second case, see also Corollary~\ref{coro:om1}. Moreover, these results also connect these properties to the Matuszewska indices thanks to the almost monotonicity notions.
Finally, the last subsection is devoted to the study of the relation between the index $\ga$ of a function and the ones of its upper and lower Legendre conjugates.\para 

In the third section, after summarizing the basic facts about weight sequences, we recall the main points of the theory of regularly and o-regularly varying sequences described in the works of R. Bojani\'{c} and E.~Seneta~\cite{BojanicSeneta} and D.~Djur\v{c}i\'{c} and V.~Bo\v{z}in~\cite{DjurcicBozin}, respectively.  The first task carried out in this section has been introducing the notion of growth order and Matuszewska indices, to the best of our knowledge new, and proving some elementary properties of those values, see from Proposition~\ref{prop.nice.def.Mat.ind.ord.seq} to Remark~\ref{remark:shift.seq.Mat.ind.order}. In Theorem~\ref{th:equality.ga.beta.mu.omega}, it is shown that $\ga(\M)$ equals the lower Matuszewska index of the sequence of quotients $\m:=(m_p=M_{p+1}/M_p)_{p\in\N_0}$ and $\o(\M)$ coincides with   
the lower order of $\m$. The section concludes with Theorems~\ref{th:main.beta.sequences} and~\ref{th:main.alpha.sequences} where the strong nonquasianalytic and the moderate growth conditions:
$$\text{(snq)} \quad \exists B\geq 0: \,\,\,  \forall p \in\N_0, \,\, \sum^\oo_{q= p}\frac{M_{q}}{(q+1)M_{q+1}}\le B\frac{M_{p}}{M_{p+1}},$$
$$\text{(mg)} \quad \exists A\geq 0: \,\,\,  \forall p, q\in\N_0, \,\, M_{p+q}\le A^{p+q}M_{p}M_{q}, $$
are characterized in terms of the Matuszewska indices of $\m$ and compared with other conditions appearing in the literature. In the proof we have made use of  
Theorems~\ref{th:main.th.alpha} and~\ref{th:main.th.beta} although it is possible to show them directly
as it has been partially done by the first author in~\cite{JimenezPhD} with similar arguments.\para

The fourth section aims to compare the weight function approach with the weight sequence one through the counting function of the sequence of quotients $\nu_{\m}(t):=\max\{j\in\N:m_{j-1}\le t \}$ and the associated function $\o_{\M}(t):=\sup_{p\in\N_0}\log\left( t^p/M_p\right)$ for all $t\geq 0$. In the first subsection, the duality relation between the orders and Matuszewska indices of these functions and the ones of the corresponding weight sequence is explained. In the second subsection, Corollary~\ref{coro:OmhatMOmMgamma} shows under suitable assumptions that $\ga(\o_{\widehat{\M}})= \ga(\o_{\M})+1$ by means of the Legendre conjugate. The strongly regular sequences, which appear in different issues, see the references in Subsection~\ref{subsect:SRS}, are characterized in  Corollary~\ref{Coro.SRS.Charact} in terms of the Matuszewska indices of $\m$, $\nu_\m$ and $\o_{\M}$. The section ends analyzing the connection between nonzero proximate orders and weight functions. Nonzero proximate orders have been used by A. Lastra, S. Malek and the second author in~\cite{Sanzsummability}  to develop a summability theory for ultraholomorphic classes defined in terms of a weight sequence. Thanks to Corollary~\ref{coro:weightfunction.proximateorders}, we see that the information for ultraholomorphic classes defined in terms of a weight function is the same as the one from the weight sequence case.\para

The last section contains a counter-example of a weight sequence $\M$ such that $\ga(\M)$ and $\ga(\o_{\M})$ do not coincide, so the corresponding properties associated with these indices are not equivalent. This fact clarifies the duality relations  described in the previous section.

\section{Weight functions and O-Regular variation}\label{WeightFandORV}

\subsection{Weight functions \texorpdfstring{$\omega$}{Om} in the sense of Braun-Meise-Taylor}\label{weightfunctionclasses}

A function $\omega:[0,\infty)\rightarrow[0,\infty)$ is called a {\itshape weight function} if it is continuous, nondecreasing, $\omega(0)=0$, and $\lim_{t\rightarrow\infty}\omega(t)=\infty$.
If in addition $\o(t)=0$ for all $t\in[0,1]$ we say that $\o$ is a \textit{normalized weight function}. Moreover, the following conditions are often considered when dealing with weighted spaces of functions:
\begin{itemize}
\item[\hypertarget{om1}{$(\omega_1)}$] $\omega(2t)=O(\omega(t))$ as $t\rightarrow\infty$.

\item[\hypertarget{om2}{$(\omega_2)$}] $\omega(t)=O(t)$ as $t\rightarrow\infty$.

\item[\hypertarget{om3}{$(\omega_3)$}] $\log(t)=o(\omega(t))$ as $t\rightarrow\infty$.

\item[\hypertarget{om4}{$(\omega_4)$}] $\varphi_{\omega}:t\mapsto\omega(e^t)$ is a convex function on $\RR$.

\item[\hypertarget{om5}{$(\omega_5)$}] $\omega(t)=o(t)$ as $t\rightarrow\infty$.

\item[\hypertarget{om6}{$(\omega_6)$}] there exists $H\geq 1$ such that for all $t\ge0$ $2\omega(t)\le\omega(H t)+H$.

\item[\hypertarget{om7}{$(\omega_7)$}]  there exists $H, C\geq 0$ such that for all $t\ge0$  $ \omega(t^2)\le C\omega(H t)+C.$

\item[\hypertarget{omnq}{$(\omega_{\text{nq}})$}] $\displaystyle \int_1^{\infty}\frac{\omega(t)}{t^2}dt<\infty.$

\item[\hypertarget{omsnq}{$(\omega_{\text{snq}})$}]  there exists $C\geq 1$ such that for all $y>0$, $\displaystyle\int_1^{\infty}\frac{\omega(y t)}{t^2}dt\le C\omega(y)+C$.
\end{itemize}

The classical examples are the well-known \textit{Gevrey weights of index $s>1$}, $\o(t)=t^{1/s}$, which define the Gevrey classes, they satisfy all listed properties except \hyperlink{om7}{$(\omega_7)$}. Another interesting example is $\o(t)=\max\{0,\log(t)^s\}$, $s>1$, which satisfies all listed properties except \hyperlink{om6}{$(\omega_6)$}. 
Note that, for a weight function, concavity implies subadditivity, i.e.
$\omega(s+t)\le \omega(s)+\omega(t)$, and this yields \hyperlink{om1}{$(\omega_1)$}.\para

Let $I$ be an unbounded subinterval of $[0,\infty)$ and $\sigma,\tau:I\to[0,\oo)$ be any pair of measurable functions, we  call them \textit{equivalent} and we write $\sigma\hypertarget{sim}{\sim}\tau$ if there exists $C\geq 1$ such that for every $t\in I$ 
$$C^{-1}\tau(t)-C\leq \sigma (t) \leq C\tau(t)+C.$$

\subsection{Regular variation and O-Regular variation}

The notion of regular variation was formally introduced in 1930 by J.~Karamata~\cite{Karamata30} and has several applications in analytic number theory, complex analysis and, specially, in probability. The proofs of most of the results in this subsection are gathered in the book of N.~H.~Bingham, C.~M.~Goldie and J.~L.~Teugels~\cite{BingGoldTeug89}.  Until the end of this subsection, we assume that\parb

\centerline{\textbf{$f:[A,\oo)\to(0,\oo)$, with  $A> 0$, is a measurable function.}} \parb
 
We say that $f$ is \textit{regularly varying} if for every $\lambda\in(0,\oo)$, 
 \begin{equation}\label{equation.def.RV}
 \lim_{x\to\oo} \frac{f(\lambda x)}{f(x)} = g(\lambda)\in(0,\oo).
 \end{equation}

 There are three main results of this theory: the Uniform Convergence, the Representation and the Characterization Theorems~\cite[Th. \ 1.3.1, Th. \ 1.4.1 and Th.\ 1.5.2]{BingGoldTeug89}, the last one states that, if $f$ is regularly varying, then there exists $\ro\in\R$ such that the function $g(\lambda)$ in~\eqref{equation.def.RV} is equal to $\lambda^\ro$. In this case, $\ro$ is called the \textit{index of regular variation of $f$}, we write $f\in R_\ro$ and $RV:=\cup_{\ro>0} R_\ro$. If $\ro=0$, then $f$ is said to be \textit{slowly varying}.\para

Consequently, the behavior of a regularly varying function at $\oo$ is in some sense similar to the behavior of a power-like function. If for $n\in\N$, $\log_n x$ denotes the $n-$th iteration of the logarithm, given $n_i\in\N$  and $\a_i\in\R$ for $i=0,1\dots ,k$, the classical example of a regularly varying function is
$$f(x)=x^{\a_0} (\log_{n_1} x)^{\a_1} (\log_{n_2} x)^{\a_2} \cdots (\log_{n_k} x)^{\a_k}.  $$
 

For some of our purposes, the theory of regular variation is too restrictive and one may ask what remains valid if we replace
$\lim$ by $\limsup$ and $\liminf$ in~\eqref{equation.def.RV}. This extension of the class of regularly varying functions  was defined by J.~Karamata, V. G.~Avakumovi\'{c}, considered by W.~Matuszewska~\cite{Matuszewska} and W.~Feller~\cite{Feller} and systematically studied by S.~Aljan\v{c}i\'c and I.~D.~ Arandjelovi\'c~\cite{AljAra77} in 1977. 
We say that $f$ is \textit{O-regularly varying} \cite[p. 61]{BingGoldTeug89} if 
 \begin{equation}\label{equation.def.ORV}
 0< f_{\low}(\lambda):= \liminf_{x\to\oo}  \frac{f(\lambda x)}{f(x)} \leq f^{\up}(\lambda):= \limsup_{x\to\oo} \frac{f(\lambda x)}{f(x)} <\oo
 \end{equation}
for every $\lambda\geq 1$, and we write $f\in ORV$.  This weaker notion preserves several desirable properties, in particular, the three main theorems of regular variation have their adapted version.

\begin{remark}\label{rema.def.ORVF.limsup.lambda.pos}
 We observe that $f_{\low}(\lambda)=1/f^{\up}(1/\lambda)$ for every $\lambda\geq 1$. Consequently, if $f\in ORV$, then \eqref{equation.def.ORV} holds for every $\lambda\in (0,\oo)$ and we deduce that $RV\en ORV$. Moreover, $f\in ORV$ if and only if 
 $$ f^{\up}(\lambda)= \limsup_{x\to\oo} \frac{f(\lambda x)}{f(x)} <\oo\qquad \text{for every $\lambda\in (0,\oo)$.}$$
\end{remark}

In this general context,  the index $\ro$ of regular variation is split into two values, the  Matuszewska indices \cite[p. 68]{BingGoldTeug89}. For any positive function, the \textit{upper Matuszewska index $\a(f)$} is defined by
\begin{equation*}
\a(f):=\inf\left\{\a\in\R;\,\, \exists C_\a>0\,\, \text{s.t.}\,\, \forall\Lambda>1,\,\, \limsup_{x\to\oo} \sup_{\lambda\in[1,\Lambda]} \frac{f(\lambda x)}{\lambda^\a f(x)}\leq C_\a \,  \right\}
\end{equation*}
 and the \textit{lower Matuszewska index $\b(f)$} by
\begin{equation*}
\b(f):=\sup\left\{\b\in\R;\,\, \exists D_\b>0\,\, \text{s.t.}\,\, \forall\Lambda>1,\,\, \liminf_{x\to\oo} \inf_{\lambda\in[1,\Lambda]}  \frac{f(\lambda x)}{\lambda^\b f(x)}\geq D_\b,\,  \right\}.
\end{equation*}

\begin{remark}\label{rema:inf.sup.empty}
Since these sets are either empty or unbounded intervals from above for $\a$ and, respectively form below for $\b$, we are allowed to use the classical conventions $\inf \emptyset=\sup\R=\oo$ and $\inf\R=\sup\emptyset=-\oo$.
\end{remark}

Moreover, the inequality $\b(f)\leq \a(f)$ always holds.  The finiteness of these indices characterizes O-regular variation.

\begin{theorem}[\cite{BingGoldTeug89}, Th. 2.1.7]\label{th:Ch.ORVF}
 $f$ is \textit{O-regularly varying} if  and only if $\a(f)<\oo$ and $\b(f)>-\oo$.
\end{theorem}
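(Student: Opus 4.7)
The plan has two directions, with the nontrivial one going $f\in ORV\Rightarrow \a(f)<\oo$ and $\b(f)>-\oo$.

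For the easy direction, suppose $\a(f)<\oo$ and $\b(f)>-\oo$. Given any $\lambda\ge 1$, apply the defining property of $\a(f)$ with any $\a>\a(f)$ and with $\Lambda:=\lambda+1$: there exists $C_\a>0$ such that
\[
f^{\up}(\lambda)=\limsup_{x\to\oo}\frac{f(\lambda x)}{f(x)}\le \lambda^{\a}\,\limsup_{x\to\oo}\sup_{\mu\in[1,\Lambda]}\frac{f(\mu x)}{\mu^\a f(x)}\le C_\a\lambda^\a<\oo.
\]
Symmetrically, for any $\b<\b(f)$ one gets $f_{\low}(\lambda)\ge D_\b\lambda^\b>0$ for $\lambda\ge 1$. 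Together with Remark~\ref{rema.def.ORVF.limsup.lambda.pos} this yields $f\in ORV$.

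For the hard direction, assume $f\in ORV$. The key algebraic fact is submultiplicativity of $f^{\up}$, i.e.\ $f^{\up}(\lambda\mu)\le f^{\up}(\lambda)f^{\up}(\mu)$ for $\lambda,\mu>0$, which is immediate from the definition of $\limsup$. Setting $\psi(u):=\log f^{\up}(e^u)$, this means $\psi:\R\to\R$ is subadditive. The adapted uniform convergence theorem for $ORV$ (the $O$-analogue of Th.\ 1.3.1 in~\cite{BingGoldTeug89}, valid for measurable $f$) ensures that $f^{\up}$ is locally bounded on $(0,\oo)$, so $\psi$ is locally bounded. By the classical Hille-type result for subadditive, locally bounded functions, $\a^*:=\limsup_{u\to\oo}\psi(u)/u$ is finite, and for every $\a>\a^*$ there exists $C_\a>0$ with $f^{\up}(\lambda)\le C_\a\lambda^\a$ for all $\lambda\ge 1$.

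Finally, this pointwise bound on $f^{\up}$ must be promoted to the uniform bound over $\lambda\in[1,\Lambda]$ required by the definition of $\a(f)$. Again by the $ORV$ uniform convergence theorem, for each fixed $\Lambda>1$ and $\eta>0$, there is $x_0$ such that $f(\lambda x)/f(x)\le (1+\eta)f^{\up}(\lambda)$ uniformly in $\lambda\in[1,\Lambda]$ for $x\ge x_0$; combined with the previous bound this gives $\sup_{\lambda\in[1,\Lambda]}f(\lambda x)/(\lambda^\a f(x))\le (1+\eta)C_\a$, so $\a(f)\le\a<\oo$. The argument that $\b(f)>-\oo$ is obtained in a mirrored way by working with $1/f_{\low}$ (which is also submultiplicative via $f_{\low}(\lambda)=1/f^{\up}(1/\lambda)$ noted in Remark~\ref{rema.def.ORVF.limsup.lambda.pos}) or equivalently by applying the same subadditive theory to $-\log f_{\low}(e^u)$.

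The principal obstacle is not the algebraic manipulation but the step where the pointwise $\limsup$-condition is upgraded to a uniform one on compact sets of $\lambda$: this depends essentially on the measurability assumption on $f$ via the $ORV$ uniform convergence theorem. Once that tool is available, the rest is the standard subadditive/submultiplicative functions machinery.
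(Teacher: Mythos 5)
The paper offers no proof of this statement; it is quoted directly from \cite{BingGoldTeug89} (Th.~2.1.7), so your attempt has to be judged against the standard argument there. Your easy direction ($\a(f)<\oo$ and $\b(f)>-\oo$ imply $f\in ORV$) is correct, and the architecture of your hard direction (measurability of $f$ feeding a uniformity theorem on compact $\lambda$-sets, then submultiplicativity/iteration) is indeed the classical route.

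The gap is in your final step, where the pointwise bound $f^{\up}(\lambda)\le C_\a\lambda^\a$ is upgraded to the uniform bound required by the definition of $\a(f)$. You invoke an ``$ORV$ uniform convergence theorem'' in the form: for every $\Lambda>1$ and $\eta>0$ there is $x_0$ with $f(\lambda x)/f(x)\le(1+\eta)f^{\up}(\lambda)$ for all $\lambda\in[1,\Lambda]$, $x\ge x_0$. The $O$-analogue of the uniform convergence theorem does not assert this: it gives only uniform \emph{boundedness}, $\limsup_{x\to\oo}\sup_{\lambda\in[1,\Lambda]}f(\lambda x)/f(x)<\oo$, and a uniform domination by the pointwise upper limit function $f^{\up}$ (which can be quite irregular) is a strictly stronger statement that you neither prove nor can quote. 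Note also that the definition of $\a(f)$ demands a single constant $C_\a$ valid for every $\Lambda>1$, so uniform boundedness applied separately to each $[1,\Lambda]$ would not suffice either. The standard repair bypasses your detour through the subadditivity of $\log f^{\up}$ altogether: fix one $\Lambda_0>1$ and set $B:=\limsup_{x\to\oo}\sup_{\lambda\in[1,\Lambda_0]}f(\lambda x)/f(x)<\oo$, which is exactly what the uniformity theorem for measurable $ORV$ functions provides. Given $\epsilon>0$ choose $x_0$ with $\sup_{\lambda\in[1,\Lambda_0]}f(\lambda x)/f(x)\le B+\epsilon$ for $x\ge x_0$; for any $\lambda\ge 1$ write $\lambda=\Lambda_0^{\,n}\mu$ with $n\in\N_0$, $\mu\in[1,\Lambda_0)$, and telescope
\[
\frac{f(\lambda x)}{f(x)}=\frac{f(\Lambda_0^{\,n}\mu x)}{f(\Lambda_0^{\,n-1}\mu x)}\cdots\frac{f(\Lambda_0\mu x)}{f(\mu x)}\cdot\frac{f(\mu x)}{f(x)}\le (B+\epsilon)^{\,n+1}\le (B+\epsilon)\,\lambda^{\a},\qquad \a:=\frac{\log(B+\epsilon)}{\log\Lambda_0},
\]
which is legitimate because every intermediate argument $\Lambda_0^{\,k}\mu x\ge x\ge x_0$. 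This single estimate holds for all $\lambda\ge1$ and $x\ge x_0$ simultaneously, hence for every $[1,\Lambda]$ with the $\Lambda$-independent constant $B+\epsilon$, giving $\a(f)\le\a<\oo$; the mirrored argument with $b:=\liminf_{x\to\oo}\inf_{\lambda\in[1,\Lambda_0]}f(\lambda x)/f(x)>0$ (or your identity $f_{\low}(\lambda)=1/f^{\up}(1/\lambda)$) yields $\b(f)>-\oo$. With this replacement your proof becomes complete; as written, the uniformization step rests on an unproved strengthening of the available tool.
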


These indices admit a nicer and useful representation in terms of some almost monotonicity properties. We call a function $h:[a,+\infty)\rightarrow[0,+\infty)$, with $a \geq 0$, {\itshape almost increasing,} if there exists some $M>0$ such that $h(x)\le Mh(y)$ for all $a\le x\le y<+\infty$. Analogously we call $h$ {\itshape almost decreasing,} if there exists some $m>0$ such that $m h(y)\le h(x)$ for all $a\le x\le y<+\infty$.

\begin{theorem}[\cite{BingGoldTeug89} Th. 2.2.2]\label{th:almostmonotonerep.alpha.beta} For $f$ as above, 
\begin{equation*}\label{matu1}
\alpha(f)=\inf\{\alpha\in\RR: x\mapsto\frac{f(x)}{x^{\alpha}}\;\text{is almost decreasing}\},
\end{equation*}
\begin{equation*}\label{matu2}
\beta(f)=\sup\{\beta\in\RR: x\mapsto\frac{f(x)}{x^{\beta}}\;\text{is almost increasing}\}.
\end{equation*}
\end{theorem}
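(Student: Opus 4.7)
The plan is to establish $\alpha(f)=\alpha'(f)$, where I write $\alpha'(f)$ for the right-hand side of the first displayed identity; the $\beta$-identity then follows by a symmetric dualization. The easy inclusion $\alpha(f)\le\alpha'(f)$ is direct: if $x\mapsto f(x)/x^\alpha$ is almost decreasing with constant $M$, then $f(\lambda x)\le M\lambda^\alpha f(x)$ for all $\lambda\ge 1$ and $x\ge A$, whence $\sup_{\lambda\in[1,\Lambda]}f(\lambda x)/(\lambda^\alpha f(x))\le M$ uniformly in $x$ and $\Lambda$, so $\alpha$ belongs to the defining set of $\alpha(f)$.

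For the reverse inequality $\alpha'(f)\le\alpha(f)$, I would argue by an iteration. Fix $\alpha>\alpha(f)$ and pick an intermediate $\alpha'\in(\alpha(f),\alpha)$; by definition there exists $C>0$ \emph{independent of $\Lambda$} such that for every $\Lambda>1$ one can find $X_\Lambda$ with $f(\lambda x)\le(C+1)\lambda^{\alpha'}f(x)$ for all $x\ge X_\Lambda$ and $\lambda\in[1,\Lambda]$. I would now choose $\Lambda$ so large that $\log(C+1)/\log\Lambda\le\alpha-\alpha'$. For $y\ge x\ge X_\Lambda$, write $y/x=\Lambda^k\mu$ with a nonnegative integer $k$ and $\mu\in[1,\Lambda)$, and iterate the inequality along the chain $x,\Lambda x,\dots,\Lambda^k x,y$ to obtain
\[ f(y)\le(C+1)^{k+1}(y/x)^{\alpha'}f(x)\le(C+1)(y/x)^{\alpha'+\log_\Lambda(C+1)}f(x)\le(C+1)(y/x)^{\alpha}f(x), \]
using $k+1\le 1+\log_\Lambda(y/x)$ in the middle step and the choice of $\Lambda$ in the last. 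Thus $f(x)/x^\alpha$ is almost decreasing on $[X_\Lambda,\infty)$, and since a measurable O-regularly varying function is locally bounded (so that the constant can be adjusted on the compact piece $[A,X_\Lambda]$) this extends to $[A,\infty)$, placing $\alpha$ in the defining set of $\alpha'(f)$.

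The $\beta$-identity follows the same template with inequalities reversed: for $\beta<\beta(f)$ one picks $\beta'\in(\beta,\beta(f))$, obtains a uniform lower bound $f(\lambda x)\ge D\lambda^{\beta'}f(x)$ on $[1,\Lambda]$ valid for $x$ large, and iterates analogously with $\Lambda$ chosen large enough that $\log_\Lambda(1/D)\le\beta'-\beta$. The main obstacle is the iteration step in the forward direction: a single application of the defining inequality only delivers almost-decrease at the enlarged exponent $\alpha'+\log_\Lambda(C+1)>\alpha'$, and it is essential to exploit the quantifier ``$\forall\Lambda>1$'' in the definition of $\alpha(f)$---which supplies the constant $C$ \emph{uniformly in $\Lambda$}---so that the error term $\log_\Lambda(C+1)$ can be driven below the prescribed gap $\alpha-\alpha'$.
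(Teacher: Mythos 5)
The paper does not prove this statement at all: it is quoted as Theorem 2.2.2 of Bingham--Goldie--Teugels, so your argument can only be compared with the standard textbook proof, and indeed it is essentially that proof. The easy inclusion is correct, and in the converse direction you have isolated exactly the right point: since the constant $C$ in the definition of $\alpha(f)$ is attached to the exponent $\alpha'$ and not to $\Lambda$, you may first fix $\alpha'\in(\alpha(f),\alpha)$ and then choose $\Lambda$ so large that the per-step loss $\log_{\Lambda}(C+1)$ is at most $\alpha-\alpha'$; the subdivision $y/x=\Lambda^{k}\mu$ and the iteration then give $f(y)\le (C+1)(y/x)^{\alpha}f(x)$ for $y\ge x\ge X_{\Lambda}$, i.e.\ almost decrease of $f(x)/x^{\alpha}$ on $[X_{\Lambda},\infty)$. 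The mirrored argument (or the dualization $f\mapsto 1/f$, which exchanges $\beta(f)$ with $-\alpha(1/f)$ and almost increase with almost decrease) handles the $\beta$-identity. Up to this point the proof is sound.

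The one step that does not hold up is the passage from $[X_{\Lambda},\infty)$ to the whole domain $[A,\infty)$. The justification ``a measurable O-regularly varying function is locally bounded'' is not available: O-regular variation is a purely asymptotic condition and says nothing about $f$ near the finite endpoint $A$; moreover what you actually need there is boundedness \emph{away from zero} as well as above. For instance, a positive measurable $f$ with $f\equiv 1$ on $[A+1,\infty)$ but $f(x)\to 0$ as $x\downarrow A$ is O-regularly varying and locally bounded, yet $f(x)/x^{\alpha}$ is not almost decreasing on $[A,\infty)$ for any $\alpha$, so with the paper's global definition of almost monotonicity the displayed identity would even be false for such $f$. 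The statement must therefore be read with almost monotonicity on some half-line $[X,\infty)$ (this is how Bingham--Goldie--Teugels use it, and how the paper applies it: their $\sigma$ is nondecreasing and restricted to an interval where it is positive, so it is automatically bounded above and below on the compact piece and the constant can be adjusted). With that reading your proof is complete once the almost decrease on $[X_{\Lambda},\infty)$ is established, and the extension step should simply be deleted or justified by the monotonicity and positivity available in the intended applications rather than by an appeal to O-regular variation.
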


\begin{remark}\label{rema:ordersROMU}
These indices are related to the classical {\itshape  lower order $\mu(f)$}  and {\itshape  upper order $\ro(f)$} defined by 
\begin{equation*}
\mu(f):=\liminf_{x\rightarrow\infty}\frac{\log(f(x))}{\log x}, \qquad \ro(f):=\limsup_{x\rightarrow\infty}\frac{\log f(x)}{\log x},
\end{equation*}
in the following way: 
$\b(f)\le \mu(f) \leq \ro (f) \leq \a(f),$
see \cite[Prop.  2.2.5]{BingGoldTeug89}.  If $f\in R_\ro$, then 
$\b(f)=\mu(f)=\ro(f)=\a(f)=\ro$. However, in general, the inequalities are strict and the equality of indices and orders does not guarantee regular variation.  
\end{remark}

As a consequence of the  almost monotone characterization the following properties are deduced.

\begin{remark}\label{rema.properties.Matuszewska.indices} If $f,g:[A,\oo)\to(0,\oo)$, with  $A > 0$, are measurable functions with
\begin{equation}\label{eq:equiv.functionsORV}
  0<\liminf_{x\to\oo} \frac{f(x)}{g(x)}\le \limsup_{x\to\oo} \frac{f(x)}{g(x)}<\oo,
\end{equation}
 then $\b(f)=\b(g)$, $\mu(f)=\mu(g)$, $\ro(f)=\ro(g)$ and $\a(f)=\a(g)$.\para
 
 For any $s>0$ and $r\in \R$, we put $f_s(t):=f(t^s)$,  $f^s(t):= (f(t))^s$ and $p_r(t):=t^r$, and we see that
 $$\b(f_s)=s\b(f),\qquad \b(f^s)=s\b(f),\qquad  \b(f\cdot p_r)= r+\b(f).$$
 The same is valid if we replace the index $\b$ by $\mu$, $\ro$ or $\a$. Moreover, if $s<0$, then 
 $\a(f^s)=s\b(f)$, $\ro(f^s)=s\mu(f)$, $\mu(f^s)=s\ro(f)$ and $\b(f^s)=s\a(f)$. 
\end{remark}

These indices might be difficult to handle. Fortunately,  if some additional property holds, then
a combination of \cite[Th. 2.1.5, Coro. 2.1.6, Th. 2.1.7]{BingGoldTeug89} leads to this suitable
 representation.

\begin{theorem}\label{th:convenient.rep.alpha.beta}
If $\a(f)<\oo$ or $\b(f)>-\oo$, then
$$\alpha(f)=\lim_{\lambda\to\oo} \frac{\log(f^{\on{up}}(\lambda))}{\log \lambda} =\inf_{\lambda>1}\frac{\log(f^{\on{up}}(\lambda))}{\log \lambda },$$
$$\beta(f)=\lim_{\lambda\to\oo}\frac{\log(f_{\on{low}}(\lambda))}{\log \lambda } =\sup_{\lambda>1}\frac{\log(f_{\on{low}}(\lambda))}{\log \lambda}.$$
\end{theorem}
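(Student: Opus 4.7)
The plan is to establish the chain of equalities for $\alpha(f)$; the analogous statement for $\beta(f)$ is obtained by the mirror argument, replacing $f^{\up}$ with $f_{\low}$, submultiplicativity with supermultiplicativity, and infimum with supremum throughout. Three ingredients are combined: the submultiplicativity of $f^{\up}$, a Fekete-type limit-equals-infimum result, and the almost-monotone characterization of $\alpha(f)$ recorded in Theorem~\ref{th:almostmonotonerep.alpha.beta}.

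\smallskip

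First I would show $f^{\up}$ is submultiplicative on $[1,\infty)$. For $\lambda,\mu\ge 1$ the identity
$$\frac{f(\lambda\mu x)}{f(x)}=\frac{f(\lambda(\mu x))}{f(\mu x)}\cdot\frac{f(\mu x)}{f(x)}$$
and passage to $\limsup_{x\to\infty}$ yield $f^{\up}(\lambda\mu)\le f^{\up}(\lambda)f^{\up}(\mu)$. The hypothesis $\alpha(f)<\infty$, together with Theorem~\ref{th:almostmonotonerep.alpha.beta} applied to any $\alpha>\alpha(f)$, guarantees $f^{\up}(\lambda)\le M\lambda^{\alpha}<\infty$ for all $\lambda\ge 1$, and (via Remark~\ref{rema.def.ORVF.limsup.lambda.pos}) also for $\lambda\in(0,1)$. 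Hence $p(u):=\log f^{\up}(e^u)$ is a finite, measurable, subadditive function on $[0,\infty)$; a standard measurability-plus-subadditivity argument makes $p$ locally bounded, so Fekete's subadditive lemma delivers
$$\lim_{u\to\infty}\frac{p(u)}{u}=\inf_{u>0}\frac{p(u)}{u},$$
i.e.\ after substituting $\lambda=e^u$, $\lim_{\lambda\to\infty}\log f^{\up}(\lambda)/\log\lambda=\inf_{\lambda>1}\log f^{\up}(\lambda)/\log\lambda$.

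\smallskip

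Next I would identify this common value $L$ with $\alpha(f)$ using Theorem~\ref{th:almostmonotonerep.alpha.beta}. For $L\le\alpha(f)$: for any $\alpha>\alpha(f)$ the map $x\mapsto f(x)/x^{\alpha}$ is almost decreasing with some constant $M$; setting $y=\lambda x$ with $\lambda\ge 1$ yields $f(\lambda x)/f(x)\le M\lambda^{\alpha}$, so $f^{\up}(\lambda)\le M\lambda^{\alpha}$ and $\log f^{\up}(\lambda)/\log\lambda\le\alpha+\log M/\log\lambda$; letting first $\lambda\to\infty$ and then $\alpha\downarrow\alpha(f)$ gives $L\le\alpha(f)$. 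For the reverse $\alpha(f)\le L$: given any $\alpha>L$, choose $\lambda_{0}>1$ with $f^{\up}(\lambda_{0})<\lambda_{0}^{\alpha}$, which provides $X_{0}$ such that $f(\lambda_{0}x)\le\lambda_{0}^{\alpha}f(x)$ for $x\ge X_{0}$. Iterating along the geometric progression $\{\lambda_{0}^{n}X_{0}\}$, filling in the fractional gaps by the local bound $\sup_{\mu\in[1,\lambda_{0}]}f(\mu x)/f(x)\le C$, and absorbing the compact piece $[A,X_{0}]$ into the constant, one concludes that $f(x)/x^{\alpha}$ is almost decreasing, so Theorem~\ref{th:almostmonotonerep.alpha.beta} gives $\alpha(f)\le\alpha$, hence $\alpha(f)\le L$.

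\smallskip

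The main obstacle is the reverse identification just sketched: the condition $f^{\up}(\lambda_{0})<\lambda_{0}^{\alpha}$ is asymptotic and pointwise in $\lambda$, while almost-decreasingness of $f(x)/x^{\alpha}$ is a uniform statement over all pairs $x\le y$ in $[A,\infty)$. Bridging the two requires uniform boundedness of $\sup_{\mu\in[1,\lambda_{0}]}f(\mu x)/f(x)$ on $x$-ranges of the form $[X,\infty)$, together with a way to stitch the compact piece $[A,X_{0}]$ onto the iterated estimate on $[X_{0},\infty)$. These are precisely the technical outputs of the uniform convergence theorem for O-regularly varying functions (\cite[\S2.0]{BingGoldTeug89}) under the standing measurability of $f$ and the assumption $\alpha(f)<\infty$, which are the hidden cost behind the clean combination of \cite[Th.~2.1.5, Coro.~2.1.6, Th.~2.1.7]{BingGoldTeug89} invoked in the statement.
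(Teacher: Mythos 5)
The paper itself offers no proof of this theorem: it is imported as a combination of \cite[Th.~2.1.5, Coro.~2.1.6, Th.~2.1.7]{BingGoldTeug89}, so your attempt has to be judged as a reconstruction of that material. Your treatment of the principal case is essentially the standard argument and is sound: submultiplicativity of $f^{\up}$, the subadditive limit theorem, and the identification of the common value with $\alpha(f)$ through Theorem~\ref{th:almostmonotonerep.alpha.beta}. Two of your technical worries are actually non-issues: measurability of $\lambda\mapsto f^{\up}(\lambda)$ (a $\limsup$ over a continuum of $x$) is unclear, but you do not need it, since your own bound $f^{\up}(\lambda)\le M\lambda^{\alpha}$ already provides the local upper boundedness that the subadditive limit theorem requires; and the uniform bound $\sup_{\mu\in[1,\lambda_{0}]}f(\mu x)/f(x)\le C$ for large $x$ in the reverse step follows directly from the Matuszewska definition of $\alpha(f)<\infty$ (or from the almost-decreasing property of $f(x)/x^{\alpha}$), so no appeal to the uniform convergence theorem is needed there.

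The genuine gap is the disjunctive hypothesis. Your argument establishes the $\alpha$-formula only under $\alpha(f)<\infty$, and (by the announced mirror argument) the $\beta$-formula only under $\beta(f)>-\infty$, whereas the theorem asserts both formulas under either assumption; moreover the paper uses precisely the case you leave out: in Lemma~\ref{lemma.alpha.gamma} the $\alpha$-representation is invoked knowing only $\beta(\sigma)\ge 0$, with $\alpha(\sigma)=\infty$ explicitly admitted. So you must additionally show that if $\beta(f)>-\infty$ and $\alpha(f)=\infty$, then $f^{\up}(\lambda)=\infty$ for every $\lambda>1$, so that both the limit and the infimum equal $+\infty=\alpha(f)$. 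Equivalently: if $\beta(f)>-\infty$ and $f^{\up}(\lambda_{1})<\infty$ for a single $\lambda_{1}>1$, then $\alpha(f)<\infty$, after which your main case applies. This is where the uniformity built into the definition of $\beta(f)>-\infty$ is genuinely needed: for $x$ large and all $\mu\in[1,\lambda_{1}]$ one has $f(\lambda_{1}x)=f\bigl((\lambda_{1}/\mu)\,\mu x\bigr)\ge D\,(\lambda_{1}/\mu)^{\beta}f(\mu x)$, hence $\sup_{\mu\in[1,\lambda_{1}]}f(\mu x)/f(x)\le C\,f(\lambda_{1}x)/f(x)$ is eventually bounded; iterating along powers of $\lambda_{1}$ bounds $\limsup_{x\to\infty}\sup_{\lambda\in[1,\Lambda]}f(\lambda x)/(\lambda^{A}f(x))$ for every $\Lambda$ with $A=\log\bigl(f^{\up}(\lambda_{1})+1\bigr)/\log\lambda_{1}$ and a constant independent of $\Lambda$, giving $\alpha(f)\le A<\infty$. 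The symmetric argument is needed for the $\beta$-formula when only $\alpha(f)<\infty$ is assumed (there one must show $f_{\low}(\lambda)=0$ for all $\lambda>1$ whenever $\beta(f)=-\infty$). Without these cross-case arguments the statement, in the form in which the paper actually applies it, is not established.
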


Hence, this theorem holds for O-regularly varying functions but also under weaker conditions.   In particular,
the next lemma, which is a consequence of Theorem~\ref{th:almostmonotonerep.alpha.beta}, ensures that 
 Theorem~\ref{th:convenient.rep.alpha.beta} is available for monotone (not necessarily ORV) functions. This situation occurs
when dealing with weight functions or sequences.

\begin{lemma}\label{lemma:beta.alpha.monotone.function}
\begin{enumerate}[(i)]
 \item If $f$ is nondecreasing, then $\b(f)\ge 0$.  Hence $f\in ORV$ if and only if $\a(f)<\oo$.
 \item If $\b(f)>0$, then there exists an nondecreasing function $g$  satisfying \eqref{eq:equiv.functionsORV}. 
  \item If $f$ is nonincreasing, then $\a(f)\le 0$. Hence $f\in ORV$ if and only if $\b(f)>-\oo$. 
 \item If $\a(f)<0$, then there exists a nonincreasing function  $g$  satisfying \eqref{eq:equiv.functionsORV}.  
\end{enumerate}
\end{lemma}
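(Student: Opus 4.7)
The plan is to extract everything from the almost-monotone representation of the Matuszewska indices given in Theorem~\ref{th:almostmonotonerep.alpha.beta}, since monotonicity of $f$ immediately interacts well with that characterization. The two bounds in (i) and (iii) are essentially one-line observations, whereas (ii) and (iv) require constructing a concrete monotone $g$ equivalent to $f$ in the sense of \eqref{eq:equiv.functionsORV}.

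For (i), I would note that if $f$ is nondecreasing, then the map $x\mapsto f(x)/x^{0}=f(x)$ is almost increasing with constant $M=1$, so $0$ belongs to the sup defining $\b(f)$ in Theorem~\ref{th:almostmonotonerep.alpha.beta}; hence $\b(f)\ge 0$. The equivalence $f\in ORV\iff\a(f)<\oo$ then follows from Theorem~\ref{th:Ch.ORVF}, since $\b(f)\ge 0>-\oo$ is automatic. Assertion (iii) is symmetric: for nonincreasing $f$, the map $x\mapsto f(x)/x^{0}$ is almost decreasing with constant $m=1$, so $\a(f)\le 0$, and Theorem~\ref{th:Ch.ORVF} leaves only $\b(f)>-\oo$ to be checked.

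For (ii), assuming $\b(f)>0$, I would pick $\b\in(0,\b(f))$ and invoke Theorem~\ref{th:almostmonotonerep.alpha.beta} to get $M>0$ with $f(x)/x^{\b}\le M f(y)/y^{\b}$ for $A\le x\le y$. Since $\b>0$ and $x\le y$, this yields $f(x)\le M f(y)$. Then the natural candidate is
\[
g(y):=\sup_{A\le x\le y}f(x),
\]
which is nondecreasing and satisfies $f(y)\le g(y)\le M f(y)$, giving \eqref{eq:equiv.functionsORV}. For (iv), the mirror argument works: if $\a(f)<0$, pick $\a\in(\a(f),0)$, obtain $m>0$ with $m f(y)/y^{\a}\le f(x)/x^{\a}$ for $A\le x\le y$, and use $\a<0$, $x\le y$ to conclude $f(y)\le f(x)/m$. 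Then set
\[
g(y):=\sup_{x\ge y}f(x),
\]
which is nonincreasing, finite-valued (bounded by $f(y)/m$), and satisfies $f(y)\le g(y)\le f(y)/m$, again yielding \eqref{eq:equiv.functionsORV}.

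The whole argument is essentially bookkeeping with the almost-monotone representation; there is no real obstacle. The only subtlety to be careful about is that the almost-increasing/decreasing inequalities in Theorem~\ref{th:almostmonotonerep.alpha.beta} are only asserted beyond some starting point, so in (ii) and (iv) one should replace $A$ by a suitably large $A'$ when defining $g$, which does not affect the equivalence \eqref{eq:equiv.functionsORV} since only the asymptotic behavior matters.
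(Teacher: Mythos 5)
Your proof is correct and follows exactly the route the paper indicates: the lemma is presented there as a consequence of the almost-monotone representation in Theorem~\ref{th:almostmonotonerep.alpha.beta} (combined with Theorem~\ref{th:Ch.ORVF} for the ORV characterizations), which is what you carry out, with the running supremum $g(y)=\sup_{x\le y}f(x)$ (resp. $\sup_{x\ge y}f(x)$) supplying the explicit monotone equivalent in (ii) and (iv). No gaps; your remark about only needing the inequalities for large arguments is the right precaution and does not affect \eqref{eq:equiv.functionsORV}.
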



\subsection{Index \texorpdfstring{$\gamma$}{Gamma} for nondecreasing functions and its relation to \texorpdfstring{$\a$}{Alpha}}\label{growthindexgamma}

In \cite{JimenezSanzSchindlExtensionReal,JimenezSanzSchindlLaplace}, the index $\ga(\o)$ was introduced in order to measure the limit opening which the Borel map defined in the corresponding ultraholomorphic class in a sector of the Riemann surface of the logarithm is surjective for. The definition is based on \cite[Prop.\ 1.3]{MeiseTaylor88} and has
the same spirit as V.~Thilliez's index $\ga(\M)$ for sequences considered in~\cite{Thilliez03}.\para

This index was originally defined for a weight function $\o$, but due to the general approach of this paper 
 we will work in a more general framework. Until the end of the section we deal with:\parb

\centerline{\textbf{$\sigma:[0,\oo)\to[0,\oo)$, nondecreasing with $\lim_{t\rightarrow\infty}\sigma(t)=\infty$.}}\parb

Although even weaker assumptions may be considered, this approach is enough to cover at the same time the weight function and the weight sequence case.
We can also treat other weighted structures introduced and used in different fields of Functional Analysis as it has been explained in the introduction, see also Remark~\ref{rema:weightsAT0}.
Let $\sigma$ and $\gamma>0$ be given, we say that $(P_{\sigma,\gamma}) $ holds if there exists $K>1$ such that $$\limsup_{t\to\oo} \frac{\sigma(K^{\gamma}t)}{\sigma(t)} <K.$$
{\itshape Note:} If $(P_{\sigma,\gamma})$ holds for some $K>1$, then also $(P_{\sigma,\gamma'})$ is satisfied for all $\gamma'\le\gamma$ with the same $K$ and since $\sigma$ is nondecreasing  we might restrict ourselves to $\gamma>0$. \parb

Finally, we put
\begin{equation}\label{newindex2}
\gamma(\sigma):=\sup\{\gamma>0: (P_{\sigma,\gamma})\;\;\text{is satisfied}\}.
\end{equation}
If none condition $(P_{\sigma,\gamma})$ holds true, then  we put $\gamma(\sigma):=0$.

\begin{remark}\label{rema:Matuszewska.indices.welldef}
 We want to compare $\gamma(\sigma)$ with the Matuszewska indices introduced above. In the classical literature, $\a(\sigma)$ and $\b(\sigma)$  are defined only for positive functions. For $\sigma$ as above, by $\a(\sigma)$ and $\b(\sigma)$ we mean the corresponding indices of the restriction of $\sigma$ to some interval $[A,\oo)$, with $A>0$, where $\sigma(t)>0$, which is possible because $\sigma$ is nondecreasing with $\lim_{t\rightarrow\infty}\sigma(t)=\infty$ and, by Remark~\ref{rema.properties.Matuszewska.indices}, the indices do not depend on the restriction considered.
\end{remark}

\begin{lemma}\label{lemma.alpha.gamma} The relation $$\displaystyle \a(\sigma)=\frac{1}{\ga(\sigma)}$$ 
holds. Since $\ga(\sigma),\a(\sigma)\in[0,\oo]$, for the extreme values this means
that $\ga(\sigma)=\oo$ if and only if $\a(\sigma)=0$ and $\ga(\sigma)=0$ if and only if $\a(\sigma)=\oo$.
\end{lemma}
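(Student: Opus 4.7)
The plan is to translate the condition $(P_{\sigma,\gamma})$ into a statement about the quotient $\log(\sigma^{\up}(\lambda))/\log\lambda$ that appears in the representation of $\alpha(\sigma)$ given by Theorem~\ref{th:convenient.rep.alpha.beta}, and then to read off the equality of indices.

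First I would note that because $\sigma$ is nondecreasing with $\lim_{t\to\infty}\sigma(t)=\infty$, Lemma~\ref{lemma:beta.alpha.monotone.function}(i) gives $\beta(\sigma)\ge 0>-\infty$, so the hypothesis of Theorem~\ref{th:convenient.rep.alpha.beta} is met (regardless of whether $\alpha(\sigma)$ is finite) and yields
$$
\alpha(\sigma)=\inf_{\lambda>1}\frac{\log(\sigma^{\up}(\lambda))}{\log\lambda}\in[0,\infty].
$$
Monotonicity of $\sigma$ also forces $\sigma^{\up}(\lambda)\ge 1$ for $\lambda\ge 1$, so every quotient above is nonnegative. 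The main step is then the following dictionary: for fixed $\gamma>0$ and $K>1$, setting $\lambda:=K^{\gamma}$, the inequality $\limsup_{t\to\infty}\sigma(K^{\gamma}t)/\sigma(t)<K$ reads $\sigma^{\up}(\lambda)<\lambda^{1/\gamma}$, equivalently $\log(\sigma^{\up}(\lambda))/\log\lambda<1/\gamma$ (where the cases $\sigma^{\up}(\lambda)=+\infty$ and $\sigma^{\up}(\lambda)<+\infty$ are handled uniformly since $\log\lambda>0$). Since $K\mapsto K^{\gamma}$ is a bijection of $(1,\infty)$ onto itself, $(P_{\sigma,\gamma})$ is equivalent to the existence of some $\lambda>1$ with $\log(\sigma^{\up}(\lambda))/\log\lambda<1/\gamma$, which by the infimum formula above is in turn equivalent to $\alpha(\sigma)<1/\gamma$.

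Once this equivalence is in hand, the definition of $\gamma(\sigma)$ in~\eqref{newindex2} rewrites as $\gamma(\sigma)=\sup\{\gamma>0:\alpha(\sigma)<1/\gamma\}$, and a short case split on whether $\alpha(\sigma)$ equals $0$, lies in $(0,\infty)$, or equals $+\infty$ yields $\gamma(\sigma)=1/\alpha(\sigma)$ with the natural conventions $1/0:=\infty$ and $1/\infty:=0$. I do not anticipate any genuine obstacle here; the only care needed is to line up the extreme cases with the empty-supremum convention used in~\eqref{newindex2} and the conventions for $\inf/\sup$ from Remark~\ref{rema:inf.sup.empty}, which guarantee that the degenerate values $\alpha(\sigma)\in\{0,\infty\}$ correspond exactly to $\gamma(\sigma)\in\{\infty,0\}$ as claimed.
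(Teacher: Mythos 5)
Your proposal is correct and follows essentially the same route as the paper: both first invoke Lemma~\ref{lemma:beta.alpha.monotone.function}(i) to get $\beta(\sigma)\ge 0$ so that Theorem~\ref{th:convenient.rep.alpha.beta} applies, and then identify $(P_{\sigma,\gamma})$, via the substitution $\lambda=K^{\gamma}$, with the existence of $\lambda>1$ such that $\log(\sigma^{\up}(\lambda))/\log\lambda<1/\gamma$, from which the equality $\alpha(\sigma)=1/\gamma(\sigma)$ (with the stated conventions for the extreme values) follows by comparing the infimum representation of $\alpha(\sigma)$ with the supremum defining $\gamma(\sigma)$. Your explicit handling of the cases $\alpha(\sigma)\in\{0,\infty\}$ is a slightly more detailed bookkeeping of what the paper leaves implicit, but the argument is the same.
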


\begin{proof}
By Lemma~\ref{lemma:beta.alpha.monotone.function}.(i), $\beta(\sigma)\ge 0$, so Theorem~\ref{th:convenient.rep.alpha.beta} can be applied and write $$\alpha(\sigma)=\inf_{\lambda>1} \frac{\log(\sigma^{\on{up}}(\lambda))}{ \log \lambda}.$$ We  rewrite the definition of the index $\gamma(\sigma)$ defined in \eqref{newindex2} to obtain:
$$\gamma(\sigma)=\sup\{\gamma>0;\;\exists\;\Lambda> 1:\;\limsup_{t\rightarrow\infty}\frac{\sigma(\Lambda t)}{\sigma(t)}<\Lambda^{1/\gamma}\}= (\inf\{\tau>0;\;\exists\;\Lambda> 1:\; \sigma^{\up} (\Lambda) <\Lambda^{\tau}\})^{-1}.$$
Let now $\alpha>\alpha(\sigma)$, then there exists some $\lambda>1$ such that $(\log(\sigma^{\on{up}}(\lambda))/\log \lambda)<\alpha$ and consequently $(\gamma(\sigma))^{-1}\le\alpha$.
Conversely, let $\tau>(\gamma(\sigma))^{-1}$ be given, then there exists some $\Lambda> 1$ such that $(\log(\sigma^{\on{up}}(\Lambda))/\log \Lambda))<\tau$ and so $\alpha(\sigma)<\tau$ follows.
\end{proof}

From this connection and according to Remark~\ref{rema.properties.Matuszewska.indices} we deduce the following properties, most of them have been obtained in~\cite{JimenezSanzSchindlLaplace} directly from the definition:

\begin{itemize}
\item[$(i)$] For any $s>0$, $\ga(\sigma^s)=\ga(\sigma)/s$ and $\ga(\sigma_s)=\ga(\sigma)/s$ where $\sigma_s(t)=\sigma(t^s)$ and  $\sigma^s(t)= (\sigma(t))^s$.

\item[$(ii)$] Let $\omega,\sigma:[0,\oo)\to[0,\oo)$, nondecreasing with $\lim_{t\rightarrow\infty}\o(t)=\lim_{t\rightarrow\infty}\sigma(t)=\infty$ and $\sigma\hyperlink{sim}{\sim}\omega$ be given. Then $\sigma$ and $\o$ satisfy \eqref{eq:equiv.functionsORV} so $\gamma(\sigma)=\gamma(\omega)$. 
\end{itemize}

\subsection{Main theorems} S.~Aljan\v{c}i\'c and I.~D.~Arandjelovi\'c~\cite{AljAra77} give several equivalent representations of the indices $\a(f)$ and $\b(f)$ for O-regularly varying functions. In Theorems~\ref{th:main.th.alpha} and~\ref{th:main.th.beta}, this information is extended for monotone, but not necessarily O-regularly varying, functions. In Corollaries~\ref{coro:omsnq}, ~\ref{coro:om1} and~\ref{coro:om6} we deduce the relation between these indices and some of the classical conditions usually assumed for weight functions. 
The proof of the main results is based on  N.~K.~Bari, S.~B.~Stechkin \cite[Lemmas 2 and 3]{BariSteckin} and  R.~Meise, B.~A.~Taylor \cite[Prop. 1.3]{MeiseTaylor88}.\para

We will denote by $\N$ the set $\{1,2,3, \dots\}$, write $\N_0=\N\cup\{0\}$ and  $\lceil x \rceil:=\min\{ k\in\ZZ; x\leq k \}$ for $x\in\R$.

\begin{theorem}\label{th:main.th.alpha}
Let $\sigma:[0,\oo)\to[0,\oo)$, nondecreasing with $\lim_{t\rightarrow\infty}\sigma(t)=\infty$ and $\a>0$ be given. 
We take $a\geq 0$ such that $\sigma(x)>0$ for $x\ge a$. Then the following are equivalent:
\begin{itemize}
\item[(i)] there exists $C>0$ such that  $\displaystyle \int_1^{\infty}\frac{\sigma(y t)}{t^{1+\a}}dt\le C\sigma(y)+C$ for all $y>0$,
\item[(ii)] there exists a nondecreasing function $\kappa: [0,+\infty)\longrightarrow[0,+\infty)$ such that $\sigma {\hyperlink{sim}\sim}\kappa$, $\displaystyle \kappa(0)=\frac{\sigma(0)}{\a^2}$, $\kappa$ satisfies (i) and $\kappa(t^{1/\a})$ is concave, 
\item[(iii)] $ \displaystyle \lim_{\varepsilon\rightarrow 0}\limsup_{t\rightarrow+\infty}\frac{\varepsilon^\a \sigma(t)}{\sigma(\varepsilon t)}=0,$ 
\item[(iv)] there exists  $K>1$ such that $ \displaystyle \limsup_{t\rightarrow+\infty}\frac{\sigma(Kt)}{\sigma(t)}<K^\a,$
\item[(v)] $\ga(\sigma)>1/\a$,
\item[(vi)] $\a(\sigma)<\a$ (with the convention in Remark~\ref{rema:Matuszewska.indices.welldef}),
\item[(vii)] there exists $\ga\in(0,\a)$ such that $\displaystyle t \mapsto\frac{\sigma(t)}{t^\ga}$ is almost decreasing in $[a,\oo)$ if $a>0$, and in $[\ep,\oo)$ for any $\ep>0$ if $a=0$,
\item[(viii)]  there exists $C>0$ such that  $\displaystyle  \int_a^{y}\frac{t^{\a}}{\sigma(t)}\frac{dt}{t}\le \frac{Cy^{\a}}{\sigma(y)}$ for all $y\geq a$,
\item[(ix)]  there exists $C>0$ such that $\displaystyle \sum_{k=\lceil a \rceil+1}^p \frac{k^{\a-1}}{\sigma(k)}\leq C \frac{p^\a}{\sigma(p)}$ for all $p\in\N$ with $p\geq \lceil a \rceil +1 $, 
\item[(x)] for every $\theta\in(0,1)$ there exists $k\in\N$, $k\geq 2$, such that 
$\sigma (kp) \leq \theta k^\a  \sigma \left( p \right)$ for every $p\in\N$ with $p \geq  \lceil a \rceil +1  $,
\item[(xi)] there exists $k\in\N$, $k\geq 2$  such that $\displaystyle \limsup_{p\in\N,\, p\rightarrow\infty}\frac{\sigma(kp)}{\sigma(p)}< k^\a$,
\item[(xii)] there exists $C>0$ such that  
$\displaystyle \sum_{k=p}^\oo \frac{\sigma(k)}{k^{1+\a}}\leq C \frac{\sigma(p)}{p^\a}$ for every $p\in\N$  with $p\geq \lceil a \rceil $.

\end{itemize}

\end{theorem}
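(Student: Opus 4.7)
The plan is to organize the twelve conditions into three thematic groups and establish equivalences within each, linking the groups by a handful of key implications. \emph{Group A} consists of the Matuszewska-type conditions (iii), (iv), (v), (vi), (vii), (x), (xi), all quantitative rephrasings of $\a(\sigma)<\a$. The equivalences (v)$\Leftrightarrow$(vi)$\Leftrightarrow$(vii) come for free from Lemma~\ref{lemma.alpha.gamma} and Theorem~\ref{th:almostmonotonerep.alpha.beta}; (iv)$\Leftrightarrow$(v) is the definition of $\ga(\sigma)$ rephrased, exactly as in the proof of Lemma~\ref{lemma.alpha.gamma}. For (iii)$\Leftrightarrow$(iv), the nontrivial direction (iv)$\Rightarrow$(iii) uses submultiplicativity of $\sigma^{\on{up}}$: iterating $\sigma^{\on{up}}(K_0)<K_0^\a$ along $K_0^n$ forces $\sigma^{\on{up}}(K_0^n)/(K_0^n)^\a\to 0$, and monotonicity fills in intermediate $K$. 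Next, (vi)$\Rightarrow$(xi) by restricting the limsup in $\sigma^{\on{up}}(k)$ to integer $t$; (vii)$\Rightarrow$(x)$\Rightarrow$(xi) by choosing $k$ large enough that $Mk^\ga<\theta k^\a$, using $\ga<\a$. The delicate step within Group A is (xi)$\Rightarrow$(vii): iterating the discrete bound $\sigma(k_0p)\le\theta k_0^\a\sigma(p)$ gives $\sigma(k_0^np)\le k_0^{n\ga}\sigma(p)$ with $\ga:=\a+\log\theta/\log k_0<\a$; one then upgrades from integer to real $t$ via a $\lfloor\cdot\rfloor,\lceil\cdot\rceil$ sandwich that absorbs a bounded multiplicative factor $k_0^\ga$, yielding almost decrease of $\sigma(t)/t^\ga$.

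\emph{Group B} is the integral/sum family (i), (viii), (ix), (xii). I prove (vii)$\Leftrightarrow$(viii) directly: for (vii)$\Rightarrow$(viii), from $\sigma(y)/y^\ga\le M\sigma(t)/t^\ga$ on $a\le t\le y$ we have $t^{\a-1}/\sigma(t)\le M t^{\a-\ga-1}y^\ga/\sigma(y)$, and integrating over $[a,y]$ produces the bound with constant $M/(\a-\ga)$; for (viii)$\Rightarrow$(vii), a Gronwall argument with $F(y):=\int_a^y t^{\a-1}/\sigma(t)\,dt$ turns (viii) into $F(y)\le CyF'(y)$, so $(\log F)'\ge 1/(Cy)$ and $F(y)\ge cy^{1/C}$; feeding this back into (viii) gives $\sigma(y)\le C'y^{\a-1/C}$, whence $\sigma(t)/t^{\a-1/C}$ is almost decreasing with $\a-1/C<\a$. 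The change of variables $u=yt$ transforms (i) into $y^\a\int_y^\infty\sigma(u)u^{-1-\a}du\le C\sigma(y)+C$, whose discrete counterpart is (xii); both (i)$\Leftrightarrow$(xii) and (viii)$\Leftrightarrow$(ix) are routine integral--sum comparisons using monotonicity of $\sigma$.

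\emph{Group C} is the regularization (ii), for which I use the concrete construction
\[
 \kappa(y):=\frac{y^\a}{\a}\int_y^{\infty}\frac{\sigma(u)}{u^{1+\a}}\,du,
\]
well defined under (i). Monotonicity of $\sigma$ gives the lower bound $\kappa(y)\ge(y^\a/\a)\sigma(y)\int_y^\infty u^{-1-\a}du=\sigma(y)/\a^2$, while the transformed (i) gives $\kappa(y)\le C'\sigma(y)+C'$, so $\kappa\hyperlink{sim}{\sim}\sigma$. Setting $t=y^\a$ and $f(s):=\sigma(s^{1/\a})$, one has $\phi(t):=\kappa(t^{1/\a})=(t/\a^2)\int_t^\infty f(s)/s^2\,ds$, and Stieltjes integration by parts---valid because (i) forces $\sigma(t)=o(t^\a)$ and hence $f(s)/s\to 0$---gives $\phi'(t)=(1/\a^2)\int_t^\infty s^{-1}df(s)$, nonincreasing in $t$ since $df\ge 0$; hence $\phi$ is concave. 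The boundary value $\kappa(0)=\sigma(0)/\a^2$ reads off the dominant behaviour of the integral near $0$, and $\kappa$ is nondecreasing because $\kappa'(y)=(\a^2\kappa(y)-\sigma(y))/(\a y)\ge 0$ by the lower bound. That $\kappa$ itself satisfies (i) follows from the stability of (i) under $\hyperlink{sim}{\sim}$, and the converse (ii)$\Rightarrow$(i) is immediate by the same stability since $\sigma\hyperlink{sim}{\sim}\kappa$.

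\emph{Main obstacle.} The most technically delicate steps are (xi)$\Rightarrow$(vii), where $\sigma(p+1)/\sigma(p)$ need not be bounded so the iterated geometric bound along $k_0^np$ has to be exploited (a one-step comparison fails), and the concavification in (ii), where the concavity of $\kappa(t^{1/\a})$, the exact boundary value $\sigma(0)/\a^2$, and the inheritance of (i) by $\kappa$ must all be verified simultaneously from a merely monotone (not differentiable) input via Stieltjes calculus.
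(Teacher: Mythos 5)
Your proposal, as it stands, does not prove the theorem: the implication graph you describe is disconnected. All the arrows you list stay within two separate clusters. One cluster is $\{$(iii),(iv),(v),(vi),(vii),(x),(xi)$\}$ together with (viii),(ix) attached through (vii)$\Leftrightarrow$(viii)$\Leftrightarrow$(ix); the other is $\{$(i),(ii),(xii)$\}$, linked only by (i)$\Leftrightarrow$(xii) and (i)$\Leftrightarrow$(ii). Nowhere do you offer an implication joining the two clusters --- nothing like (vii)$\Rightarrow$(i), (xi)$\Rightarrow$(xii), or an analogue of the paper's bridges (ii)$\Rightarrow$(iii) (which exploits the concavity of $\kappa(t^{1/\a})$ to produce the averaging estimate) and (xi)$\Rightarrow$(xii)$\Rightarrow$(i). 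Such a bridge is genuinely needed and is not a formality; for instance (vii)$\Rightarrow$(i) is easy (if $\sigma(t)/t^{\ga}$ is almost decreasing with $\ga<\a$, then $\sigma(yt)\le Mt^{\ga}\sigma(y)$ for $t\ge1$ and large $y$, so $\int_1^{\infty}\sigma(yt)t^{-1-\a}dt\le M\sigma(y)/(\a-\ga)$, with the usual constant adjustment for small $y$), but some arrow of this kind must be supplied, or else you have only shown that each cluster is internally equivalent.

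There is also a flaw inside your Group B. In (viii)$\Rightarrow$(vii) you derive $F(y)\ge c\,y^{1/C}$ and then the pointwise envelope $\sigma(y)\le C'y^{\a-1/C}$, and conclude ``whence $\sigma(t)/t^{\a-1/C}$ is almost decreasing.'' That last inference is a non sequitur: a pointwise bound $\sigma(y)\le C'y^{\a-1/C}$ only controls the upper order $\ro(\sigma)$, and upper-order control does not yield the almost-decrease (upper Matuszewska) property --- precisely the non-reversibility discussed in Remark~\ref{rema:om2om5omnq}. The step can be repaired from the same ingredients: (viii) reads $F(y)\le CyF'(y)$, so $F(y)/y^{1/C}$ is nondecreasing; moreover $F(y)\asymp y^{\a}/\sigma(y)$ for large $y$ (the upper bound is (viii) itself, the lower bound comes from $\int_{y/2}^{y}t^{\a-1}/\sigma(t)\,dt\ge c_{\a}\,y^{\a}/\sigma(y)$ using that $\sigma$ is nondecreasing), whence $y^{\a-1/C}/\sigma(y)$ is almost increasing, which is (vii). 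Apart from these two points, your alternative routes are sound and in places cleaner than the paper's: (iv)$\Rightarrow$(iii) via submultiplicativity of $\sigma^{\up}$ avoids the detour through the concave regularization, the iteration-plus-sandwich argument for (xi)$\Rightarrow$(vii) is correct, and your single-step construction $\kappa(y)=(y^{\a}/\a)\int_y^{\infty}\sigma(u)u^{-1-\a}\,du$ with the Stieltjes computation does deliver (i)$\Rightarrow$(ii) without the paper's second iteration, since (i) indeed forces $\sigma(t)=o(t^{\a})$.
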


\begin{proof}

(i) $\Rightarrow$ (ii)    For all $y>0$ we define
   \begin{equation*}
\kappa_0 (y) =\xi_{\sigma}(y):=\int_1^{\infty}\frac{\sigma(yt)}{t^{1+\a}}dt= y^\a \int_y^{\infty}\frac{\sigma(s)}{s^{1+\a}}ds.
\end{equation*}
Using that $\sigma$ is nondecreasing, we observe that $\kappa_{0}$ is continuous nondecreasing in  $[0,+\oo)$, $\kappa_{0}(0)=\sigma(0)/\a$ and $\kappa_{0} (y)\geq \sigma(y)/\a$ for all $y\geq 0$. By (i), we see that $\kappa_{0} (y)\leq C \sigma(y) + C$ for all $y>0$, so $\sigma{\hyperlink{sim}\sim}\kappa_{0}$, then one can check that $\kappa_{0}$ also satisfies (i) for some constant $C'$. \par

Iterating the procedure, we construct $\kappa (y):=\xi_{\kappa_0}(y)\in \mathcal{C}^{1} (0,\oo)$, $\kappa$ is nondecreasing, $\kappa(0)=\sigma(0)/\a^2$, $\kappa\sim \kappa_0\sim \sigma$, $\kappa$ satisfies (i). Finally, for all $y>0$ we see that
$$(\kappa(y^{1/\a}) )'=\left(  y \int_{y^{1/\a}}^{\infty}\frac{\kappa_0(s)}{s^{1+\a}}ds \right)'= \int_{y^{1/\a}}^{\infty}\frac{\kappa_0(s)}{s^{1+\a}}ds -\frac{\kappa_0(y^{1/\a})}{\a y}=  \int_{y^{1/\a}}^{\infty}\frac{\kappa_0(s)- \kappa_0(y^{1/\a})}{s^{1+\a}}ds.$$
Since $\kappa_0(y^{1/\a})$ is nondecreasing we conclude that $(\kappa(y^{1/\a}))'$ is nonincreasing, so $\kappa(y^{1/\a})$ is concave.\para

(ii) $\Rightarrow$ (iii)  We will show that $\kappa$ satisfies (iii), 
and we conclude using that $\kappa\sim\sigma$. Since $\kappa(t^{1/\a})$ is concave and $\kappa(0)\geq0$, $\kappa((2t)^{1/\a}) \leq \kappa((2t)^{1/\a})+\kappa(0)\leq 2 \kappa (t^{1/\a})$  for all $t\ge 0$, so we put $A:=2^{1/\a}>1$ and we see that $\kappa(Ay)\leq A^\a \kappa(y)$ for all $t\geq 0$, or more generally, $A^{-n \a } \kappa(A^{n} y)\leq A^{-j \a } \kappa(A^{j} y)$ for all $y\geq 0$ and every $j\in\N$ with $j\leq n$. Using that  $\kappa$  satisfies (i) and that $\kappa$ and $t^{\a}$ are nondecreasing, we see that
\begin{align*}
 n \frac{\kappa(A^n y)}{A^{n \a} }\leq& \sum_{j=1}^n \frac{\kappa(A^j y)}{A^{j \a} } = \frac{A^{\a}}{\log A} \sum_{j=1}^n \frac{\kappa(A^{j}y)}{A^{(j+1)\a}} \int^{A^{j+1}}_{A^{j}}\frac{dt}{t} \leq \frac{A^{\a}}{\log A} \sum_{j=1}^n \int^{A^{j+1}}_{A^{j}} \frac{\kappa (yt)}{t^{1+\a}} dt\\
 \leq& \frac{A^{\a}}{\log A} \int_{1}^{\oo} \frac{\kappa (yt)}{t^{1+\a}} dt \leq B \kappa (y),
\end{align*}
for $y\geq y_0$ large enough for some suitable constant $B>0$. Hence for any $\ep\in (0, 1/A]$ there exists $n$ such that 
$\ep \in (1/A^{n+1},1/A^{n}]$, so for $s\geq A^{n+1} y_0$ we observe that
$$\frac{\ep^\a \kappa (s)}{\kappa (\ep s) } \leq \frac{A^{-n\a} \kappa(s)}{\kappa (A^{-(n+1)}s) }\leq \frac{A^\a B}{n+1},$$
which proves that $\kappa$ satisfies (iii). \para

(iii) $\Rightarrow$ (iv) Immediate.\para
(iv) $\Rightarrow$ (v) Immediate.\para
(v) $\Rightarrow$ (vi) Deduced from Lemma~\ref{lemma.alpha.gamma}.\para
(vi) $\Rightarrow$ (vii) By Theorem~\ref{th:almostmonotonerep.alpha.beta} taking $\a(\sigma)<\ga<\a$. \para
(vii) $\Rightarrow$ (viii) By (vii) and suitably enlarging the constant when $a=0$, we see that  $\displaystyle t \mapsto t^\ga /\sigma(t)$ is almost increasing in $[a,\oo)$. Hence for all $y\ge a$
 $$ y^{-\a} \int_a^{y}\frac{t^{\a}}{\sigma(t)}\frac{dt}{t} = y^{-\a} \int_a^{y}\frac{t^{\a-\ga} t^\ga}{\sigma(t)}\frac{dt}{t} \le M \frac{y^{\ga-\a}}{\sigma (y)} \int_0^{y} t^{\a-\ga} \frac{dt}{t} =  \frac{M}{(\a-\ga)\sigma(y)}. $$
(viii) $\Rightarrow$ (ix)  Since  $\sigma$ is nondecreasing, for every $k\in\N_0=\N\cup\{0\}$ with $k\geq a$ we see that 
$$\int_{k}^{k+1} \frac{t^\a}{\sigma(t)} \frac{dt}{t} \ge \frac{1}{\sigma(k+1)} \frac{(k+1)^\a-k^\a}{\a} = \frac{(k+1)^{\a-1}}{\a \sigma(k+1)} (k+1) \left(1-\left(\frac{k}{k+1}\right)^\a\right).  $$
We write $a_k= (k+1) (1-(k/k+1)^\a)$ for all $k\in\N_0$ and we see that $(a_k)_{k\in\N_0}$ is a sequence of positive real numbers
with $\lim_{k\to\oo} a_k =\a$, so we fix $0<D<\min_{k\in\N_0}(a_k)$. Hence by (viii) for $y=p\geq \lceil a \rceil +1$, we observe that
$$C \frac{p^\a}{\sigma(p)}\ge \int_{a}^p \frac{t^{\a}}{\sigma(t)}\frac{dt}{t}  \geq \sum_{k=\lceil a \rceil}^{p-1}  \int_{k}^{k+1} \frac{t^{\a}}{\sigma(t)}\frac{dt}{t} \geq \frac{D}{\a} \sum_{k=\lceil a \rceil}^{p-1} \frac{(k+1)^{\a-1}}{ \sigma(k+1)}. $$\para

(ix) $\Rightarrow$ (x)  For simplicity we write $k_0:=\lceil a \rceil+1$. First we show that the sequence $(p^\a/\sigma(p))^\oo_{p=k_0 }$ is almost increasing. By (ix) and the monotonicity of $\sigma$, for any $q\geq p\geq k_0+3$ 
we observe that 
\begin{equation*}\label{eq:almost.incresing.intermidiate.step}
 C \frac{ q^\a}{\sigma(q)}\geq \sum_{k=k_0}^{q} \frac{k^{\a-1}}{\sigma(k)}\geq  \sum_{k=k_0}^{p} \frac{k^{\a-1}}{\sigma(k)} \geq \frac{1}{\sigma(p)} \int_{k_0+1}^{p-1}t^{\a}\frac{dt}{t} \geq \frac{p^\a}{\a \sigma(p)} \left(\frac{p-1}{ p}\right)^\a \left(1- \frac{(k_0+1)^\a}{ (k_0+2)^\a}\right).  
\end{equation*}
 Since $(p-1)/p\geq 1/2$, we deduce that $(p^\a/\sigma(p))^\oo_{p=k_0+3 }$ is almost increasing. 
We conclude that the same holds for $(p^\a/\sigma(p))^\oo_{p=k_0}$ by suitably enlarging the almost monotonicity constant. Now, for every $m\geq k_0$, and every $k\in\N$, $k\geq 2$, we apply (ix) for $p=km$ and the almost monotonicity of   $(p^\a/\sigma(p))^\oo_{p=k_0 }$ and we see that
$$C \frac{ (km)^\a}{\sigma(km)}\geq \sum_{j= k_0}^{km} \frac{j^{\a-1}}{\sigma(j)}\geq  \sum_{j=m}^{km} \frac{j^{\a-1}}{\sigma(j)} \geq B  \frac{m^{\a}}{\sigma(m)} \int_{m}^{km} \frac{dt}{t} = B  \frac{m^{\a}}{\sigma(m)} \log(k).   $$
Given $\theta \in (0,1)$ we take $k\in\N$, $k\geq  \max(2,e^{C/(B\theta)} )$  and (x) holds. \para

(x) $\Rightarrow$ (xi) Immediate.\para

(xi) $\Rightarrow$ (xii) 
By (xi), there exist $\theta\in(0,1)$ and $p_0\in\N$  such that $\sigma (k^{s} p)\leq ( \theta k^\a)^s \sigma(p)$ for all $p\geq p_0$ and every $s\in\N_0$. We prove (xii) for $p\geq p_0$, then we conclude by suitably enlarging the constant. By the monotonicity of $\sigma$, for  all $p\geq p_0$ 
 $$\sum_{\ell=p+1}^\oo \frac{\sigma(\ell)}{\ell^{1+\a}} =\sum_{s=0}^\oo \sum_{\ell=k^s p+1}^{k^{s+1} p}\frac{\sigma(\ell)}{\ell^{1+\a}} \le
 \sum_{s=0}^\oo \sigma (k^{s+1}p) \int_{k^{s}p}^{k^{s+1}p} \frac{dt}{t^{1+\a}}.   $$
 Then 
 $$\sum_{\ell=p+1}^\oo \frac{\sigma(\ell)}{\ell^{1+\a}}  \le
 \sigma(p) \sum_{s=0}^\oo \frac{\theta^{s+1}k^{(s+1)\a}}{(k^{s} p)^\a} \int_{k^{s}p}^{k^{s+1}p} \frac{dt}{t} \le  \theta \log( k)  k^\a \frac{\sigma(p)}{p^\a} \sum_{s=0}^\oo \theta^{s} = \frac{\theta \log( k)  k^\a }{1-\theta}\frac{\sigma(p)}{p^\a}.   $$
Consequently, for all $p\geq p_0$ 
$$\sum_{\ell=p}^\oo \frac{\sigma(\ell)}{\ell^{1+\a}} \leq \left(\frac{\theta \log( k)  k^\a }{1-\theta} +1\right)\frac{\sigma(p)}{p^\a}.   $$
By suitably enlarging the constant we see that (xii) holds for all $p\in\N$  with $p\geq \lceil a \rceil $.
\para 

(xii) $\Rightarrow$ (i)
For all $y\geq 1$
by the monotonicity of $\sigma $
 $$\int_y^{\infty}\frac{\sigma(t)}{t^{1+\a}}dt \leq \sum_{k=\lfloor y \rfloor}^\oo \int_k^{k+1} \frac{\sigma(t)}{t^{1+\a}}dt \leq  \sum_{k=\lfloor y \rfloor}^\oo \frac{\sigma (k+1)}{(k+1)^{1+\a}} \left(\frac{k+1}{k}\right)^{1+\a} \leq  2^{1+\a} \sum_{k=\lfloor y \rfloor}^\oo \frac{\sigma (k)}{k^{1+\a}}. $$
Consequently, for all $y\geq y_0:=\max(1,\lceil a \rceil)$ applying (xii) and the monotonicity of $\sigma$
 $$\int_y^{\infty}\frac{\sigma(t)}{t^{1+\a}}dt \leq 2^{1+\a}  C \frac{\sigma(\lfloor y \rfloor)}{\lfloor y \rfloor^\a}   \leq  4^{1+\a} C \frac{\sigma( y )}{ y ^\a}.    $$
Then, (i) holds for $y\geq y_0$.   By suitably modifying the constant 
it also holds for $y\in(0,y_0)$, because
$$y^{\a} \int_y^{\infty}\frac{\sigma(t)}{t^{1+\a}}dt =  y^{\a} \left( \int_{y}^{y_0}\frac{\sigma(t)}{t^{1+\a}}dt + \int_{y_0}^{\infty}\frac{\sigma(t)}{t^{1+\a}}dt\right) \leq \frac{\sigma(y_0)}{\a} + 4^{1+\a} C \sigma( y_0 ) .$$


\end{proof}

\begin{remark}\label{remark:alpha.stability.open}
 The value of $\a$ is stable for $\hyperlink{sim}{\sim}$ because the equivalence implies \eqref{eq:equiv.functionsORV}  for nondecreasing functions tending to infinity at infinity, so all the conditions above are stable under equivalence for nondecreasing functions tending to infinity. 
 \para
 
 The list of equivalent definitions of $\a(\sigma)$ given in \cite{AljAra77}  can be increased, we write
 $$\a(\sigma)=\inf\{\a>0;\, \text{any of the conditions in Theorem~\ref{th:main.th.alpha} holds for}\,\, \sigma\},$$
 if the previous set is empty we write $\a(\sigma)=0$. It is worthy to notice that if any of the statements in the previous theorem holds for $\a$ it is also valid for $\a-\ep$ for some small enough $\ep>0$. Hence the set of values satisfying any these conditions is  left-open.\para
 
\end{remark}

%
%
%
%
%
%
%

Since for $\a=1$ condition (i) in the previous theorem is  \hyperlink{omsnq}{$(\omega_{\on{snq}})$} condition, we obtain the following corollary.

\begin{corollary}\label{coro:omsnq}
Let $\sigma$ be as above. The following are equivalent:
\begin{itemize}
\item[(i)] $\sigma$ satisfies \hyperlink{omsnq}{$(\omega_{\on{snq}})$},
\item[(ii)] $\sigma$ satisfies every/some of the equivalent conditions (ii)--(xii) in Theorem~\ref{th:main.th.alpha} for $\a=1$.
\end{itemize}
In particular,  $\sigma$ satisfies \hyperlink{omsnq}{$(\omega_{\on{snq}})$} if and only if $\ga(\sigma)>1$  if and only if $\a(\sigma)<1$.
\end{corollary}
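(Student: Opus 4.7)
The plan is very short: this corollary is an immediate specialization of Theorem~\ref{th:main.th.alpha} to the exponent $\alpha=1$. First I would observe that condition \hyperlink{omsnq}{$(\omega_{\on{snq}})$} reads
\[
\exists\, C\geq 1:\ \forall y>0,\quad \int_1^{\infty}\frac{\sigma(yt)}{t^2}\,dt\le C\sigma(y)+C,
\]
which is \emph{literally} condition (i) of Theorem~\ref{th:main.th.alpha} in the case $\alpha=1$, since there the integrand is $\sigma(yt)/t^{1+\alpha}=\sigma(yt)/t^2$ and the right-hand side has the same form (any $C>0$ can be replaced by some $C\geq 1$ without changing the class of functions satisfying it).

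Hence the equivalence of (i) and (ii) in the corollary is nothing but the equivalence of statements (i) and (ii)--(xii) in Theorem~\ref{th:main.th.alpha} specialized to $\alpha=1$. For the ``in particular'' clause, I would just read off:
\begin{itemize}
\item Statement (v) of Theorem~\ref{th:main.th.alpha} with $\alpha=1$ becomes $\gamma(\sigma)>1/1=1$;
\item Statement (vi) of Theorem~\ref{th:main.th.alpha} with $\alpha=1$ becomes $\alpha(\sigma)<1$.
\end{itemize}
Thus \hyperlink{omsnq}{$(\omega_{\on{snq}})$} is equivalent to $\gamma(\sigma)>1$ and equivalent to $\alpha(\sigma)<1$.

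There is no real obstacle here: all the work has been done in Theorem~\ref{th:main.th.alpha}. The only thing to verify explicitly is that the constants ``$C\ge 1$'' in \hyperlink{omsnq}{$(\omega_{\on{snq}})$} and ``$C>0$'' in condition (i) of Theorem~\ref{th:main.th.alpha} give the same class, which is trivial since one may always enlarge $C$. The proof is therefore one sentence: \emph{``Apply Theorem~\ref{th:main.th.alpha} with $\alpha=1$, observing that condition (i) of that theorem coincides with \hyperlink{omsnq}{$(\omega_{\on{snq}})$}; the last assertion follows from the equivalence (i)$\Leftrightarrow$(v)$\Leftrightarrow$(vi).''}
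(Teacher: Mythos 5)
Your proposal is correct and coincides with the paper's own argument: the paper derives Corollary~\ref{coro:omsnq} precisely by remarking that condition (i) of Theorem~\ref{th:main.th.alpha} with $\a=1$ is exactly \hyperlink{omsnq}{$(\omega_{\on{snq}})$}, and the last assertion is read off from the equivalences with (v) and (vi) just as you do.
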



Similarly, we can translate condition \hyperlink{om1}{$(\omega_{1})$} in terms of the index $\a$. 
The following corollary can be seen as a restatement of a well-known result of W. Feller (e.g. see \cite[Coro. 2.0.6]{BingGoldTeug89}).

\begin{corollary}\label{coro:om1}
Let $\sigma$ be as above. The following are equivalent:
\begin{itemize}
\item[(i)] $\sigma$ satisfies \hyperlink{om1}{$(\omega_{1})$},
\item[(ii)] There exists $\a>0$ such that $\sigma$ satisfies every/some of the equivalent conditions (i)--(xii) in Theorem~\ref{th:main.th.alpha}.
\end{itemize}
In particular, $\sigma$ satisfies {\hyperlink{om1}{$(\omega_{1})$}} if and only if $\ga(\sigma)>0$  if and only if $\a(\sigma)<\oo$.
\end{corollary}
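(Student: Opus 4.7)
The plan is to use condition (iv) of Theorem~\ref{th:main.th.alpha} as the bridge between $(\o_1)$ and the remaining conditions. Morally, $(\o_1)$ is simply the qualitative form of (iv) in which the power $\a$ is left unspecified, so the corollary amounts to the observation that $(\o_1)$ holds if and only if (iv) holds for \emph{some} $\a>0$.

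First I would establish (i) $\Rightarrow$ (ii). Assuming $(\o_1)$, there exist $L\ge 1$ and $t_0\ge 0$ such that $\sigma(2t)\le L\sigma(t)$ for every $t\ge t_0$. I would then pick any $\a>0$ with $2^\a>L$ (for instance, any $\a>\log_2 L$), so that
$$\limsup_{t\to\oo}\frac{\sigma(2t)}{\sigma(t)}\le L<2^\a,$$
which is precisely condition (iv) of Theorem~\ref{th:main.th.alpha} with $K=2$. Since the twelve conditions of that theorem are equivalent, (ii) follows.

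For the converse (ii) $\Rightarrow$ (i), I would again use condition (iv) as the entry point: there exist $K>1$, $\a>0$, $C>0$ and $t_0\ge 0$ with $\sigma(Kt)\le C\sigma(t)$ for all $t\ge t_0$. If $K\ge 2$, monotonicity of $\sigma$ yields $\sigma(2t)\le\sigma(Kt)\le C\sigma(t)$ directly. If $K<2$, I would choose $n\in\N$ with $K^n\ge 2$ and iterate the inequality (which is legitimate because $t\ge t_0$ implies $K^j t\ge t_0$ for every $j\ge 0$) to obtain $\sigma(K^n t)\le C^n\sigma(t)$, after which monotonicity gives $\sigma(2t)\le C^n\sigma(t)$, i.e.\ $(\o_1)$.

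The ``in particular'' statement follows by combining the main equivalence with the equivalences (iv)$\iff$(v)$\iff$(vi) in Theorem~\ref{th:main.th.alpha} and with Lemma~\ref{lemma.alpha.gamma}: $(\o_1)$ holds iff (vi) holds for some $\a>0$, i.e.\ iff $\a(\sigma)<\oo$, and by Lemma~\ref{lemma.alpha.gamma} this is equivalent to $\ga(\sigma)>0$. There is no serious obstacle here; the only point to keep track of is that conditions (i)--(xii) of Theorem~\ref{th:main.th.alpha} depend on the parameter $\a$, while $(\o_1)$ does not, so the existential quantifier on $\a$ in statement (ii) of the corollary is where the work really is absorbed.
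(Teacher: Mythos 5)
Your proposal is correct and follows essentially the same route as the paper: both directions pass through condition (iv) of Theorem~\ref{th:main.th.alpha}, taking $K=2$ for (i)~$\Rightarrow$~(ii) and, for the converse with $1<K<2$, iterating the estimate $n$ times (you at the level of an inequality valid for $t\ge t_0$, the paper at the level of limsups) before invoking monotonicity. The deduction of the ``in particular'' part via (v), (vi) and Lemma~\ref{lemma.alpha.gamma} also matches the paper's reasoning.
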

\begin{proof}
 It is immediate to check that, if $\sigma$ satisfies \hyperlink{om1}{$(\omega_{1})$}, then it exists $\a>0$ such that Theorem~\ref{th:main.th.alpha}.(iv) holds for $K=2$. If Theorem~\ref{th:main.th.alpha}.(iv) holds for some $K\geq 2$, then $\sigma$ satisfies \hyperlink{om1}{$(\omega_{1})$} by monotonicity. If Theorem~\ref{th:main.th.alpha}.(iv) holds for some $1<K<2$,  we fix $n\in\N$ such that $2\leq K^n$, by monotonicity, we observe that 
 $$\limsup_{t\to\oo} \frac{\sigma(2t)}{\sigma (t)} \leq \limsup_{t\to\oo} \frac{\sigma(K^n t)}{\sigma (t)} \leq
 \limsup_{t\to\oo} \frac{\sigma(K^n t)}{\sigma (K^{n-1}t)} \cdots  \frac{\sigma(Kt)}{\sigma (t)} < K^{n\a}.  $$
 Hence $\sigma$ satisfies \hyperlink{om1}{$(\omega_{1})$}. 
\end{proof}

Since $\sigma$ is nondecreasing $\b(\sigma)\geq 0$ and so, according to Theorem~\ref{th:Ch.ORVF},  $\sigma\in ORV$ if and only if $\a(\sigma)<\oo$  if and only if $\gamma(\sigma)>0$  if and only if $\sigma$ satisfies {\hyperlink{om1}{$(\omega_{1})$}}.\parb

\begin{remark}\label{rema:om2om5omnq}
 Conditions \hyperlink{om2}{$(\omega_2)$}, \hyperlink{om5}{$(\omega_5)$} and {\hyperlink{omnq}{$(\omega_{\text{nq}})$}} are  instead connected to the order $\ro(\sigma)$. For $\sigma$ as above, thanks to the relation between $\a(\sigma)$ and $\ro(\sigma)$ (see Remark~\ref{rema:ordersROMU}) we see that each assertion implies the following:
\begin{enumerate}[(i)]
 \item $\a(\sigma)<1$,
 \item $\ro(\sigma)<1$,
 \item there exists $\alpha\in(0,1)$ such that $\omega(t)=O(t^{\alpha})$ as $t\rightarrow\infty$,
 \item $\sigma$ satisfies \hyperlink{omnq}{$(\omega_{\text{nq}})$},
 \item $\sigma$ satisfies \hyperlink{om5}{$(\omega_{5})$},
 \item $\sigma$ satisfies \hyperlink{om2}{$(\omega_{2})$},
 \item $\ro(\sigma)\leq 1$,
\end{enumerate}
and only the implication (ii) $\Rightarrow$ (iii) can be reversed. Hence if $\sigma$ satisfies \hyperlink{omsnq}{$(\omega_{\text{snq}})$}, applying Corollary~\ref{coro:omsnq}, we see that $\sigma$ satisfies the conditions (i)-(vii) and, by Corollary~\ref{coro:om1}, $\sigma$ has also \hyperlink{om1}{$(\omega_1)$}. Part of this information was well-known but dispersed, see \cite[Coro. 1.4]{MeiseTaylor88} and the main novelty is its connection to O-regular variation.\para
\end{remark}

Finally, the growth condition \hyperlink{om6}{$(\omega_6)$} is associated with the lower Matuszewska index $\b(\sigma)$ and it is possible to establish a result analogous to Theorem~\ref{th:main.th.alpha}. 

\begin{theorem}\label{th:main.th.beta}
Let $\sigma$ be as above and $\b\geq 0$. We take $a\geq 0$ such that $\sigma(x)>0$ for all $x\ge a$ and  $\sigma(x)=0$ for every $x\leq\lceil a \rceil-1$. The following are equivalent:
\begin{itemize}
\item[(i)]  there exists $C>0$ such that  $\displaystyle  \int_1^{y}\frac{\sigma(t) }{t^{\b+1}}dt\le C \frac{\sigma(y)}{y^\b}$ for all $y\geq 1$,

\item[(ii)] $\displaystyle \lim_{k\rightarrow \oo}\liminf_{t\rightarrow\infty}\frac{\sigma(k t)}{k^\b \sigma( t)}=\oo$, 
\item[(iii)] there exists $K>1$  such that $\displaystyle \liminf_{t\rightarrow\infty}\frac{\sigma(Kt)}{\sigma(t)}> K^\b$,
\item[(iv)] $\b(\sigma)>\b$  (with the convention in Remark~\ref{rema:Matuszewska.indices.welldef}),
\item[(v)] there exists $\ga>\b$ such that $\displaystyle t\mapsto \frac{\sigma(t)}{t^\ga}$ is almost increasing  in $[a,\oo)$ if $a>0$ and in $[\ep,\oo)$ for all $\ep>0$ if $a=0$,
\item[(vi)] there exists $C>0$ such that $\displaystyle\frac{1}{y^{\b}} \int_{y}^\oo \frac{t^{\b-1}}{\sigma(t) }dt\le \frac{C}{\sigma(y)}$  for all  $y\geq  a$ if $a>0$ and for all $y\geq \ep$ if $a=0$ where $\ep>0$ is arbitrary but fixed and $C$ depends on $\ep$,
\item[(vii)]there exists $C>0$ such that  
$\displaystyle \sum_{k=p}^\oo \frac{k^{\b-1}}{\sigma(k)}\leq C \frac{ p^{\b}}{\sigma(p)}$ for every $p\in\N$ with $p\geq a$,
\item[(viii)] for every $\theta\in(0,1)$ there exists $k\in\N$, $k\geq 2$, such that 
$\sigma (p) \leq \theta k^{-\b}  \sigma \left(k p \right)$ for every $p\in\N$,
\item[(ix)] there exists $k\in\N$, $k\geq 2$,  such that $\displaystyle \liminf_{p\in\N,\, p\rightarrow\infty}\frac{\sigma(kp)}{\sigma(p)}> k^\b$,
\item[(x)] there exists $C>0$ such that  
$\displaystyle \sum_{k=1}^p \frac{\sigma(k)}{k^{1+\b}}\leq C \frac{\sigma(p)}{p^\b}$ for every $p\in\N$. 
\end{itemize}

\end{theorem}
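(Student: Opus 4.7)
My plan is to mirror the cyclic proof of Theorem~\ref{th:main.th.alpha}, but with all orientations reversed because $\beta$ is a \emph{lower} Matuszewska index: tail sums/integrals on $[y,\infty)$ replace partial ones on $[a,y]$, and ``almost increasing'' takes the place of ``almost decreasing''. I would establish the cycle
$\text{(i)} \Rightarrow \text{(ii)} \Rightarrow \text{(iii)} \Rightarrow \text{(iv)} \Leftrightarrow \text{(v)} \Rightarrow \text{(vi)} \Rightarrow \text{(vii)} \Rightarrow \text{(viii)} \Rightarrow \text{(ix)} \Rightarrow \text{(x)} \Rightarrow \text{(i)}$.

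Several steps are direct analogues of their counterparts in Theorem~\ref{th:main.th.alpha}. The equivalence (iv) $\Leftrightarrow$ (v) is Theorem~\ref{th:almostmonotonerep.alpha.beta}, while (iii) $\Rightarrow$ (iv) follows from the representation in Theorem~\ref{th:convenient.rep.alpha.beta} (applicable because Lemma~\ref{lemma:beta.alpha.monotone.function}(i) gives $\beta(\sigma) \geq 0$): $\liminf_t \sigma(Kt)/\sigma(t) > K^\beta$ reads $\sigma_{\on{low}}(K) > K^\beta$, so $\beta(\sigma) \geq \log(\sigma_{\on{low}}(K))/\log K > \beta$. The implications (ii) $\Rightarrow$ (iii) and (viii) $\Rightarrow$ (ix) are immediate. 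For (v) $\Rightarrow$ (vi), if $\sigma(t)/t^\gamma$ is almost increasing on $[a,\infty)$ with $\gamma > \beta$, the bound $1/\sigma(t) \leq My^\gamma/(\sigma(y)t^\gamma)$ for $t \geq y$ reduces the integral in (vi) to a convergent integral of $t^{\beta-\gamma-1}$. The passage (vi) $\Rightarrow$ (vii) is the standard conversion of $\int_y^\infty t^{\beta-1}/\sigma(t)\,dt$ to a tail sum via monotonicity of $\sigma$, and (x) $\Rightarrow$ (i) is the analogous conversion in reverse. For (vii) $\Rightarrow$ (viii) I would first deduce that $p^\beta/\sigma(p)$ is almost decreasing (bound $\sum_{k=p}^{2p}k^{\beta-1}/\sigma(k) \geq c(2p)^\beta/\sigma(2p)$ by monotonicity and compare with (vii)); then apply (vii) at $p=m$, split the sum over $[m,km]$, and use this almost-monotonicity to extract a factor $\log k$ which, for $k$ large, yields (viii). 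The implication (ix) $\Rightarrow$ (x) comes from iterating (ix) upward to produce $\sigma(k^s p_0) \geq (k^\beta/\theta)^s\sigma(p_0)$ and decomposing $\sum_{\ell=1}^p \sigma(\ell)/\ell^{1+\beta}$ into the geometric blocks $[k^{-s-1}p, k^{-s}p]$, whose contributions form a summable geometric series.

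The main obstacle, as in Theorem~\ref{th:main.th.alpha}, will be the closing step (i) $\Rightarrow$ (ii). I would dualize the $\kappa_0$-construction there by setting $\kappa_0(y) := y^\beta\int_1^y \sigma(t)/t^{\beta+1}dt$, which is nondecreasing and, by (i), bounded above by $C\sigma(y)$. Because integration runs up to $y$ here, the two-sided equivalence $\kappa_0 \sim \sigma$ is not automatic as it was in the $\alpha$-case, so I would work with $\kappa := \sigma + \kappa_0$ (or iterate once more) to secure $\kappa \sim \sigma$ from both sides. A further iteration should produce a smooth representative for which $\kappa(y^{1/\beta})$ is \emph{convex} in $\log y$ (when $\beta > 0$), dual to the concavity obtained in the $\alpha$-argument. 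A convexity-plus-iteration estimate then yields $\kappa(A^ny)/(A^{n\beta}\kappa(y)) \to \infty$ as $n \to \infty$ for $A>1$ large enough, which is (ii) for $\kappa$ and hence, by equivalence, for $\sigma$. The delicate technical points are securing the two-sided equivalence $\kappa \sim \sigma$ (the easy inequality $\kappa_0(y) \geq \sigma(y)/\alpha$ from the $\alpha$-proof has no direct counterpart here) and handling the boundary case $\beta = 0$ separately, where the two conditions degenerate and one argues directly from the inequality $\int_1^y \sigma(t)/t\,dt \leq C\sigma(y)$.
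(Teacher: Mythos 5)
Your cycle is exactly the paper's, namely (i)$\Rightarrow$(ii)$\Rightarrow$(iii)$\Rightarrow$(iv)$\Rightarrow$(v)$\Rightarrow$(vi)$\Rightarrow$(vii)$\Rightarrow$(viii)$\Rightarrow$(ix)$\Rightarrow$(x)$\Rightarrow$(i), and the intermediate steps you sketch are, up to minor bookkeeping, either the paper's arguments or faithful mirrors of the corresponding steps of Theorem~\ref{th:main.th.alpha} (two small points: at $\beta=0$ the bound in (vi)$\Rightarrow$(vii) degenerates and one uses $\int_k^{k+1}\frac{dt}{t\,\sigma(t)}\ge\frac{1}{(k+1)\sigma(k+1)}$ instead; and in your route to (viii) the almost-decreasingness of $p^\beta/\sigma(p)$ should be extracted by comparing the tail of (vii) at $p$ with a block $[q,2q]$ for arbitrary $q\ge p$, since iterating the doubling inequality obtained from the block $[p,2p]$ alone loses the uniform constant). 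The genuine problem is the closing implication (i)$\Rightarrow$(ii) for $\beta>0$, which you reduce to an unproved construction.

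Your $\kappa_0(y)=y^\beta\int_1^y\sigma(t)\,t^{-\beta-1}dt$ satisfies $\kappa_0\le C\sigma$ but, as you yourself note, admits no automatic lower bound; the patch $\kappa:=\sigma+\kappa_0$ restores two-sided equivalence but destroys the integral structure, so there is no reason why $\kappa(y^{1/\beta})$ should be convex, and applying the transform to $\kappa$ once more loses the lower bound again. Thus ``a further iteration should produce a smooth representative for which $\kappa(y^{1/\beta})$ is convex'' is precisely the missing step, and it is not a routine dualization: the paper itself remarks right after the theorem that the convex-representative characterization of $\beta(\sigma)>\beta$ is only established under the extra hypothesis $\alpha(\sigma)<\infty$, i.e.\ it is ``not available for $\beta(\sigma)$ in general''. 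Moreover, even granting such a representative, convexity only yields $\kappa(Kt)\ge K^\beta\kappa(t)-c$, hence $\liminf_{t\to\infty}\sigma(Kt)/(K^\beta\sigma(t))\ge1$, not the divergence required in (ii); you would still have to combine it with (i) through a summation estimate which the proposal does not supply. The paper closes the cycle with a short direct computation needing none of this: applying (i) twice, $C^2\frac{\sigma(ky)}{(ky)^\beta}\ge\int_y^{ky}\int_y^{t}\frac{\sigma(u)}{u^{\beta}}\frac{du}{u}\frac{dt}{t}\ge\frac{\sigma(y)}{\beta^2y^\beta}\bigl(\beta\log k-1+k^{-\beta}\bigr)$, and $C\sigma(ky)\ge\sigma(y)\log k$ when $\beta=0$, which tends to infinity with $k$ uniformly in $y$. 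If you want to stay closer to your plan, note that one application of (i) already shows $t\mapsto\sigma(t)/t^\beta$ is almost increasing, and then bounding $\int_y^{A^ny}\sigma(u)u^{-1-\beta}du$ below block by block over $[A^{j-1}y,A^jy]$ and above by (i) at the point $A^ny$ gives $A^{-n\beta}\sigma(A^ny)\ge c\,n\,\sigma(y)y^{-\beta}$; no convex representative is needed.
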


\begin{proof}
 (i) $\Rightarrow$ (ii) First, we assume that $\b=0$. By (i) and the monotonicity of $\sigma$ for all $y\geq 1$ and every $k>1$ we see that
 $$ C\sigma(ky)\geq \int_1^{ky}\frac{\sigma(t) }{t}dt \geq \int_y^{ky}\frac{\sigma(t) }{t}dt \ge \sigma (y)  \log(k). $$
Then (ii) holds for $\b=0$. Secondly, if $\b>0$, applying (i) twice and using the monotonicity of $\sigma$ for all $y\geq 1$ and every $k>1$ we observe that
\begin{align*}
 C^2 \frac{\sigma(ky)}{(ky)^\b} &\geq  \int_y^{ky} C \frac{\sigma(t) }{t^{\b}}\frac{dt}{t} \geq
 \int_y^{ky} \int_y^{t} \frac{\sigma(u) }{u^{\b}}\frac{du}{u}\frac{dt}{t} \geq \sigma(y)
 \int_y^{ky} \left[ \frac{1 }{\b y^{\b}} -\frac{1}{\b t^\b} \right] \frac{dt}{t}\\ &= \frac{\sigma(y)}{\b y^\b}
 \left[ \log(k) - y^\b \left(  \frac{1 }{\b y^{\b}} - \frac{1}{\b (ky)^\b} \right) \right] = 
 \frac{\sigma(y)}{\b^2 y^\b} \left[\b \log(k) - 1 + k^{-\b} \right].
 \end{align*}
 Since $\lim_{k\to\oo}  \left[\b \log(k) - 1 + k^{-\b} \right]= \oo$, (ii) holds.\para 
  (ii) $\Rightarrow$ (iii) Immediate.\para
  (iii) $\Rightarrow$ (iv) There exists $K>1$ such that $\sigma_{\on{\low}} (K)>K^\b$. Since $\sigma$ is nondecreasing one may apply Theorem~\ref{th:convenient.rep.alpha.beta} and deduce that (iv) holds. \para
  (iv) $\Rightarrow$ (v) Immediate by Theorem~\ref{th:almostmonotonerep.alpha.beta}.\para
  (v) $\Rightarrow$ (vi) Analogous to (vii) $\Rightarrow$ (viii) in Theorem~\ref{th:main.th.alpha}.\para
	
(vi) $\Rightarrow$ (vii) If $\b>0$, it is analogous to (viii) $\Rightarrow$ (ix) in Theorem~\ref{th:main.th.alpha}. If $\b=0$, we use that for every $k\ge a$
$$\int_{k}^{k+1} \frac{1}{\sigma(t)} \frac{dt}{t} \ge \frac{1}{(k+1)\sigma(k+1)}, $$
and we conclude as for $\b>0$.\para

(vii) $\Rightarrow$ (viii) Applying condition (vii) twice and using the monotonicity of $\sigma$, for all $p\in\N$ with $p\geq a$ and every $m\geq 2$ we see that
\begin{align*}
 C^2 \frac{p^\b}{\sigma(p)}\geq&  \sum^\oo_{k=p} C \frac{k^{\b-1}}{\sigma(k)} \geq 
 \sum^\oo_{k=p} \frac{1}{k} \sum^\oo_{\ell= k}  \frac{\ell^{\b-1}}{\sigma(\ell)} \geq
 \sum^{mp-1}_{k=p} \frac{1}{k} \sum^{mp}_{\ell= k}  \frac{\ell^{\b-1}}{\sigma(\ell)} \geq 
\sum^{mp-1}_{k=p} \frac{1}{k mp\,\sigma (mp)} \sum^{mp}_{\ell= k}  \ell^{\b} \\
\geq&  \frac{1}{mp\,\sigma (mp)} \sum^{mp-1}_{k=p} \frac{1}{k } \int^{mp}_{k}  t^{\b} dt
= \frac{(mp)^{\b}}{\sigma (mp) (\b+1)} \sum^{mp-1}_{k=p} \left[\frac{1}{k} -\frac{k^\b}{(mp)^{\b+1}} \right]\\ \geq&
\frac{(mp)^{\b}}{\sigma (mp) (\b+1)} \int_p^{mp} \left[\frac{1}{t} -\frac{t^\b}{(mp)^{\b+1}} \right] dt =
\frac{(mp)^{\b}}{\sigma (mp) (\b+1)^2} [\log(m^{\b+1}) - 1 +\frac{1}{m^{\b+1}} ].
 \end{align*}
Then, given $\theta\in(0,1)$, we take $m$ large enough  such that
$$\frac{\sigma(mp) }{m^\b \sigma(p)}\geq \frac{1}{C^2(\b+1)^2}[ \log(m^ {\b+1}) - 1 +\frac{1}{m^{\b+1}}] \geq \frac{1}{\theta},$$
for all $p\in\N$ with $p\geq a$. For $1\leq p<a$, $\sigma(p)=0$ and (viii) trivially holds.\para

(viii) $\Rightarrow$ (ix) Immediate.\para
(ix) $\Rightarrow$ (x)
 First, we prove that there exists $\ep\in(0,1)$ such that the sequence $(\sigma(p)/p^{\b+\ep})^\oo_{p=1}$ is almost increasing.  By (ix), there exists $\ep\in (0,1)$ and $p_0\in\N$ such that for every $p\geq p_0$, $\sigma(kp)> \sigma(p) k^{\b+\ep}$. Using the monotonicity of $\sigma$, we see that for every $p,q\in\N$ with $q\geq p\geq p_0$ there exists $s\in\N_0$ with $k^s p \leq q < k^{s+1} p$ and we observe that
$$\frac{\sigma(q)}{q^{\b+\ep}}\geq \frac{\sigma(k^s p)}{(k^{s+1}p)^{\b+\ep}}\geq \frac{ (k^{\b+\ep} )^s \sigma(p)}{ (k^{s+1}p)^{\b+\ep}}\geq
\frac{1}{k^{\b+\ep}} \frac{ \sigma(p)}{ p^{\b+\ep}}.$$
By the monotonicity of $\sigma$, we conclude that $(\sigma(p)/p^{\b+\ep})^\oo_{p=1}$ is almost increasing.
 We use this property to show that for all $p\in\N$
$$\sum_{k=1}^p \frac{\sigma(k)}{k^{\b+1}} \leq C \frac{\sigma(p)}{p^{\b+\ep}} \sum_{k=1}^p  \frac{1}{k^{1-\ep}} \leq C \frac{\sigma(p)}{p^{\b+\ep}} \int^p_{0}\frac{dt}{t^{1-\ep}} = \frac{C}{\ep} \frac{\sigma(p)}{p^{\b}}. $$\para

(x) $\Rightarrow$ (i) By the monotonicity of $\sigma$, for $y\geq 1$ we see that
$$\int_{1}^y \frac{\sigma(t)}{t^{1+\b}} dt \leq \sum_{k=1}^{\lfloor y\rfloor-1} \sigma(k+1) \int_{k}^{k+1} \frac{dt}{t^{1+\b}} + \sigma (y)\int_{\lfloor y\rfloor}^y \frac{dt}{t^{1+\b}} \leq \sum_{k=1}^{\lfloor y\rfloor-1} \frac{\sigma(k+1)}{k^{1+\b}}  +  \frac{\sigma (y)}{ (\lfloor y\rfloor)^{1+\b} },  $$
where the sum does not appear if $\lfloor y\rfloor=1$. Using that  $(k+1)/k \leq 2$ and $y/\lfloor y\rfloor\leq 2$ for all $y,k\geq 1$ and (x) we observe that 
$$\int_{1}^y \frac{\sigma(t)}{t^{1+\b}} dt  \leq 2^{1+\b}
\sum_{k=1}^{\lfloor y\rfloor-1} \frac{\sigma(k+1)}{(k+1)^{1+\b}}  +  2^{\b} \frac{\sigma (y)}{ y^{\b} }
\leq 2^{1+\b} C \frac{\sigma(\lfloor y\rfloor)}{(\lfloor y\rfloor)^{\b} } +  2^{\b} \frac{\sigma (y)}{ y^{\b} }
\leq (2^{1+2\b}C+2^\b) \frac{\sigma (y)}{ y^{\b} }.$$
\end{proof}

The index $\ga(\sigma)$ naturally appears in the study of the surjectivity of the Borel map in ultraholomorphic classes, see
~\cite{JimenezSanzSchindlLaplace}.
According to the last result,
one might analogously define an index
\begin{equation}\label{lowernewindex2}
\overline{\gamma}(\sigma):=\inf\{\gamma>0: \exists\;A>1:\;\;\;\liminf_{t\rightarrow\infty} \frac{\sigma(A^{\gamma}t)}{\sigma(t)}>A\}.
\end{equation}
We have refrained from providing its detailed study, similar to that of $\gamma(\sigma)$, but we can mention that
$1/\overline{\gamma}(\sigma)=\beta(\sigma)$, and that $\sigma$ has \hyperlink{om6}{$(\omega_6)$} if and only if $\overline{\gamma}(\sigma)$ is finite.
If $\b>0$, $\sigma$ is as above and in addition $\a(\sigma)<\oo$, then one can show that $\b(\sigma)>\b>0$ if and only if there exists a nondecreasing function $\kappa: [1,+\infty)\longrightarrow[0,+\infty)$ such that $\sigma {\hyperlink{sim}\sim}\kappa$, $\kappa$ satisfies Theorem~\ref{th:main.th.beta}.(i) and $\kappa(t^{1/\b})$ is convex, recovering the missing equivalent condition which is available for $\a(\sigma)$ but is not for $\b(\sigma)$ in general. Finally, the considerations made in Remark~\ref{remark:alpha.stability.open} for $\a(\sigma)$ are also valid for $\b(\sigma)$.  \para

Using that $\lim_{t\to\oo}\sigma(t)=\oo$, with a proof similar to the one of Corollary~\ref{coro:om1}, we see that 
Theorem~\ref{th:main.th.beta}.(iii) is satisfied for $\b=0$ if and only if $\sigma$ satisfies \hyperlink{om6}{$(\omega_{\on{6}})$} and we obtain the desired characterization of this growth property.  

\begin{corollary}\label{coro:om6}
Let $\sigma$ be as above. The following are equivalent:
\begin{itemize}
\item[(i)] $\sigma$ satisfies \hyperlink{om6}{$(\omega_{\on{6}})$},
\item[(ii)]$\sigma$ satisfies every/some of the equivalent conditions (i)--(x) in Theorem~\ref{th:main.th.beta} for $\b=0$.
\end{itemize}
In particular,  $\sigma$ satisfies \hyperlink{om6}{$(\omega_{\on{6}})$} if and only if  $\b(\o)>0$.
\end{corollary}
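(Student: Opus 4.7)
The plan is to reduce Corollary~\ref{coro:om6} to the equivalence of $(\omega_6)$ with a single representative from the list in Theorem~\ref{th:main.th.beta} taken at $\b=0$; the cleanest choice is condition (iii), which for $\b=0$ reads: there exists $K>1$ with $\liminf_{t\to\infty}\sigma(Kt)/\sigma(t)>1$. The mutual equivalence of items (i)--(x) of Theorem~\ref{th:main.th.beta} is already established there, so matching $(\omega_6)$ with (iii) is enough, and the concluding statement $\b(\sigma)>0$ is nothing but item (iv) of Theorem~\ref{th:main.th.beta}.

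For $(\omega_6)\Rightarrow$ (iii), I would first rule out $H=1$: otherwise $2\sigma(t)\le\sigma(t)+1$ would force $\sigma(t)\le 1$, contradicting $\sigma(t)\to\infty$; hence $H>1$. Dividing $2\sigma(t)\le\sigma(Ht)+H$ by $\sigma(t)$ and letting $t\to\infty$, the error term $H/\sigma(t)$ vanishes and one obtains $\liminf_{t\to\infty}\sigma(Ht)/\sigma(t)\ge 2>1$. So (iii) holds with $K:=H$.

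The converse (iii)$\Rightarrow(\omega_6)$ is the more delicate direction, and I would treat it in the spirit of Corollary~\ref{coro:om1}. From (iii), fix $\delta>0$ and $t_0\ge 0$ with $\sigma(Kt)\ge(1+\delta)\sigma(t)$ for every $t\ge t_0$. Iterating gives $\sigma(K^n t)\ge(1+\delta)^n\sigma(t)$ for $t\ge t_0$; pick $n\in\N$ such that $(1+\delta)^n\ge 2$, and set $H:=\max(K^n,\,2\sigma(t_0))$. The monotonicity of $\sigma$ together with $H\ge K^n$ yields $2\sigma(t)\le\sigma(K^n t)\le\sigma(Ht)$ for $t\ge t_0$, while for $0\le t<t_0$ one simply has $2\sigma(t)\le 2\sigma(t_0)\le H$; in either regime $2\sigma(t)\le\sigma(Ht)+H$, which is $(\omega_6)$.

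The only genuine obstacle is the bounded pre-iteration regime $0\le t<t_0$, where (iii) supplies no quantitative information. This is precisely why $(\omega_6)$ is stated with an additive term $+H$ rather than purely multiplicatively: enlarging $H$ beyond $2\sigma(t_0)$ absorbs the behaviour of $\sigma$ on $[0,t_0)$ into the additive constant, without disturbing the large-$t$ estimate produced by the iteration.
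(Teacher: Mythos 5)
Your argument is correct and essentially the paper's own: the paper also reduces the corollary to the equivalence of $(\omega_6)$ with condition (iii) of Theorem~\ref{th:main.th.beta} for $\b=0$, established by an iteration argument ``similar to the one of Corollary~\ref{coro:om1}'' that uses $\lim_{t\to\infty}\sigma(t)=\infty$ and absorbs the behaviour on a bounded initial interval into the additive constant $H$. Your write-up simply fills in the details that the paper leaves as a sketch.
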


\begin{remark}\label{rema:RVandproperties}
According to Remark~\ref{rema:ordersROMU}, it is worthy to notice that, if $\sigma$ is of regular variation of index $\o$, what happens for most of the examples appearing in the applications, then 
all the information provided by the previous results is concentrated in one value since $\b(\sigma)= \mu(\sigma)=\ro(\sigma)=\a(\sigma)=\o\in[0,\oo)$. In this case, $\sigma$ always satisfies \hyperlink{om1}{$(\omega_{\on{1}})$}, and $\o<1$ if and only if $\sigma$ satisfies \hyperlink{omsnq}{$(\omega_{\text{snq}})$}. If $\o=0$, $\sigma$ is said to be of {\it slow variation} and we see that $\sigma$, regularly varying, does not satisfy \hyperlink{om6}{$(\omega_{\on{6}})$} if and only if it is of slow variation. 
\end{remark}

\begin{remark}\label{rema:weightsAT0}
The results presented in this subsection are prepared to be applied to weight functions whose mass is concentrated at $\oo$. However, in the literature of weighted spaces it is common to find a function $h:(0,\oo)\to(0,\oo)$ nonincreasing with $\lim_{t\to 0} h(t)=\oo$ or a function $H:(0,\oo)\to(0,\oo)$ nondecreasing with $\lim_{t\to 0} H(t)=0$ as weight functions for the corresponding structure. Since in that context similar conditions appear for such functions, see~\cite[(2.1) and (2.2)]{Blasco12} or the definition of regular modulus of continuity in~\cite{Dynkin80}, one might be tempted to obtain an analogous version of Theorems~\ref{th:main.th.alpha} and~\ref{th:main.th.beta} which could be done by defining $\sigma(t)=h(1/t)$ in the first case and $\sigma(t)=1/H(1/t)$ in the second one.  
\end{remark}

In the following examples we can compute the indices and deduce the corresponding properties for $\sigma$ according to the previous corollaries:
\begin{itemize}
\item[(i)] The Gevrey weights $\omega(t)=t^s$, $0<s\le 1$, are regularly varying of index $\ro=\a(\o)=\ro(\o)=\mu(\o)=\b(\o)=s$ and $\ga(\o)=1/s$. 

\item[(ii)] The weights $\omega(t)=t/(\log(e+t))^{\alpha}$, $\alpha\in\RR$,  are also regularly varying of index $\ro=\a(\o)=\ro(\o)=\mu(\o)=\b(\o)=\ga(\o)=1$. 

\item[(iii)] The weights $\omega(t)=\max\{0,\log(t)^s\}$, $s>1$, are regularly varying, in fact they are slowly varing so
$\ro=\a(\o)=\ro(\o)=\mu(\o)=\b(\o)=0$ and $\ga(\o)=\oo$. 
\end{itemize}

Moreover, for any $\sigma$ as above and any $s>0$, by Remark~\ref{rema.properties.Matuszewska.indices}, we observe that $\a(\sigma)<1/s$ (or resp. $\b(\sigma)>1/s$) if and only if some/every of the conditions in Theorem~\ref{th:main.th.alpha} (or resp. some/every of the conditions in Theorem~\ref{th:main.th.beta}) holds for $\sigma_s(t)=\sigma(t^s)$ and $\a=1$ (or resp. $\b=1$). Hence the list of equivalent definitions of the indicies $\a$ and $\b$ might be increased. In particular, we see that $\sigma$ satisfies \hyperlink{om1}{$(\omega_1)$} (or respectively \hyperlink{om6}{$(\omega_6)$}) if and only if $\sigma_s$  satisfies \hyperlink{om1}{$(\omega_1)$} (or resp. \hyperlink{om6}{$(\omega_6)$}) and we also see that $\a(\sigma)<1/s$ if and only if  $\sigma_s$ satisfies \hyperlink{omsnq}{$(\omega_{\on{snq}})$}. The same is valid if $\sigma_s$ is replaced by  $\sigma^s(t)= (\sigma(t))^s$. \para

It is worthy to notice that conditions \hyperlink{om3}{$(\omega_3)$} and \hyperlink{om4}{$(\omega_4)$} are related to 
the {\itshape Legendre-Fenchel-Young-conjugate} $\varphi^{*}_{\sigma}(x):=\sup\{x y-\sigma(e^y): y\ge 0\},$ defined for $x\geq 0$ and they seem not to be connected to O-regular variation.\para

Finally,  for condition \hyperlink{om7}{$(\omega_7)$}, introduced and described in~\cite[Lemmas 3.6.1, 5.4.1]{Schindldissertation}, \cite[Lemma 5.9 (5.12)]{RainerSchindlcomposition}  and~\cite[Appendix A]{JimenezSanzSchindlExtensionReal} and for $\sigma$ as above satisfying this condition, we can show that $\a(\sigma)=0$ as follows: If $\sigma$ satisfies \hyperlink{om7}{$(\omega_7)$} there exist $H, C, t_0\geq 0$ such that for all $t\ge t_0$,  $\sigma(t^2)\le C\sigma(H t)$, so for every $\lambda\in(1,\oo)$ there exists $t_\lambda \ge H t_0$ such that for all $t\ge t_\lambda$, $\sigma(\lambda t)\leq \sigma(t^2/H^2)\leq C \sigma(t) $. Hence $\sigma^{\on{up}} (\lambda)\leq C$ for every $\lambda\in(1,\oo)$. This implies the desired property by Theorem~\ref{th:convenient.rep.alpha.beta}, so $\b(\sigma)=\mu(\sigma)=\ro(\sigma)=\a(\sigma)=0$ and we recover the well-known incompatibility between \hyperlink{om7}{$(\omega_7)$} and \hyperlink{om6}{$(\omega_6)$}. Note that \hyperlink{om7}{$(\omega_7)$} is just a sufficient condition for having $\gamma(\sigma)=+\infty$.

\subsection{Legendre conjugates and the index \texorpdfstring{$\ga$}{Gamma}}\label{subsect.LegendreconjugateIndex}

As it was pointed out in Remark~\ref{rema.properties.Matuszewska.indices}, for any $\sigma$ we observe that
$$\b(\sigma(t) t) =\b(\sigma(t))+1, \qquad  \b(\sigma(t)) =\b(\sigma(t)/t)+1,$$
and the same holds for $\a$. Motivated by the relation between weight sequences and weight functions described in Section~\ref{AssociatedfunctionsORV}
and by its necessity for  the applications, see~\cite{JimenezSanzSchindlLaplace}, we want to obtain some similar relation for the index $\ga$, that is, we look for functions $\kappa_1,\kappa_2$ such that
\begin{equation}\label{eq:kappa1.kappa2}
 \ga (\sigma)= \ga(\kappa_1) +1, \qquad \ga(\kappa_2)=\ga(\sigma)+1.
\end{equation}
For this purpose we consider the Legendre conjugates. Let $\sigma:[0,\infty)\longrightarrow[0,\infty)$ be nondecreasing with $\lim_{t\rightarrow\infty}\sigma(t)=\infty$, 
then for any $s\ge 0$ we define the so-called {\itshape upper Legendre conjugate (or upper Legendre envelope)} of $\sigma$ by
\begin{equation*}\label{omegaconjugate}
\sigma^{\star}(s):=\sup_{t\ge 0}\{\sigma(t)-st\}.
\end{equation*}

We summarize some basic properties, see \cite[Remark 1.5]{PetzscheVogt} and \cite[(8), p. 156]{Beurling72}. 
 By definition, $\sigma^{\star}(0)=\infty$. If $\sigma$ has in addition \hyperlink{om5}{$(\omega_5)$}, then $\sigma^{\star}(s)<\infty$ for all $s>0$. In this case, the function $\sigma^{\star}:(0,\infty)\rightarrow[0,\infty)$ is nonincreasing,  convex and continuous with $\lim_{s\to 0}\sigma^{\star}(s)=\infty$ and   $\lim_{s\to \infty}\sigma^{\star}(s)=\lim_{t\to0}\sigma(t)$.\para

On the other hand for any $h:(0,\infty)\rightarrow[0,\infty)$ which is nonincreasing and $\lim_{t\rightarrow 0}h(t)=\infty$ for $t\geq 0$ we can define the so-called {\itshape lower Legendre conjugate (or envelope)}  by
\begin{equation*}\label{omegaconjugate0}
h_{\star}(t):=\inf_{s>0}\{h(s)+ts\}.
\end{equation*}
We observe that $h_{\star}$ is nondecreasing, concave and continuous with $\lim_{t\rightarrow\infty}h_{\star}(t)=\infty$ and 
$\lim_{t\rightarrow 0}h_{\star}(t)=\lim_{s\rightarrow \oo}h(s)$.\para
\begin{remark}\label{rema:majorantminorant.Legendre}
 Let $\sigma:[0,\infty)\longrightarrow[0,\infty)$ be nondecreasing with $\lim_{t\rightarrow\infty}\sigma(t)=\infty$ satisfying \hyperlink{om5}{$(\omega_5)$} and $h:(0,\infty)\rightarrow[0,\infty)$ be nonincreasing with $\lim_{t\rightarrow 0}h(t)=\infty$.  We observe that
\begin{enumerate}[(i)] 
 \item The function $(\sigma^{\star})_{\star}: (0,\oo) \to (0,\oo)$ is nondecreasing, concave and continuous with $\lim_{t\rightarrow\infty}(\sigma^{\star})_{\star}(t)=\infty$ and it is indeed the least concave majorant of $\sigma$ (in the sense that, if $\tau:[0,+\infty)\rightarrow[0,+\infty)$ is concave and $\sigma\le\tau$, then $(\sigma^{\star})_{\star}\le\tau$).
 \item The function $(h_{\star})^{\star}: (0,\oo) \to (0,\oo)$ is well-defined since a direct computation leads to $(h_{\star})^{\star} (u) \leq h (u) $ for all $u>0$, so $h_{\star}$ does not need to satisfy \hyperlink{om5}{$(\omega_5)$}. Moreover, $(h_{\star})^{\star}$ is nonincreasing, convex and continuous with $\lim_{t\rightarrow 0}h_{\star} (t)=\infty$,
 it is indeed the largest convex minorant of $h$ (in the sense that, if $k:[0,+\infty)\rightarrow[0,+\infty)$ is convex and $h\ge k$, then $(h_{\star})^{\star}\ge k$). 
\end{enumerate}
\end{remark}

When we consider the upper and lower Legendre conjugates the information is transferred from $0$ to $\oo$ and vice versa, so for any positive function $f$ defined in an interval $I\en (0,\oo)$ it is helpful to introduce the function $f^{\iota}(t):=f(1/t)$ in the corresponding  subinterval of $(0,\oo)$. The first result compares the indices of $\sigma$ and $(\sigma^{\star})^\iota$.

 \begin{proposition}\label{prop:indices.upperconjugate}
 Let $\sigma:[0,\infty)\longrightarrow[0,\infty)$ be nondecreasing with $\lim_{t\rightarrow\infty}\sigma(t)=\infty$. Assume that $\sigma$ satisfies \hyperlink{om5}{$(\omega_{\on{5}})$} and that $\sigma$ is equivalent to its least concave majorant, i.e., $\sigma \sim (\sigma^{\star})_{\star}$. Then 
 \begin{equation}\label{eq:gamma.upperconjugate}
  \ga(\sigma)=\ga((\sigma^{\star})^{\iota})+1.
 \end{equation}

\end{proposition}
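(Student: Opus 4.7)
The first move is to invoke Lemma~\ref{lemma.alpha.gamma} to recast \eqref{eq:gamma.upperconjugate} as the Matuszewska identity
$\alpha((\sigma^{\star})^{\iota})=\alpha(\sigma)/(1-\alpha(\sigma))$,
with the natural conventions at the extreme values $\alpha(\sigma)\in\{0,1\}$. Before attacking it, I would use the hypothesis $\sigma\sim(\sigma^{\star})_{\star}$ together with the identity $((\sigma^{\star})_{\star})^{\star}=\sigma^{\star}$ (which holds because $(\sigma^{\star})_{\star}$ is the least concave majorant of $\sigma$, so for any $s$ the supremum $\sup_{t}\{(\sigma^{\star})_{\star}(t)-st\}$ is pinched between $\sigma^{\star}(s)$ from both sides) and the $\sim$-invariance of $\gamma$ from Subsection~\ref{growthindexgamma}(ii) to assume from the outset that $\sigma$ is concave, in which case Young duality yields $\sigma=(\sigma^{\star})_{\star}$, i.e.\ $\sigma(t)=\inf_{s>0}\{\sigma^{\star}(s)+st\}$.

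\emph{Upper bound.} Fix $\alpha\in(\alpha(\sigma),1)$. By Theorem~\ref{th:main.th.alpha}(iv) there exist $K>1$, $c\in[1,K^{\alpha})$ and $t_{0}\ge 0$ with $\sigma(Kt)\le c\sigma(t)$ for $t\ge t_{0}$. Splitting the supremum defining $\sigma^{\star}(s/K)=\sup_{u\ge 0}\{\sigma(Ku)-su\}$ at $u=t_{0}$ immediately produces $\sigma^{\star}(s/K)\le c\sigma^{\star}(s/c)+\sigma(Kt_{0})$ for all $s>0$. Since $c<K^{\alpha}$ and $\sigma^{\star}$ is nonincreasing, $\sigma^{\star}(s/c)\le\sigma^{\star}(K^{-\alpha}s)$; writing $L:=K^{1-\alpha}>1$ and $s':=K^{-\alpha}s$ this becomes $\sigma^{\star}(s'/L)\le c\sigma^{\star}(s')+\sigma(Kt_{0})$. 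Dividing by $\sigma^{\star}(s')$, which diverges as $s'\to 0^{+}$, and passing to $r=1/s'$ yields $\limsup_{r\to\infty}(\sigma^{\star})^{\iota}(Lr)/(\sigma^{\star})^{\iota}(r)\le c<K^{\alpha}=L^{\alpha/(1-\alpha)}$. Theorem~\ref{th:main.th.alpha}(iv) applied to the nondecreasing function $(\sigma^{\star})^{\iota}$ then gives $\alpha((\sigma^{\star})^{\iota})<\alpha/(1-\alpha)$, and $\alpha\downarrow\alpha(\sigma)$ closes this half.

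\emph{Reverse bound.} Given $\beta>\alpha((\sigma^{\star})^{\iota})$, Theorem~\ref{th:main.th.alpha}(iv) supplies $L>1$, $c\in[1,L^{\beta})$ (note $c\ge 1$ because $\sigma^{\star}$ is nonincreasing and $s/L<s$) and, after absorbing the bounded contribution for large $s$ into a constant $D\ge 0$, an inequality $\sigma^{\star}(s/L)\le c\sigma^{\star}(s)+D$ valid for all $s>0$. Substituting into the biconjugation formula $\sigma(Lt)=\inf_{s>0}\{\sigma^{\star}(s/L)+st\}$ (obtained from $\sigma=(\sigma^{\star})_{\star}$ by the change of variable $s\mapsto s/L$) produces $\sigma(Lt)\le c\inf_{s>0}\{\sigma^{\star}(s)+(s/c)t\}+D=c\sigma(t/c)+D$. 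Setting $M:=Lc>1$ and replacing $t$ by $t/c$ this reads $\sigma(Mt)\le c\sigma(t)+D$, whence $\limsup_{t\to\infty}\sigma(Mt)/\sigma(t)\le c$. The algebraic identity $c^{1+\beta}<M^{\beta}$, equivalent to $c<L^{\beta}$ via $L=M/c$, is precisely $c<M^{\beta/(1+\beta)}$, so Theorem~\ref{th:main.th.alpha}(iv) for $\sigma$ delivers $\alpha(\sigma)<\beta/(1+\beta)$, and $\beta\downarrow\alpha((\sigma^{\star})^{\iota})$ finishes the proof.

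The main obstacle I anticipate is arranging the Legendre computations so that the constants and scalings match exactly the algebraic identities $\alpha\leftrightarrow\alpha/(1-\alpha)$ and $\beta\leftrightarrow\beta/(1+\beta)$; the auxiliary scalings $L=K^{1-\alpha}$ (forward) and $M=Lc$ (reverse) are introduced for this very purpose, and one has to check that the additive tails $\sigma(Kt_{0})$ and $D$ are harmlessly swallowed by the divergence of $\sigma^{\star}$ at $0$ and of $\sigma$ at $\infty$. The boundary cases $\alpha(\sigma)\in\{0,1\}$ reduce to letting the parameters tend to the extremes in the inequalities above.
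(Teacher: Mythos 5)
Your argument is correct and is in substance the paper's own proof: both directions rest on the same mechanism, namely transporting a $(P_{\sigma,\gamma})$-type ratio inequality through the supremum defining $\sigma^{\star}$ and, in the converse direction, through the infimum defining the lower conjugate, with your up-front reduction to concave $\sigma$ (via $\kappa^{\star}=\sigma^{\star}$ for the least concave majorant $\kappa$) playing the role of the paper's appeal to $\sim$-stability of $\gamma$ at the end. The repackaging through $\alpha(\sigma)=1/\gamma(\sigma)$ and Theorem~\ref{th:main.th.alpha}(iv) is only cosmetic; just note the small slip that to pass from $\sigma(Lt)\le c\,\sigma(t/c)+D$ to $\sigma(Mt)\le c\,\sigma(t)+D$ with $M=Lc$ one substitutes $ct$ for $t$, not $t/c$.
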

\begin{proof}
We fix $\ga<\ga(\sigma)$, so there exists $K,H>1$ and $\ep\in(0,1)$  such that for all $t\geq 0$
$$\sigma(K^{\ga}t)\leq K^{1-\ep} \sigma(t) +H. $$
Using that $(\sigma^{\star})^{\iota}$ is nondecreasing, for all $s>0$ we see that 
\begin{align*}
 (\sigma^{\star})^{\iota} (K^{\ga-1} s) &= \sup_{t\ge 0} \Big\{ \sigma(t)-\frac{t}{K^{\ga-1} s} \Big\} = 
\sup_{u\ge 0} \Big\{ \sigma(K^\ga u)-\frac{K u}{  s} \Big\} \leq \sup_{u\ge 0} \Big\{ K^{1-\ep} \sigma(u)-\frac{K u}{  s} \Big\} + H 
\\ &\leq K^{1-\ep} (\sigma^{\star})^{\iota} (K^{-\ep} s) +H \leq  K^{1-\ep} (\sigma^{\star})^{\iota} (s) +H.
\end{align*}
Since $\lim_{s\to\oo} (\sigma^{\star})^{\iota} (s) =\oo $, we deduce that $\ga-1< \ga ( (\sigma^{\star})^{\iota})$. \parb

Conversely, we fix $\ga<\ga ( (\sigma^{\star})^{\iota})$, so there exists $K>1$ and $\ep_0\in(0,1)$ such that for all $\ep\in(0,\ep_0)$ there exists $H_\ep>0$  such that for all $t> 0$
$$(\sigma^{\star})^{\iota} (K^{\ga}t)\leq K^{1-\ep} (\sigma^{\star})^{\iota}(t) +H_\ep. $$
Hence, for all $t>0$ we observe that
\begin{align*} 
(\sigma^\star)_\star (K^{\ga+1-\ep} t)\leq K^{1-\ep} (\sigma^\star)_\star (t)+H_\ep.
\end{align*}
Then $\ga+1-\ep<\ga((\sigma^\star)_\star) $ and, since this holds for all $\ep\in(0,\ep_0)$, 
we deduce that $\ga+1\leq \ga((\sigma^\star)_\star)$.
Since the value of the index $\ga$ is stable for \hyperlink{sim}{$\sim$} we conclude that $\ga+1\leq\ga(\sigma)$. 
\end{proof}
 
Hence we have found a candidate for $\kappa_1$ in~\eqref{eq:kappa1.kappa2}, we will show that under suitable assumptions $\kappa_2$ can be chosen as $(\sigma^\iota)_{\star}$.
 
  \begin{proposition}\label{prop:indices.lowerconjugate}
  Let $\sigma$ be as above. Assume that $\sigma^\iota$ is equivalent to its largest convex minorant, i.e., $\sigma^\iota \sim ((\sigma^\iota)_{\star})^{\star}$. Then 
 \begin{equation}\label{eq:gamma.lowerconjugate}
   \gamma(\sigma)+1=\gamma((\sigma^{\iota})_{\star}).
 \end{equation}

\end{proposition}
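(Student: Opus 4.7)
The approach is to reduce the claim to Proposition~\ref{prop:indices.upperconjugate} applied not to $\sigma$, but to the auxiliary function $\tilde\sigma:=(\sigma^\iota)_\star$. The point is that the lower Legendre envelope of $\sigma^\iota$ is automatically a nondecreasing concave function to which the previously established duality for the upper conjugate can be applied, and the hypothesis on $\sigma^\iota$ together with the involution $\iota$ lets us read the output of that proposition back as a statement about $\sigma$.

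First, I would check that $\tilde\sigma$ meets the hypotheses of Proposition~\ref{prop:indices.upperconjugate}. By the general properties of the lower Legendre envelope recalled above, $\tilde\sigma:(0,\infty)\to[0,\infty)$ is concave, nondecreasing, continuous, and $\lim_{t\to\infty}\tilde\sigma(t)=\infty$. For any fixed $s_0>0$ one has $\tilde\sigma(t)/t\le \sigma^\iota(s_0)/t+s_0$, whence $\limsup_{t\to\infty}\tilde\sigma(t)/t\le s_0$; letting $s_0\to 0^+$ shows $\tilde\sigma$ satisfies \hyperlink{om5}{$(\omega_5)$}. Since $\tilde\sigma$ is already concave, it coincides with its own least concave majorant, so the requirement $\tilde\sigma\sim(\tilde\sigma^\star)_\star$ holds trivially. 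Proposition~\ref{prop:indices.upperconjugate} therefore applies to $\tilde\sigma$ and yields
\begin{equation*}
\gamma(\tilde\sigma)=\gamma((\tilde\sigma^\star)^\iota)+1.
\end{equation*}

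Next, I would identify $(\tilde\sigma^\star)^\iota$ with $\sigma$ up to equivalence. By Remark~\ref{rema:majorantminorant.Legendre}(ii), $\tilde\sigma^\star=((\sigma^\iota)_\star)^\star$ is precisely the largest convex minorant of $\sigma^\iota$, so the standing hypothesis $\sigma^\iota\sim((\sigma^\iota)_\star)^\star$ reads $\sigma^\iota\sim\tilde\sigma^\star$. Evaluating this equivalence at $1/t$ gives $\sigma\sim(\tilde\sigma^\star)^\iota$, both sides being nondecreasing functions tending to $\infty$; the invariance of $\gamma$ under \hyperlink{sim}{$\sim$} in this class (property (ii) in Subsection~\ref{growthindexgamma}) then delivers $\gamma((\tilde\sigma^\star)^\iota)=\gamma(\sigma)$. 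Inserting this into the displayed equality produces $\gamma((\sigma^\iota)_\star)=\gamma(\sigma)+1$, as claimed.

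The only step that genuinely requires care is the verification of \hyperlink{om5}{$(\omega_5)$} for $\tilde\sigma$, since it is what makes $\tilde\sigma^\star$ finite and triggers the applicability of Proposition~\ref{prop:indices.upperconjugate}; fortunately this is a one-line consequence of the definition of the lower envelope. Everything else is bookkeeping with the involution $\iota$ and the identification of $((\sigma^\iota)_\star)^\star$ as the largest convex minorant of $\sigma^\iota$. A more direct route would mimic the proof of Proposition~\ref{prop:indices.upperconjugate}, translating a bound $\sigma(K^\gamma t)\le K^{1-\varepsilon}\sigma(t)+H$ into a matching bound for $(\sigma^\iota)_\star(K^{\gamma+1}t)$ through the substitution $u=1/s$ in $\inf_{s>0}\{\sigma(1/s)+ts\}$, but this essentially duplicates work already done for the upper conjugate, so the reduction above seems preferable.
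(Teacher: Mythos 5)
Your proof is correct and takes essentially the same route as the paper: apply the upper-conjugate duality to $\tilde\sigma=(\sigma^{\iota})_{\star}$ to get $\gamma(\tilde\sigma)=\gamma((\tilde\sigma^{\star})^{\iota})+1$, then transfer back to $\sigma$ via the hypothesis $\sigma^{\iota}\sim((\sigma^{\iota})_{\star})^{\star}$ and the stability of $\gamma$ under $\sim$. The only (harmless) difference is that you check \hyperlink{om5}{$(\omega_5)$} and concavity so as to invoke Proposition~\ref{prop:indices.upperconjugate} as a black box, whereas the paper simply notes that its proof can be repeated for $(\sigma^{\iota})_{\star}$.
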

 
\begin{proof}
We observe that $(\sigma^{\iota})_{\star}:[0,\oo) \to [0,\oo)$ is  nondecreasing with $\lim_{t\to\oo} (\sigma^{\iota})_{\star}(t)=\oo$, concave and continuous. Then $((\sigma^{\iota})_{\star})^{\star}$ is  well-defined and has the properties described in Remark~\ref{rema:majorantminorant.Legendre}. We can follow the proof of Proposition~\ref{prop:indices.upperconjugate} and show that 
$$ \ga((\sigma^{\iota})_{\star})=\ga(((\sigma^{\iota})_{\star})^{\star})^{\iota})+1.$$
Since $\sigma^\iota \sim ((\sigma^\iota)_{\star})^{\star}$, then $\sigma \sim (((\sigma^\iota)_{\star})^{\star})^\iota$ and, by the stability of the index $\ga$ for \hyperlink{sim}{$\sim$}, we conclude that \eqref{eq:gamma.lowerconjugate} is valid.
\end{proof}

If $\sigma \sim \kappa$ with $\kappa$ concave or if $\sigma^\iota \sim \tau$ with $\tau$ convex,  then 
 $\sigma$ is equivalent to its least concave majorant  or $\sigma^\iota$ is equivalent to its largest convex minorant, respectively, and Proposition~\ref{prop:indices.upperconjugate} and~\ref{prop:indices.lowerconjugate} are also valid.
 Nevertheless, it will helpful to see  if these properties  are satisfied under some standard assumption. For $f:(0,\oo)\to (0,\oo)$ J. Peetre~\cite{peetre} shows that
 \begin{equation}\label{eq:concavePeetre}
 f(s)\leq C \max\left(1, \frac{s}{t} \right) f(t)
\end{equation}
is sufficient for $f$ to satisfy $(2C)^{-1} F(x) \leq f(x) \leq F(x)$ for all $x>0$ where $F$ is its least concave majorant. Note that condition~\eqref{eq:concavePeetre} holds if and only if $f$ is almost increasing in $(0,\oo)$ and $f(t)/t$ is almost decreasing in $(0,\oo)$. Since we want to allow the function $f$ to take the value $0$, a suitable modification of  
\eqref{eq:concavePeetre} is needed which entails the appearance of an additional summand which does not destroy the equivalence relation.

\begin{proposition}\label{prop:concavePeetre}
Let $f:[0,\oo)\to [0,\oo)$ be such that there exists $C\geq 1$  with
 \begin{equation}\label{eq:concavePeetre.0.value}
 f(s)\leq C f(t)\qquad \text{and} \qquad f(t)\frac{s}{t}\leq C (f(s)+1),
\end{equation}
for all $t>s\geq 0$. Then there exists $A\geq 1$ such that for every $x\in (0,\oo)$
\begin{equation}\label{eq:concavemajorant.equivalence}
AF(x)-A\leq f(x) \leq F(x),
\end{equation}
where $F$ is the least concave majorant of $f$. 
\end{proposition}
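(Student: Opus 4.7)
The plan is to exploit the pointwise description of the least concave majorant on the real line: for $x>0$,
\[
F(x) = \sup\{\lambda f(a) + (1-\lambda) f(b) : 0 \le a \le x \le b,\ \lambda \in [0,1],\ \lambda a + (1-\lambda) b = x\},
\]
which follows from Carath\'eodory's theorem applied to the hypograph of $f$ (any two points whose convex combination equals $x$ must lie on opposite sides of $x$). The upper inequality $f(x)\le F(x)$ is immediate from the definition of $F$, so the whole problem reduces to bounding each admissible combination $\lambda f(a)+(1-\lambda)f(b)$ by an affine function of $f(x)$, and then taking the supremum.

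Fix $x>0$ and such a decomposition. The first inequality in \eqref{eq:concavePeetre.0.value}, applied with $t=x$, $s=a$, yields $f(a)\le Cf(x)$. The second, applied with $t=b$, $s=x$, yields $f(b)\le (Cb/x)(f(x)+1)$, a relation that is trivially true when $b=x$. Substituting,
\[
\lambda f(a) + (1-\lambda) f(b) \le \lambda C f(x) + (1-\lambda)\frac{Cb}{x}(f(x) + 1).
\]
The constraint $\lambda a+(1-\lambda)b=x$ with $a\ge 0$ forces $(1-\lambda)b\le x$, i.e.\ $(1-\lambda)b/x\le 1$; combined with $\lambda\le 1$ the right-hand side is bounded by $2Cf(x)+C$. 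Passing to the supremum over admissible decompositions gives
\[
F(x)\le 2C\,f(x)+C,
\]
which, together with $f(x)\le F(x)$, rearranges to the claimed equivalence \eqref{eq:concavemajorant.equivalence} with a suitable constant $A$ built from $C$.

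The main subtlety, compared with Peetre's original argument in \cite{peetre}, is the possibility $f(s)=0$: the purely multiplicative control $f(t)(s/t)\le Cf(s)$ used by Peetre would then force $f(t)=0$ for all $t>s$, contradicting the generality needed here. The additive ``$+1$'' in the second inequality of \eqref{eq:concavePeetre.0.value} is exactly what circumvents this degeneracy, and its only cost is that the final comparison between $f$ and $F$ is of equivalence type \hyperlink{sim}{$\sim$}, i.e.\ carries an additive error, rather than a purely multiplicative equivalence $f\asymp F$ as in the strictly positive case.
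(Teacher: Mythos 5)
Your argument is correct and coincides with the paper's own proof: both use the representation of the least concave majorant as a supremum of two-point convex combinations $\lambda_1 f(x_1)+\lambda_2 f(x_2)$ with $x_1\le x\le x_2$, apply the first inequality of \eqref{eq:concavePeetre.0.value} to the point below $x$ and the second to the point above, and use $\lambda_2 x_2\le x$ to reach exactly the bound $F(x)\le 2Cf(x)+C$. Your explicit treatment of the degenerate cases $b=x$ (and implicitly $a=x$) and the Carath\'eodory justification of the two-point formula are harmless refinements of the same route, not a different proof.
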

\begin{proof}
 The least concave majorant of $f$ can be represented by
 $$F(x)=\sup\{\lambda_1 f(x_1)+\lambda_2 f(x_2);\, \lambda_1+\lambda_2=1,\,\, \lambda_1x_1+\lambda_2x_2=x,\,\, \lambda_i\geq 0\, \}$$
  and $f(x)\leq F(x)$ for all $x\in[0,\oo)$. Since $F(0)=f(0)$, we can assume, without loss of generality, that $0\leq x_1<x$ and that $x_2>x$. In this situation, using \eqref{eq:concavePeetre.0.value}, we see that
 $$\lambda_1 f(x_1)+\lambda_2 f(x_2)\leq C \lambda_1 f(x) + C \lambda_2 \frac{x_2}{x} (f(x)+1)\leq
 2C f(x) +C,$$
because $\lambda_1\leq 1$ and $\lambda_2 x_2 \leq x$. Hence \eqref{eq:concavemajorant.equivalence} holds. 
\end{proof}

In the case of the largest convex minorant, which was not considered by J.~Peetre, a similar result can be obtained inspired by the previous one with a slightly different proof.

\begin{proposition}\label{prop:convexPeetre}
Let $h:(0,\oo)\to [0,\oo)$ be such that there exists $C\geq 1$  and $\b>0$ such that 
\begin{equation}\label{eq:convexPeetre}
 h(s)+C\geq \frac{1}{C} \min\left(1, \frac{t^\b}{s^\b} \right) h(t),
\end{equation}
for all $t,s\in(0,\oo)$. Then there exists $A\geq 1$ such that for every $x\in (0,\oo)$
\begin{equation}\label{eq:convexminorant.equivalence}
H(x)\leq h(x) \leq AH(x)+A,
\end{equation}
where $H$ is the largest convex minorant of $h$. 
\end{proposition}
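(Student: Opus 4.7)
The plan is to prove only the upper bound $h(x)\le AH(x)+A$ in~\eqref{eq:convexminorant.equivalence}, since $H\le h$ holds by definition of the largest convex minorant. I would work with the variational description provided by Carath\'eodory's theorem in one variable: for every $x>0$,
$$H(x)=\inf\Bigl\{\lambda_1 h(x_1)+\lambda_2 h(x_2):\ \lambda_1,\lambda_2\ge 0,\ \lambda_1+\lambda_2=1,\ \lambda_1 x_1+\lambda_2 x_2=x,\ x_1,x_2>0\Bigr\},$$
fix an arbitrary two-point decomposition of $x$, and dispose of the trivial case $x_1=x_2=x$; so I may assume $0<x_1\le x\le x_2$.

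With this normalisation, the hypothesis~\eqref{eq:convexPeetre} yields a pair of lower bounds on $h(x_1)$ and $h(x_2)$ in terms of $h(x)$. Taking $(t,s)=(x,x_1)$, the fact that $t\ge s$ forces $\min(1,t^{\b}/s^{\b})=1$ and gives $h(x_1)+C\ge h(x)/C$; taking $(t,s)=(x,x_2)$, the inequality $s\ge t$ forces $\min(1,t^{\b}/s^{\b})=(x/x_2)^{\b}$ and gives $h(x_2)+C\ge (x/x_2)^{\b}h(x)/C$. Multiplying by $\lambda_1$ and $\lambda_2$, summing and using $\lambda_1+\lambda_2=1$ produces
$$\lambda_1 h(x_1)+\lambda_2 h(x_2)\ge \frac{h(x)}{C}\Bigl[\lambda_1+\lambda_2\Bigl(\frac{x}{x_2}\Bigr)^{\b}\Bigr]-C.$$

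The whole argument therefore reduces to the purely elementary, $h$-independent claim
$$c_{\b}:=\inf\Bigl\{\lambda_1+\lambda_2(x/x_2)^{\b}:\ 0<x_1\le x\le x_2,\ \lambda_1+\lambda_2=1,\ \lambda_i\ge 0,\ \lambda_1 x_1+\lambda_2 x_2=x\Bigr\}>0,$$
and this is what I expect to be the main technical step. Introducing the scaled variables $a=x_1/x\in[0,1]$ and $b=x_2/x\in[1,\infty)$, the quantity to minimise equals $((b-1)+(1-a)b^{-\b})/(b-a)$; a direct computation shows that its $a$-derivative has numerator $(b-1)(1-b^{-\b})\ge 0$, so the infimum is attained on the boundary $a=0$. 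The resulting one-variable function $g(b)=1-1/b+b^{-\b-1}$ on $[1,\infty)$ attains its minimum at $b_{0}=(\b+1)^{1/\b}$ with value $c_{\b}=1-(\b+1)^{-1/\b}\,\b/(\b+1)$, which is strictly positive because both factors $(\b+1)^{-1/\b}$ and $\b/(\b+1)$ lie in $(0,1)$ when $\b>0$.

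Plugging this lower bound into the displayed inequality, I would obtain $h(x)\le (C/c_{\b})(\lambda_1 h(x_1)+\lambda_2 h(x_2))+C^{2}/c_{\b}$ for every admissible decomposition. Taking the infimum over all such decompositions and setting $A:=C^{2}/c_{\b}\ge 1$ (since $C\ge 1$ and $c_{\b}\le 1$) yields the desired $h(x)\le AH(x)+A$, completing the plan.
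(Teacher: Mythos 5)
Your proposal is correct and follows essentially the same route as the paper: the same variational formula for $H$, the same application of \eqref{eq:convexPeetre} with $t=x$ and $s=x_1$, $s=x_2$, and the same reduction to bounding $\lambda_1+\lambda_2(x/x_2)^\b$ away from zero. Your exact two-variable minimization yields the constant $c_\b=1-K_\b$, which is exactly what the paper obtains more directly from $\lambda_2 x_2\le x$ together with $\sup_{u\in[0,1]}u(1-u^\b)=K_\b<1$.
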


\begin{proof}
 The largest convex minorant of $h$ can be represented by
 $$H(x)=\inf\{\lambda_1 h(x_1)+\lambda_2 h(x_2);\, \lambda_1+\lambda_2=1,\,\, \lambda_1x_1+\lambda_2x_2=x,\,\, \lambda_i\geq 0\, \}$$
 and $H(x)\leq h(x)$ for all $x\in(0,\oo)$.
 We fix $x\in (0,\oo)$, for every $x_1,x_2\in(0,\oo)$, $\lambda_1,\lambda_2\in[0,\oo)$ with $\lambda_1+\lambda_2=1$ and $\lambda_1x_1+\lambda_2x_2=x$ by \eqref{eq:convexPeetre}  we see that
 $$\lambda_1 h(x_1)+\lambda_2 h(x_2)\geq \frac{\lambda_1}{C} \min \left(1, \frac{x^\b}{x_1^\b}\right) h(x) + \frac{\lambda_2}{C} \min \left(1, \frac{x^\b}{x_2^\b}\right) h(x)-C. $$
 If $x_1=x_2=x$, then $\lambda_1 h(x_1)+\lambda_2 h(x_2)=h(x)$. Otherwise, we can assume, without loss of generality, that $x_1<x$ and that $x_2>x$. In this situation,
 $$\lambda_1 h(x_1)+\lambda_2 h(x_2)\geq \frac{h(x)}{C} \left( \lambda_1 + \lambda_2 \frac{x^\b}{x_2^\b}\right)-C=
 \frac{h(x)}{C} \left( 1 - \lambda_2 \left(1-\frac{x^\b}{x_2^\b}\right)\right)-C.$$
 Using that $\lambda_2 x_2\leq x$ and that $x_2>x$, we observe that
 $$\lambda_2 \left(1-\frac{x^\b}{x_2^\b}\right)\leq \frac{x}{x_2} \left(1-\frac{x^\b}{x_2^\b} \right)\leq \sup_{u\in[0,1]} u(1-u^\b) = \frac{1}{(1+\b)^{(\b+1)/\b}}=:K_{\b}<1.$$
 Therefore $\lambda_1 h(x_1)+\lambda_2 h(x_2)\geq h(x) (1-K_\b) C^{-1}-C$ and we conclude that \eqref{eq:convexminorant.equivalence} holds. 
\end{proof}

Thanks to the connection between the $\gamma$ index, the upper Matuszewska index and the almost monotonicity properties, it is possible to ensure that \eqref{eq:gamma.upperconjugate} and \eqref{eq:gamma.lowerconjugate} are valid under some standard assumptions.  

\begin{corollary}\label{coro:indices.upperconjugate.gama.greater.than1}
Let $\sigma$ be as above. If $\gamma(\sigma)>1$, then \eqref{eq:gamma.upperconjugate}  holds. 
\end{corollary}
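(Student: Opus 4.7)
The strategy is to verify the two hypotheses of Proposition~\ref{prop:indices.upperconjugate}, namely condition \hyperlink{om5}{$(\omega_5)$} and the equivalence $\sigma\sim(\sigma^{\star})_{\star}$, and then quote the proposition. Both hypotheses will follow from the same almost-monotonicity consequence of $\ga(\sigma)>1$.

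First I would translate the assumption into an almost-monotonicity statement. Since $\ga(\sigma)>1$, Lemma~\ref{lemma.alpha.gamma} gives $\a(\sigma)<1$, and Theorem~\ref{th:main.th.alpha} (equivalence (vi)$\Leftrightarrow$(vii)) then yields some $\ga_0\in(0,1)$ and some $a_0>0$ such that $t\mapsto \sigma(t)/t^{\ga_0}$ is almost decreasing on $[a_0,\infty)$, say $\sigma(t)/t^{\ga_0}\le M\,\sigma(s)/s^{\ga_0}$ for all $a_0\le s\le t$. From this, for $t\ge a_0$, one has $\sigma(t)\le M\sigma(a_0) (t/a_0)^{\ga_0}$, and since $\ga_0<1$ this forces $\sigma(t)/t\to 0$, i.e.\ $\sigma$ satisfies \hyperlink{om5}{$(\omega_5)$}.

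Next I would verify the two conditions of Proposition~\ref{prop:concavePeetre} for $\sigma$. The first, $\sigma(s)\le C\sigma(t)$ for $t>s\ge 0$, is trivial with $C=1$ by monotonicity. For the second, $\sigma(t)\,s/t\le C(\sigma(s)+1)$, I would split by cases. When $a_0\le s<t$, the almost-monotonicity of $\sigma(t)/t^{\ga_0}$ gives
\[
\frac{\sigma(t)\,s}{t}=\frac{\sigma(t)}{t^{\ga_0}}\,s\,t^{\ga_0-1}\le M\,\frac{\sigma(s)}{s^{\ga_0}}\,s\,t^{\ga_0-1}=M\sigma(s)\Bigl(\frac{s}{t}\Bigr)^{1-\ga_0}\le M\sigma(s),
\]
and when $0\le s<t$ with $s<a_0$ or $t<a_0$, the quantity $\sigma(t)\,s/t$ is bounded by a constant depending only on $a_0$ (using $s<a_0$ and the bound $\sigma(t)/t\le M\sigma(a_0)/a_0$ for $t\ge a_0$, or just $\sigma(t)\le\sigma(a_0)$ and $s/t\le 1$ when $t\le a_0$). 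Absorbing the constant into the ``$+1$'' on the right-hand side furnishes \eqref{eq:concavePeetre.0.value}.

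Now Proposition~\ref{prop:concavePeetre} produces the least concave majorant $F$ of $\sigma$ with $\sigma\sim F$. Since $\sigma$ has \hyperlink{om5}{$(\omega_5)$}, Remark~\ref{rema:majorantminorant.Legendre}(i) identifies $F=(\sigma^{\star})_{\star}$, so $\sigma\sim(\sigma^{\star})_{\star}$. Applying Proposition~\ref{prop:indices.upperconjugate} then gives \eqref{eq:gamma.upperconjugate}. The only delicate step is the case analysis in verifying the second inequality of \eqref{eq:concavePeetre.0.value} globally on $[0,\infty)$, where $\sigma$ may vanish near $0$ and the almost-monotonicity is only available away from zero; but the harmless additive ``$+1$'' on the right side of \eqref{eq:concavePeetre.0.value} is precisely what absorbs the bounded contribution from the low range, so no further hypothesis is needed.
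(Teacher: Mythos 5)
Your proof is correct and follows essentially the same route as the paper: from $\gamma(\sigma)>1$ you get $\a(\sigma)<1$, hence an almost-decreasing property of $\sigma(t)/t^{\ga_0}$, which yields \hyperlink{om5}{$(\omega_5)$} and the Peetre-type condition \eqref{eq:concavePeetre.0.value}, so Proposition~\ref{prop:concavePeetre} gives $\sigma\sim(\sigma^{\star})_{\star}$ and Proposition~\ref{prop:indices.upperconjugate} concludes. The only differences are cosmetic: you verify $(\omega_5)$ and the low-range case of \eqref{eq:concavePeetre.0.value} by hand where the paper cites Remark~\ref{rema:om2om5omnq} and monotonicity.
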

\begin{proof}
If $\gamma(\sigma)>1$, then $\a(\sigma)<1$ so $\sigma(t)/t$ is almost decreasing in $[a,\oo)$ with $a>0$ such that $\sigma(t)>0$ in $[a,\oo)$. Since $\sigma(t)\leq \sigma(a)$ for all $t\in[0,a]$ and, by the monotonicity of $\sigma$, we see that \eqref{eq:concavePeetre.0.value} holds. Hence, by Proposition~\ref{prop:concavePeetre},  $\sigma$ is equivalent to its least concave majorant and by Remark~\ref{rema:om2om5omnq} it satisfies \hyperlink{om5}{$(\o_5)$}, then we conclude applying Proposition~\ref{prop:indices.upperconjugate}. 
 \end{proof}

\begin{corollary}\label{coro:indices.lowerconjugate}
Let $\sigma$ be as above. If $\gamma(\sigma)>0$, then \eqref{eq:gamma.lowerconjugate} holds. 
\end{corollary}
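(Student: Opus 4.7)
The plan is to reduce the corollary to Proposition~\ref{prop:indices.lowerconjugate} by verifying that, under the hypothesis $\gamma(\sigma)>0$, the function $\sigma^{\iota}$ is equivalent to its largest convex minorant; once this is established, \eqref{eq:gamma.lowerconjugate} follows at once. The bridge between the almost monotonicity properties supplied by the Matuszewska index and the equivalence $\sigma^{\iota}\sim((\sigma^{\iota})_{\star})^{\star}$ will be Proposition~\ref{prop:convexPeetre}, used in the spirit of the proof of Corollary~\ref{coro:indices.upperconjugate.gama.greater.than1}, but now on the side of the lower conjugate.

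First, I would note that $\gamma(\sigma)>0$ implies $\alpha(\sigma)<\infty$ by Corollary~\ref{coro:om1} (equivalently, by Lemma~\ref{lemma.alpha.gamma}). Choose any $\beta>\alpha(\sigma)$. Then Theorem~\ref{th:almostmonotonerep.alpha.beta} provides some $a\geq 0$ and $M\geq 1$ such that $t\mapsto\sigma(t)/t^{\beta}$ is almost decreasing on $[a,\infty)$, i.e.\ $\sigma(u_{2})u_{1}^{\beta}\leq M\sigma(u_{1})u_{2}^{\beta}$ whenever $a\leq u_{1}\leq u_{2}$. Setting $h(t):=\sigma^{\iota}(t)=\sigma(1/t)$ for $t>0$ and performing the change of variables $u_{i}=1/s_{i}$, this is exactly the inequality
\[
h(s)\,\geq\, M^{-1}\,(t/s)^{\beta}\,h(t),\qquad s\geq t,\quad 1/s,\,1/t\in[a,\infty).
\]
Combined with the monotonicity $h(s)\geq h(t)$ for $s\leq t$, this gives the content of \eqref{eq:convexPeetre} (with the exponent $\beta$) whenever the arguments stay away from the problematic range $(1/a,\infty)$.

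The remaining point is to absorb what happens for $s$ or $t$ outside this range into the additive constant $+C$ appearing in \eqref{eq:convexPeetre}. If $a=0$ this is vacuous. If $a>0$, then on the one hand $h$ is bounded below by $0$, and on the other hand $h(u)=\sigma(1/u)\leq\sigma(a)$ for all $u\geq 1/a$, so $h$ is bounded on $[1/a,\infty)$; the tail bound $h(t)\leq\sigma(a)$ for $t\geq 1/a$ makes the inequality $h(s)+C\geq C^{-1}\min(1,(t/s)^{\beta})h(t)$ trivially valid in the exceptional cases by taking $C$ large enough, and the remaining mixed case is handled by combining the monotonicity of $h$ with the already verified almost monotonicity on $(0,1/a]$. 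Thus $h=\sigma^{\iota}$ satisfies the hypothesis of Proposition~\ref{prop:convexPeetre} with some constant $C\geq 1$ and the chosen $\beta>0$.

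By Proposition~\ref{prop:convexPeetre} we then obtain the equivalence $\sigma^{\iota}\sim((\sigma^{\iota})_{\star})^{\star}$, which is precisely the assumption of Proposition~\ref{prop:indices.lowerconjugate}, and the identity $\gamma(\sigma)+1=\gamma((\sigma^{\iota})_{\star})$ follows. The main obstacle in this outline is the book-keeping in the previous paragraph: the almost monotonicity of $\sigma(t)/t^{\beta}$ is guaranteed only on an interval bounded away from $0$, whereas \eqref{eq:convexPeetre} must hold on all of $(0,\infty)^{2}$. The additive slack $+C$ in \eqref{eq:convexPeetre}, absent from Peetre's original inequality, is exactly what allows this gluing and is the reason Proposition~\ref{prop:convexPeetre} was formulated in that form.
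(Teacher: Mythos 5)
Your proposal is correct and follows essentially the same route as the paper: from $\gamma(\sigma)>0$ one gets $\a(\sigma)<\infty$ via Lemma~\ref{lemma.alpha.gamma}, Theorem~\ref{th:almostmonotonerep.alpha.beta} gives the almost decreasing behaviour of $\sigma(t)t^{-\b}$ on $[a,\infty)$, which after the change of variables (plus the boundedness and monotonicity of $\sigma^{\iota}$ on $[1/a,\infty)$) yields \eqref{eq:convexPeetre}, and one concludes with Propositions~\ref{prop:convexPeetre} and~\ref{prop:indices.lowerconjugate}. Your explicit book-keeping of the exceptional and mixed cases is exactly the gluing the paper compresses into one sentence.
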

\begin{proof}
By Lemma~\ref{lemma.alpha.gamma}, $\a(\sigma)<\oo$, so by Theorem~\ref{th:almostmonotonerep.alpha.beta} there exists $a> 0$ and $\a>0$ such that $\sigma(t)t^{-\a}$ is almost decreasing in $[a,\oo)$. Consequently, $\sigma^{\iota}(t)t^{\a}$ is almost increasing in $(0,1/a]$. Since $\sigma^\iota$ is bounded in $[1/a,\oo)$ by $\sigma^{\iota}(1/a)$ and $\sigma^\iota$ is nonincreasing we deduce that \eqref{eq:convexPeetre} is valid and we conclude by Propositions~\ref{prop:convexPeetre} and~\ref{prop:indices.lowerconjugate}. 
 \end{proof}

\begin{remark}\label{rema.gammaoverline}
 Regarding the index $\overline{\gamma}(\sigma)$, see~\eqref{lowernewindex2}, and following similar ideas as those in Propositions~\ref{prop:indices.upperconjugate} and~\ref{prop:indices.lowerconjugate}, we can see that:
\begin{itemize}
\item[(i)] Let $\sigma$ be as above. If  $\sigma$ satisfies \hyperlink{om5}{$(\omega_{\on{5}})$} and it is equivalent to its least concave majorant, then
$$\overline{\gamma}(\sigma)=\overline{\gamma}((\sigma^{\star})^{\iota})+1.$$
Consequently $\sigma$ satisfies \hyperlink{om6}{$(\omega_6)$}, i.e. $\overline{\gamma}(\sigma)<+\infty$, if and only if $(\sigma^{\star})^{\iota}$ has \hyperlink{om6}{$(\omega_6)$}.
\item[(ii)] Let $\sigma$ be as above. If $\sigma^\iota$ is equivalent to its largest convex minorant, then
$$\overline{\gamma}(\sigma)+1=\overline{\gamma}((\sigma^{\iota})_{\star}).$$
Hence, $\sigma$ satisfies \hyperlink{om6}{$(\omega_6)$} if and only if $(\sigma^{\iota})_{\star}$ does so.
\end{itemize}
\end{remark}

\begin{remark}
The relations \eqref{eq:gamma.upperconjugate} and \eqref{eq:gamma.lowerconjugate} for the index $\ga$ can be rewritten in a different form that recalls the classical relation for conjugate indices appearing in the study of the convex conjugates. 
For this purpose, one needs to consider the notion of O-regular variation at $0$.
We say that a measurable function $h:(0,a)\to (0,\oo)$ with $a>0$ is {\it O-regularly varying at $0$}, if $h^{\iota}:(1/a,\oo)\to (0,\oo)$ is O-regularly varying. This notion has already appeared in different works dealing with O-regular variation and Orlicz spaces, see \cite{Maligranda85} . We are interested in the following quantities:
\begin{equation*}\label{matu1at0}
\alpha^0(h):=\inf\{\alpha\in\RR: x\mapsto\frac{h(x)}{x^{\alpha}}\;\text{is almost decreasing}\},
\end{equation*}
\begin{equation*}\label{matu2aot}
\beta^0(h):=\sup\{\beta\in\RR: x\mapsto\frac{h(x)}{x^{\beta}}\;\text{is almost increasing}\}.
\end{equation*}
By Theorem~\ref{th:almostmonotonerep.alpha.beta}, one might check that $\a^0(h)= -\b (h^{\iota})$ and $\b^{0} (h)= -\a (h^\iota)$. We put $\a^\oo(\sigma)=\a(\sigma)$ and $\b^\oo(\sigma)=\b(\sigma)$. Then \eqref{eq:gamma.upperconjugate} can be expressed as
 \begin{equation*}\label{eq:alpha.beta.upperconjugate}
  \frac{1}{\a^\oo(\sigma)}+\frac{1}{\b^0(\sigma^{\star})}=1,
 \end{equation*}
 and \eqref{eq:gamma.lowerconjugate} can also be written in terms of $\a^\oo((\sigma^\iota)_{\star})$ and $\b^0(\sigma^\iota)$. Moreover, under suitable assumptions, similar relations can be obtained for $\a^0 (\sigma^\star)$ and $\b^\oo(\sigma)$.
\end{remark}

\section{Weight sequences and O-regular variation}

\subsection{Weight sequences and growth indices}

In what follows, $\M=(M_p)_{p\in\N_0}$ always stands for a sequence of positive real numbers, and we always impose that $M_0=1$, where $\N_0=\{0,1,2,\dots \} = \N\cup \{0\}$.
The names of the conditions given by V.~Thilliez and, for the convenience of the reader, the corresponding descriptive
acronyms employed by the third author~\cite{Schindl16} have been used. We say that:
\begin{itemize}
\item[(i)]  $\M$ is \textit{logarithmically convex} (for short, \hypertarget{lc}{(lc)}) if for all $p\in\N$, $M_{p}^{2}\le M_{p-1}M_{p+1}$.
\item[(ii)]  $\M$ is of or has \textit{moderate growth} (briefly, \hypertarget{mg}{(mg)}) whenever there exists $A>0$ such that
$$M_{p+q}\le A^{p+q}M_{p}M_{q},\qquad p,q\in\N_0.$$%
\item[(iii)]  $\M$ satisfies the \textit{strong nonquasianalyticity condition} (for short, \hypertarget{snq}{(snq)}) if there exists $B>0$ such that
$$
\sum^\oo_{q= p}\frac{M_{q}}{(q+1)M_{q+1}}\le B\frac{M_{p}}{M_{p+1}},\qquad p\in\N_0.$$%
\end{itemize}
If $\M$ is \hyperlink{lc}{(lc)} and $\lim_{p\to\oo} (M_p)^{1/p}=\oo$ we say that $\M$ is a \textit{weight sequence}. According to V.~Thilliez~\cite{Thilliez03}, if $\M$ is \hyperlink{lc}{(lc)}, has \hyperlink{mg}{(mg)} and satisfies \hyperlink{snq}{(snq)}, we say that $\M$ is a \textit{strongly regular sequence}.\para

For a sequence $\M$ we define {\it the sequence of quotients}  $\m=(m_p)_{p\in\N_0}$ by 
$$m_p:=\frac{M_{p+1}}{M_p} \qquad p\in \N_0.$$%
We observe that for every $p\in\N$ one has $M_p= m_{p-1}m_{p-2}\cdots m_1m_0$ and $M_0=1$.
Hence the knowledge of one of the sequences
amounts to that of the other. Consequently one may work directly with $\m$ in the next subsections and
the sequences of quotients of sequences $\M$, $\L$, etc. will be denoted by lowercase letters $\bm$, $\bl$ and so on without  misunderstanding. Moreover, 
\hyperlink{snq}{(snq)} can be easily stated in terms of the sequence of quotients and, as the next lemma shows, the same holds for \hyperlink{lc}{(lc)}, \hyperlink{mg}{(mg)}, and the notion of weight sequence. Statements (i) and (ii) are classical and deduced. from a direct computation and (iii) was given by H.-J. Petzsche and D. Vogt~\cite[Lemma 5.3]{PetzscheVogt}.   

\begin{lemma}\label{lemma.basic.properties.seq}
 For every sequence $\M$ the following holds: 
\begin{enumerate}[(i)]
 \item $\M$ is logarithmically convex if and only if $\m$ is nondecreasing. 
 \item If $\M$ is logarithmically convex, $\lim_{p\to\oo} (M_p)^{1/p}= \oo$ if and only if $\lim_{p\to\oo} m_p= \oo$.  Hence $\M$ is a weight sequence if and only if $\m$ is nondecreasing and $\lim_{p\to\oo} m_p=\oo$.
 \item If $\M$ is logarithmically convex, then the following are equivalent:
 \begin{enumerate}
  \item[(iii.a)] $\M$ has \hyperlink{mg}{(mg)},
  \item[(iii.b)]  $\sup_{p\geq 1} m_p/M^{1/p}_p<\oo$,
  \item[(iii.c)] $\sup_{p\geq 1} m_{2p}/m_p<\oo$.
 \end{enumerate}
\end{enumerate}
\end{lemma}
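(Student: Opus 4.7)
The plan is to prove each part in turn, with parts (i) and (ii) being direct computations and part (iii) requiring a three-step argument.

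For part (i), I would simply rewrite the logarithmic convexity condition $M_p^2\le M_{p-1}M_{p+1}$ as $M_p/M_{p-1}\le M_{p+1}/M_p$, which is exactly $m_{p-1}\le m_p$. For part (ii), (i) guarantees that $\m$ is monotone, so $L:=\lim_{p\to\oo} m_p$ exists in $(0,\oo]$. If $L<\oo$, then $m_p\le L$ for every $p$, so $M_p=\prod_{j=0}^{p-1}m_j\le L^p$ and $M_p^{1/p}\le L<\oo$. Conversely, monotonicity gives $M_{2p}=M_p\prod_{j=p}^{2p-1}m_j\ge M_p\,m_p^p$, hence $M_{2p}^{1/(2p)}\ge m_p^{1/2}$; if $m_p\to\oo$ then so does $M_{2p}^{1/(2p)}$, and in fact $M_p^{1/p}\to\oo$ by the Cauchy geometric-mean argument.

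For part (iii), the easier implications are \textbf{(iii.a) $\Rightarrow$ (iii.b)} and \textbf{(iii.b) $\Rightarrow$ (iii.c)}. For the first, specialize (mg) to $q=p$ to obtain $M_{2p}\le A^{2p}M_p^2$; combined with $M_{2p}\ge M_p\,m_p^p$, this yields $m_p\le A^2 M_p^{1/p}$. For the second, the key observation is that monotonicity gives $M_{2p}=M_p\prod_{j=p}^{2p-1}m_j\le M_p\,m_{2p}^p$, so $M_{2p}^{1/(2p)}\le M_p^{1/(2p)}m_{2p}^{1/2}$. Inserting this into $m_{2p}\le C M_{2p}^{1/(2p)}$ and rearranging gives $m_{2p}\le C^2 M_p^{1/p}$; finally $M_p^{1/p}\le m_{p-1}\le m_p$ (geometric mean of a nondecreasing sequence does not exceed the last term), so $m_{2p}\le C^2 m_p$.

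The main obstacle is \textbf{(iii.c) $\Rightarrow$ (iii.a)}. Writing $D:=\sup_{p\ge 1} m_{2p}/m_p$ and iterating (iii.c), I obtain $m_{2^k j}\le D^k m_j$ for all $j\ge 1$ and $k\ge 0$, and by monotonicity $m_r\le m_{2^k j}$ whenever $2^k j\ge r$. Since for $j\ge 1$ one can take $k\le \log_2((p+j)/j)+1$ to cover $r=p+j$, this yields the polynomial-type estimate $m_{p+j}/m_j\le D\bigl((p+j)/j\bigr)^{\log_2 D}$. Assuming WLOG $p\le q$, I would then expand
\[
\frac{M_{p+q}}{M_p M_q}=\prod_{j=0}^{q-1}\frac{m_{p+j}}{m_j},
\]
take logarithms and use the combinatorial identity $\prod_{j=1}^{q-1}(p+j)/j=\binom{p+q-1}{q-1}\le 2^{p+q-1}$, which telescopes the product of the $\bigl((p+j)/j\bigr)^{\log_2 D}$ factors into $2^{(p+q-1)\log_2 D}$. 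The single $j=0$ factor $m_p/m_0$ is bounded separately by $C'\,p^{\log_2 D}$, which is absorbed into an extra exponential in $p+q$. Collecting the bounds yields $M_{p+q}/(M_p M_q)\le A^{p+q}$ for a suitable $A>0$, proving (mg). The delicate point is precisely this binomial estimate, which converts the polynomial per-factor bound into the desired uniform geometric bound.
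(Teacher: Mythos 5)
Your argument is correct. Note, however, that the paper does not actually prove this lemma: it dismisses (i) and (ii) as classical direct computations and attributes (iii) to Petzsche--Vogt (their Lemma 5.3), so your proposal supplies a genuinely self-contained proof where the paper only cites. Parts (i), (iii.a)$\Rightarrow$(iii.b) and (iii.b)$\Rightarrow$(iii.c) are exactly the expected computations, using $M_{2p}\ge M_p m_p^p$, $M_{2p}\le M_p m_{2p}^p$ and $M_p^{1/p}\le m_{p-1}$ for nondecreasing quotients. In (ii), the intermediate inequality $M_{2p}^{1/(2p)}\ge m_p^{1/2}$ tacitly assumes $M_p\ge1$, which log-convexity with $M_0=1$ does not guarantee when $m_0<1$; but this is harmless, since $M_p\ge m_0^p$ gives $M_{2p}^{1/(2p)}\ge m_0^{1/2}m_p^{1/2}$, and in any case the Cauchy geometric-mean argument you invoke (Ces\`aro means of $\log m_j$) settles the implication completely and is the clean way to state it. The substantive content is (iii.c)$\Rightarrow$(iii.a): your iteration $m_{2^kj}\le D^km_j$, the resulting polynomial bound $m_{p+j}/m_j\le D\bigl((p+j)/j\bigr)^{\log_2D}$, the factorization $M_{p+q}/(M_pM_q)=\prod_{j=0}^{q-1}m_{p+j}/m_j$, and the identity $\prod_{j=1}^{q-1}(p+j)/j=\binom{p+q-1}{q-1}\le 2^{p+q-1}$ (with the $j=0$ factor absorbed via $p^{\log_2D}\le D^p$) all check out and yield $M_{p+q}\le A^{p+q}M_pM_q$. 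This is essentially the classical Petzsche--Vogt/Komatsu-style argument; what your write-up buys is that the reader need not chase the reference, at the cost of a page of elementary bookkeeping the paper chose to omit.
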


 We say that $\M$ and $\L$ are {\itshape equivalent} and we write $\M\hypertarget{approx}{\approx}\L$ if there exists $C\geq 1$ such that for all $p\in\N_0$ $C^{-p} L_p\leq M_p\leq C^{p} L_p$. There is also an equivalence relation at the level of the sequence of quotients, we write $\m\hypertarget{simeq}{\simeq}\l$ if there exists $c\geq 1$ such that for all $p\in\N_0$, $c^{-1}\ell_p\leq m_p\leq c \ell_p$. If $\m\simeq\l$, then $\M\approx\L$ and the converse fails in general. However, 
 if $\M$ and $\L$ are \hyperlink{lc}{(lc)} and one of them has \hyperlink{mg}{(mg)}, then $\M\approx\L$ implies $\m\simeq\l$.\parb

The growth index $\gamma (\M)$ was defined and considered by V. Thilliez~\cite[Sect.\ 1.3]{Thilliez03} 
in the study of ultraholomorphic classes of functions. The original definition was given for strongly regular 
sequences and $\ga>0$, but one can consider it for any sequence $\M$ and $\ga\in\R$. We say $\M$ satisfies
property $\left(P_{\ga}\right)$  if there exists a sequence of real numbers $\l=(\ell_{p})_{p\in\N_0}$  
 such that:
\begin{enumerate}[(i)]
 \item $\m\simeq\l$, that is, there is a constant $a\ge1$ such that $a^{-1}m_{p}\le \ell_{p}\le am_{p}$, for all $p\in\N_0$,
 \item $\left((p+1)^{-\ga}\ell_{p}\right)_{p\in\N_0}$ is nondecreasing.
\end{enumerate}
If $(P_\ga)$ is satisfied, then $(P_{\ga'})$ is satisfied for $\ga'\leq\ga$.
It is natural to consider its \textit{growth index} $\ga(\M)$ defined by
$$\ga(\M):=\sup\{\ga\in\R:\, (P_{\ga})\hbox{ is fulfilled}\}.$$%
with the conventions in Remark~\ref{rema:inf.sup.empty}.\parb

For the study of the injectivity of the asymptotic Borel map, see~\cite{Sanzflat14}, the second author has defined the \textit{growth index}  $\o(\M)$ for any sequence $\M$ by
$$\o(\M):=\liminf_{p\to\oo} \frac{\log m_p }{\log p}.$$

By definition, the value of  $\ga(\M)$ and of $\o(\M)$ is stable for $\simeq$. For weight sequences with \hyperlink{mg}{(mg)}, in particular for strongly regular sequences, these values are also stable for $\approx$, thanks to the equivalence between $\approx$ and $\simeq$.  In Corollary~\ref{coro:stability.gamma.weak.equiv} and Remark~\ref{rema:stabilityOmM}, we will eventually show the stability under $\approx$ for arbitrary weight sequences. \para

We mention some interesting examples. In particular, those in (i) and (iii) appear in the applications of summability theory to the study of formal power series solutions for different kinds of equations.
\begin{itemize}
\item[(i)] The sequences $\M_{\a,\b}:=\big(p!^{\a}\prod_{m=0}^p\log^{\b}(e+m)\big)_{p\in\N_0}$, where $\a>0$ and $\b\in\R$, are strongly regular (in case $\b<0$, the first terms of the sequence have to be suitably modified in order to ensure \hyperlink{lc}{(lc)}). In case $\b=0$, we have the best known example of a strongly regular sequence, $\G_{\a}:=\M_{\a,0}=(p!^{\a})_{p\in\N_{0}}$, called the \textit{Gevrey sequence of order $\a$}. In this case, $\gamma(\M_{\a,\b})=\o(\M_{\a,\b})=\a$.
\item[(ii)] The sequence $\M_{0,\b}:=(\prod_{m=0}^p\log^{\b}(e+m))_{p\in\N_0}$, with $\b>0$, is \hyperlink{lc}{(lc)}, \hyperlink{mg}{(mg)} and $\bm$ tends to infinity, but \hyperlink{snq}{(snq)} is not satisfied and $\gamma(\M_{\a,\b})=\o(\M_{\a,\b})=0$.
\item[(iii)] For $q>1$, $\M_q:=(q^{p^2})_{p\in\N_0}$ is \hyperlink{lc}{(lc)} and \hyperlink{snq}{(snq)}, but not \hyperlink{mg}{(mg)} and $\gamma(\M_q)=\o(\M_q)=\oo$.
\end{itemize}

Most of the classical examples of strongly regular sequences satisfy that $\o(\M) = \ga(\M)$. Moreover, in the next section we will show that the values of the indices coincide for a large class of sequences, the ones whose sequence of quotients is regularly varying. However, it is possible to construct a strongly regular sequence for which the values are different, arbitrarily chosen, positive real numbers 
(see Remark~\ref{rema:SRSeqAllindicesDifferent}). 

\subsection{O-regularly varying  sequences}

In 1973, R. Bojani\'{c} and E.~Seneta~\cite{BojanicSeneta} show that, under a suitable adaptation, one may consider
regularly varying sequences satisfying similar properties to the ones of regularly varying functions. Even if all the results in this subsection, except the last one, were shown by R.~Bojani\'{c} and E.~Seneta, we refer to~\cite{BingGoldTeug89} for the proofs, as in the previous sections. This notion may be too restrictive,  we need to consider  O-regular variation for sequences, stated by S.~Aljan\v{c}i\'{c}
in 1981 and detailed by D.~Djur\v{c}i\'{c} and V.~Bo\v{z}in~\cite{DjurcicBozin} in 1997. Until the end of the  section: \parb

\centerline{\textbf{$\ba=(a_p)_{p\in\N}$ and $\bb=(b_p)_{p\in\N}$ are sequences of positive real numbers.}}\parb

The sequence $\ba$ is said to be {\it regularly varying} if
$\lim_{p\to\oo} a_{\lfloor \lambda p\rfloor} /a_p \in (0,\oo)$,
for every $\lambda\in(0,\oo)$ and {\it O-regularly varying} if
$ \limsup_{p\to\oo} a_{\lfloor \lambda p\rfloor} /a_p <\oo$,
for every $\lambda\in(0,\oo)$.\parb

Regularly and O-regularly varying sequences are embeddable as regularly and, respectively, O-regularly varying step functions.

\begin{theo}[\cite{BingGoldTeug89}, Th.\ 1.9.5, \cite{DjurcicBozin}, Th.\ 1]\label{th:embed.RVS.ORSV}
Let $\ba$ be as above and $f_{\ba} (x):=a_{\lfloor x\rfloor}$ for $x\geq 1$. Then
\begin{enumerate}[(i)]
\item  $\ba$ is regularly varying if and only if the function $f_{\ba}$ is regularly varying.
 \item  $\ba$ is O-regularly varying if and only if the function $f_{\ba}$ is O-regularly varying.
\end{enumerate}
Hence, if  $\ba$ is regularly varying, then it is O-regularly varying.
\end{theo}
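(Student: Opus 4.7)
The plan is to prove each equivalence by splitting into an easy direction and a nontrivial one. The easy direction in both (i) and (ii) is the implication ``function $\Rightarrow$ sequence'': specializing $x$ to integers yields $f_{\ba}(\lambda p)/f_{\ba}(p) = a_{\lfloor \lambda p \rfloor}/a_p$, so the corresponding regular-variation (resp.\ O-regular-variation) property of $\ba$ is inherited from $f_{\ba}$ by restriction.

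For the converse, I fix $\lambda\in(0,\oo)$ and $x\ge 1$, set $p:=\lfloor x\rfloor$, and introduce the auxiliary scaling parameter $\mu_x:=\lfloor \lambda x\rfloor / p$, which satisfies $\lfloor \mu_x\, p\rfloor = \lfloor \lambda x\rfloor$ by construction. This is the crucial bookkeeping identity: it rewrites
\[
\frac{f_{\ba}(\lambda x)}{f_{\ba}(x)} = \frac{a_{\lfloor \mu_x p\rfloor}}{a_p}
\]
as an instance of the sequence ratio, with $p$ integer and $p\to\oo$ as $x\to\oo$. The elementary bounds $(\lambda x-1)/\lfloor x\rfloor \le \mu_x \le \lambda x/\lfloor x\rfloor$ ensure $\mu_x\to\lambda$, so $\mu_x$ lies in any prescribed compact neighborhood of $\lambda$ once $x$ is large enough.

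To finish I invoke the uniform convergence / local boundedness principle for sequences --- the Bojani\'c-Seneta theorem for regularly varying sequences in case (i), and the corresponding uniform boundedness statement of Djur\v{c}i\'c-Bo\v{z}in for O-regularly varying sequences in case (ii). Granting these, for (i) the uniformity on compact neighborhoods of $\lambda$ together with the continuity of the limit $g(\mu)=\lim_p a_{\lfloor \mu p\rfloor}/a_p$ combines with $\mu_x\to\lambda$ to yield $f_{\ba}(\lambda x)/f_{\ba}(x)\to g(\lambda)\in(0,\oo)$, so $f_{\ba}$ is regularly varying with the same index as $\ba$; for (ii) local boundedness of $\limsup_{p\to\oo} a_{\lfloor \mu p\rfloor}/a_p$ near $\lambda$ gives $\limsup_{x\to\oo} f_{\ba}(\lambda x)/f_{\ba}(x)<\oo$, so $f_{\ba}\in ORV$. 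The final assertion (regularly varying implies O-regularly varying) is immediate from the definitions since a finite positive limit entails a finite limsup.

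The main obstacle is precisely the passage from pointwise-in-$\mu$ behavior to uniform-on-compacts behavior: without it, the ratio $a_{\lfloor \mu_x p\rfloor}/a_p$ with $\mu_x$ a \emph{moving} parameter could not be controlled by the hypothesis, which only gives information for each fixed $\mu$. This uniformity is the nontrivial analytic core of the cited references and enters my argument as a black box.
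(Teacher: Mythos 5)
The paper itself contains no proof of Theorem~\ref{th:embed.RVS.ORSV}: it is imported wholesale from \cite[Th.\ 1.9.5]{BingGoldTeug89} and \cite[Th.\ 1]{DjurcicBozin}. So the comparison to make is whether your argument is a genuine proof or a repackaging of the citation. Your elementary steps are correct: the direction ``$f_{\ba}$ regularly (O-regularly) varying $\Rightarrow$ $\ba$ regularly (O-regularly) varying'' by restricting to integer arguments, the bookkeeping identity $f_{\ba}(\lambda x)/f_{\ba}(x)=a_{\lfloor \mu_x p\rfloor}/a_p$ with $p=\lfloor x\rfloor$, $\mu_x=\lfloor\lambda x\rfloor/p\to\lambda$, and the closing remark that a finite positive limit forces a finite $\limsup$.

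The gap is in the converse, which is the entire content of the theorem. The ``uniform convergence / local boundedness principle for sequences'' you invoke as a black box is not an auxiliary tool: granting the standard function-level uniform convergence and boundedness theorems, the embedding yields the sequence-level uniformity by restriction, and your computation gives the reverse implication, so the black box is essentially equivalent to the statement being proved. Worse, in the logical order of both the sources and this paper (which states, immediately after the theorem, that the Uniform Convergence Theorem for O-regularly varying sequences, \cite[Th.\ 2]{DjurcicBozin}, is obtained \emph{from} this embedding result), the sequence-level uniform statements are consequences of Theorem~\ref{th:embed.RVS.ORSV}; quoting them here is therefore circular unless you verify that the references establish them independently of the embedding. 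Note also that no monotonicity of $\ba$ is assumed: if $\ba$ were nondecreasing (as are the quotient sequences the paper actually applies this to), the ORV half would be cheap, since eventually $\mu_x\le 2\lambda$ and $a_{\lfloor\mu_x p\rfloor}\le a_{\lfloor 2\lambda p\rfloor}$; but for arbitrary positive sequences there is no such shortcut, and the moving-index control (or, alternatively, a direct representation of the sequence as in \cite[Th.\ 3]{DjurcicBozin}) is exactly the analytic work a self-contained proof must supply. As it stands, your proposal reduces the theorem to the cited literature --- which is what the paper does by citation --- rather than proving it.
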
 

Regarding O-regular variation, from this result, we obtain the Uniform Convergence Theorem, see \cite[Th.\ 2]{DjurcicBozin} and the Representation Theorem, specially interesting in the construction of pathological examples,  as in Remark~\ref{rema:SRSeqAllindicesDifferent} and Section~\ref{counterexample}.

\begin{theo}[\cite{DjurcicBozin},\ Th.\ 3, Representation Theorem for O-regularly varying sequences]\label{th:rep.ORVS}
Let $\ba$ be an O-regularly varying sequence. Then there exist bounded sequences of real numbers $(d_p)_{p\in\N}$ and $(\xi_p)_{p\in\N}$ such that
$$a_p=\exp  \left(d_p + \sum^{p}_{j=1}\frac{\xi_j}{j}\right),\quad p\in\N.$$
Conversely, such a representation for a sequence $(a_p)_{p\in\N}$ implies that it is O-regularly varying.
\end{theo}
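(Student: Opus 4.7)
My plan is to reduce to the analogous representation theorem for O-regularly varying \emph{functions}, which is the classical counterpart of Karamata's representation for regularly varying functions, and then discretize. The bridge between the discrete and continuous settings is already provided by Theorem~\ref{th:embed.RVS.ORSV}: the sequence $\ba$ is O-regularly varying if and only if the step function $f_{\ba}(x)=a_{\lfloor x\rfloor}$ is O-regularly varying on $[1,\infty)$.

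Assume first that $\ba$ is O-regularly varying. By Theorem~\ref{th:embed.RVS.ORSV}, $f_{\ba}$ is ORV, so by the representation theorem for ORV functions (the Bingham--Goldie--Teugels analogue of Karamata's theorem) there exist bounded measurable functions $\eta$ and $\varepsilon$ on $[1,\infty)$ such that
\begin{equation*}
f_{\ba}(x)=\exp\Bigl(\eta(x)+\int_1^x\frac{\varepsilon(u)}{u}\,du\Bigr),\qquad x\ge 1.
\end{equation*}
Setting $x=p\in\N$ gives $a_p=\exp(\eta(p)+\int_1^p \varepsilon(u)/u\,du)$. To discretize the integral, I define
\begin{equation*}
\xi_j:=j\int_j^{j+1}\frac{\varepsilon(u)}{u}\,du,\qquad j\in\N.
\end{equation*}
Since $|\varepsilon(u)|\le M$ and $j\int_j^{j+1}du/u=j\log((j+1)/j)\le 1$, the sequence $(\xi_j)$ is bounded, and by construction $\int_1^p \varepsilon(u)/u\,du=\sum_{j=1}^{p-1}\xi_j/j$. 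Putting $d_p:=\eta(p)-\xi_p/p$, which is bounded, yields the desired representation $a_p=\exp(d_p+\sum_{j=1}^p\xi_j/j)$.

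For the converse, given such a representation with $|d_p|\le D$ and $|\xi_j|\le\Xi$, I estimate directly. For $\lambda\ge 1$ and $p$ large,
\begin{equation*}
\frac{a_{\lfloor\lambda p\rfloor}}{a_p}=\exp\Bigl(d_{\lfloor\lambda p\rfloor}-d_p+\sum_{j=p+1}^{\lfloor\lambda p\rfloor}\frac{\xi_j}{j}\Bigr),
\end{equation*}
and the sum is bounded in modulus by $\Xi\sum_{j=p+1}^{\lfloor\lambda p\rfloor}1/j\le \Xi(\log\lambda+o(1))$. Hence $\limsup_{p\to\infty}a_{\lfloor\lambda p\rfloor}/a_p<\infty$; a symmetric argument handles $0<\lambda<1$, establishing O-regular variation.

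The main obstacle is not so much the discretization, which is a routine telescoping argument, but ensuring the cited representation theorem for ORV functions is available (and citing it precisely). One must also be slightly careful that the particular choice of $\xi_j$ preserves boundedness uniformly — this is where using $\int_j^{j+1} du/u$ rather than $1/j$ as the weight avoids introducing an unbounded error term. If one preferred to avoid invoking the continuous representation theorem, an alternative would be to construct $\xi_j$ and $d_p$ directly from the Matuszewska-index characterization and the almost-monotone representation in Theorem~\ref{th:almostmonotonerep.alpha.beta}, mimicking Karamata's original Abel-summation argument; but going through the function version via Theorem~\ref{th:embed.RVS.ORSV} is considerably shorter.
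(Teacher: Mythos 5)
Your argument is correct. Note that the paper does not prove this statement at all: it is quoted verbatim from Djur\v{c}i\'{c}--Bo\v{z}in (their Theorem 3), so there is no internal proof to compare against. Your route -- pass to the step function via Theorem~\ref{th:embed.RVS.ORSV}, invoke the representation theorem for O-regularly varying \emph{functions} (Bingham--Goldie--Teugels, Th.\ 2.2.7), and discretize with $\xi_j:=j\int_j^{j+1}\varepsilon(u)u^{-1}\,du$ so that the integral telescopes and boundedness is preserved -- is the natural one and is in the same spirit as the cited source; the converse estimate correctly yields $\limsup_{p\to\infty}a_{\lfloor\lambda p\rfloor}/a_p<\infty$ for every $\lambda\in(0,\infty)$, which is exactly the paper's definition of an O-regularly varying sequence. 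The only cosmetic point is that the continuous representation may only be valid on some $[a,\infty)$ with $a>1$, but the finitely many remaining indices are absorbed into the bounded sequence $(d_p)_p$.
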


To the best of our knowledge, the notions of orders and Matuszewska
indices for sequences have not been considered. Only in the paper by D.~Djur\v{c}i\'{c} and V.~Bo\v{z}in~\cite{DjurcicBozin} one can find relevant information from which some of our statements concerning this topic can be inferred. In this subsection, a possible formalization of these concepts, based on Theorem~\ref{th:embed.RVS.ORSV}, is proposed, providing a simple description, analyzing their behavior under elementary sequence transformations and showing some stability properties. \para

For $\ba$ as above, we define its \textit{upper Matuszewska index $\a(\ba)$}, its \textit{lower Matuszewska index $\b(\ba)$}, its \textit{upper order $\ro(\ba)$}   and its \textit{lower order $\mu(\ba)$} by
$$\a(\ba):=\a(f_{\ba}),\quad \b(\ba):=\b(f_{\ba}),\quad \ro(\ba):=\ro(f_{\ba}), \quad \mu(\ba):=\mu(f_{\ba}), $$
where $f_{\ba}(x)=a_{\lfloor x\rfloor}$ for all $x\geq1$.

\begin{remark}\label{rema.Ch.Th.ORVSEq}
By Remark~\ref{rema:ordersROMU}, it immediately follows that $\b(\ba)\leq\mu(\ba)\leq\ro(\ba)\leq \a(\ba)$
and, using Theorems~\ref{th:Ch.ORVF} and \ref{th:embed.RVS.ORSV}, we see that $\ba$ is O-regularly varying if and only if $\b(\ba)>-\oo$ and $\a(\ba)<\oo$.\para

If the sequence $\ba$ is regularly varying of index $\o\in\R$, by Theorem~\ref{th:embed.RVS.ORSV} and Remark~\ref{rema:ordersROMU}, we deduce that 
$\b(\ba)=\mu(\ba)=\ro(\ba)=\a(\ba)=\o$.
However, the equality of the indices does not imply regular variation.
\end{remark}

A sequence $\ba$ is said to be {\itshape almost increasing,}  if there exists some $M>0$ such that $ a_p \le M a_q$ for all $p,q\in\N$ with $p\leq q$. Analogously, $\ba$ is {\itshape almost decreasing}  if there exists some $m>0$ such that $m a_q\le a_p$ for all $p,q\in\N$ with $p\leq q$.
Thanks to these definitions and using that
$\lfloor x\rfloor\leq  x \leq 2 \lfloor x\rfloor$ for all $x\geq 1$, it is possible to skip the step function $f_{\ba}$ and give a simple characterization of the indices and orders only in terms of the sequence $\ba$. 

\begin{proposition}\label{prop.nice.def.Mat.ind.ord.seq}
Let $\ba$ be as above. We have that
\begin{align*}
\a(\ba)=&\inf\{\a\in\R;  \, (a_p/p^\a)_{p\in\N}\,\, \text{is almost decreasing}\}, \quad &\ro(\ba)=\limsup_{p\to\oo}\frac{\log(a_p)}{\log(p)},\\
\b(\ba)=&\sup\{\b\in\R;  \, (a_p/p^\b)_{p\in\N}\,\, \text{is almost increasing}\}
,\quad &\mu(\ba)=\liminf_{p\to\oo}\frac{\log(a_p)}{\log(p)}.
\end{align*}
\end{proposition}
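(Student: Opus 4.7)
The plan is to work directly from the definitions $\a(\ba)=\a(f_{\ba})$, $\b(\ba)=\b(f_{\ba})$, $\ro(\ba)=\ro(f_{\ba})$, $\mu(\ba)=\mu(f_{\ba})$ and show that for the step function $f_{\ba}(x)=a_{\lfloor x\rfloor}$, the function-theoretic characterizations given by Theorem~\ref{th:almostmonotonerep.alpha.beta} (for $\a$ and $\b$) and Remark~\ref{rema:ordersROMU} (for $\ro$ and $\mu$) reduce to the sequence-theoretic ones. The bridge in both cases is the elementary estimate $\lfloor x\rfloor\le x\le 2\lfloor x\rfloor$ valid for $x\ge 1$, together with the resulting bound $\log x-\log\lfloor x\rfloor=o(\log x)$ as $x\to\infty$.

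For the orders I would argue first. Writing $p=\lfloor x\rfloor$, we have $\log f_{\ba}(x)/\log x=(\log a_p/\log p)(\log p/\log x)$ for $x\ge 2$, and the last factor tends to $1$ uniformly in $x\in[p,p+1)$ as $p\to\infty$. Hence $\limsup_{x\to\infty}\log f_{\ba}(x)/\log x$ and $\limsup_{p\to\infty}\log a_p/\log p$ coincide, and the same for $\liminf$. Combined with Remark~\ref{rema:ordersROMU}, this gives the claimed formulas for $\ro(\ba)$ and $\mu(\ba)$.

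For the Matuszewska indices the plan is to prove, for each fixed $\a\in\R$, the equivalence
\[
\text{$x\mapsto f_{\ba}(x)/x^{\a}$ is almost decreasing on $[1,\infty)\iff(a_p/p^{\a})_{p\in\N}$ is almost decreasing,}
\]
and analogously for almost increasing with $\b$ in place of $\a$. The forward direction is immediate by restricting $x$ to integers. For the converse, given $1\le x\le y$ set $p=\lfloor x\rfloor$ and $q=\lfloor y\rfloor$, note $p\le q$ (so the sequence hypothesis applies), and convert using $p^{\a}$ and $x^{\a}$: the ratio $x^{\a}/p^{\a}$ is bounded above and below by $1$ and $2^{|\a|}$ (or their reciprocals, depending on the sign of $\a$), and the same for $y$ and $q$. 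This changes the almost-monotonicity constant by a factor of $2^{|\a|}$ but preserves the property. Inserting these equivalences into Theorem~\ref{th:almostmonotonerep.alpha.beta} applied to $f_{\ba}$ yields the sequence-level infimum/supremum formulas for $\a(\ba)$ and $\b(\ba)$.

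There is no real obstacle; the only thing worth being careful about is handling the sign of $\a$ (and $\b$) when comparing $x^{\a}$ with $\lfloor x\rfloor^{\a}$, and the mild bookkeeping to ensure the set of admissible exponents is unchanged when almost-monotonicity constants are rescaled by a factor depending on $\a$. Both are handled uniformly by the bound $\lfloor x\rfloor\le x\le 2\lfloor x\rfloor$, and none of this changes the infimum or supremum defining the indices.
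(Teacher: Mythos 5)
Your argument is correct and coincides with the route the paper itself (only sketches) takes: the indices are defined via the step function $f_{\ba}(x)=a_{\lfloor x\rfloor}$, and the bound $\lfloor x\rfloor\le x\le 2\lfloor x\rfloor$ transfers the almost-monotonicity characterization of Theorem~\ref{th:almostmonotonerep.alpha.beta} and the $\limsup/\liminf$ definitions of the orders from $f_{\ba}$ to the sequence. The sign bookkeeping for $x^{\a}$ versus $\lfloor x\rfloor^{\a}$ and the harmless rescaling of the almost-monotonicity constants are handled exactly as the paper intends, so nothing is missing.
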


When applying ramification arguments in the classes of functions defined in terms of a given weight sequence $\M$, transformations of this sequence will appear. Using this last characterization result, the indices for the transforms and for the original sequence can be compared as indicated below.

\begin{proposition}\label{prop.roots.gevrey.multiply.Mat.ind.order}
 Let $\ba$ be a sequence  of positive numbers. For any $r\in\R$ and $s>0$, we have that
 $$\a(\ba^{s})=s \a(\ba),\quad \b(\ba^{s})=s \b(\ba),\quad \ro(\ba^{s})=s \ro(\ba), \quad \mu(\ba^{s})=s \mu(\ba), $$
 where $\ba^s:= (a^s_p )_{p\in\N}$, and we also obtain that
  $$\a(\bg_r \cdot\ba)=r +\a(\ba),\quad \b(\bg_r \cdot\ba)=r+ \b(\ba),\quad \ro(\bg_r \cdot\ba)=r+ \ro(\ba), \quad \mu(\bg_r \cdot\ba)=r +\mu(\ba), $$
 where $\bg_r:=(p^r)_{p\in\N}$ and $\bg_r \cdot\ba= ( p^r a_p)_{p\in\N}$.%
\end{proposition}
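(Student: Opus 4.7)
My plan is to reduce the proposition to the analogous function-level statements already recorded in Remark~\ref{rema.properties.Matuszewska.indices}. Since $\a(\ba),\b(\ba),\ro(\ba),\mu(\ba)$ are defined as the corresponding indices and orders of the step function $f_{\ba}(x) = a_{\lfloor x\rfloor}$ on $[1,\oo)$, it suffices to check how $f$ transforms under the two operations and then invoke the function result.

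For the power $\ba^s$ with $s>0$, the definition of the step function gives the clean identity
\[
f_{\ba^s}(x) = a_{\lfloor x\rfloor}^{\,s} = f_{\ba}(x)^{s},
\]
so $f_{\ba^s}$ is literally the $s$-th power of $f_{\ba}$. The second part of Remark~\ref{rema.properties.Matuszewska.indices} then yields at once
$\a(f_{\ba}^{s})=s\a(f_{\ba})$ and likewise for $\b$, $\ro$, $\mu$, which is the first row of identities. No case distinction is needed here since $s>0$.

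For the shifted sequence $\bg_r\cdot\ba$ the step function is $f_{\bg_r\cdot\ba}(x)=\lfloor x\rfloor^{r}f_{\ba}(x)$, which is not exactly $p_r(x)f_{\ba}(x)=x^{r}f_{\ba}(x)$. The plan is to note that on $[1,\oo)$ one has $x/2 \le \lfloor x\rfloor \le x$, hence $\lfloor x\rfloor^{r}$ and $x^{r}$ differ by bounded multiplicative constants (the cases $r\ge0$ and $r<0$ are handled by the same bound after taking the appropriate power). Consequently $f_{\bg_r\cdot\ba}$ and $p_r\cdot f_{\ba}$ satisfy the equivalence~\eqref{eq:equiv.functionsORV}, and by the first part of Remark~\ref{rema.properties.Matuszewska.indices} they share all four indices and orders. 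Applying the remark's formula $\a(f\cdot p_r)=r+\a(f)$ (and the analogous identities for $\b$, $\ro$, $\mu$) to $f_{\ba}$ yields the second row.

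I do not expect any real obstacle: everything follows formally from Remark~\ref{rema.properties.Matuszewska.indices} once the step-function identities $f_{\ba^{s}}=f_{\ba}^{s}$ and $f_{\bg_r\cdot\ba}\asymp p_r\cdot f_{\ba}$ are noted. If one preferred a self-contained proof avoiding the passage through $f_{\ba}$, the alternative is to argue directly from Proposition~\ref{prop.nice.def.Mat.ind.ord.seq}: for $\a$ and $\b$ one uses that $(a_p/p^{\alpha})$ is almost decreasing iff $(a_p^{s}/p^{\alpha s})$ is almost decreasing (since raising to a positive power preserves almost monotonicity and merely rescales the constant), and iff $(p^r a_p/p^{\alpha+r})$ is almost decreasing; for $\ro$ and $\mu$ one takes $\log$ and uses linearity of $\limsup$ and $\liminf$ under addition of a scalar, together with the factor $s$ coming out of $\log(a_p^s)=s\log a_p$.
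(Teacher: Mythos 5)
Your proposal is correct. The paper itself gives no written-out proof: it states the proposition immediately after Proposition~\ref{prop.nice.def.Mat.ind.ord.seq} with the phrase ``Using this last characterization result\dots'', i.e.\ the intended argument is exactly the one you sketch in your closing paragraph: from the almost-monotonicity description, $(p^r a_p)/p^{\alpha+r}=a_p/p^{\alpha}$ identically and raising to the power $s>0$ preserves almost monotonicity while rescaling the exponent set by $s$; for $\ro$ and $\mu$ the $\limsup/\liminf$ formulas $\log(p^ra_p)/\log p=r+\log(a_p)/\log p$ and $\log(a_p^s)/\log p=s\log(a_p)/\log p$ finish the job. Your primary route instead goes back through the defining step function $f_{\ba}$ and the function-level Remark~\ref{rema.properties.Matuszewska.indices}; this is equally valid, and the identity $f_{\ba^s}=f_{\ba}^{\,s}$ makes the power case immediate, but for $\bg_r\cdot\ba$ you need the extra (easy) step that $\lfloor x\rfloor^{r}\asymp x^{r}$ on $[1,\infty)$ so that $f_{\bg_r\cdot\ba}$ and $p_r\cdot f_{\ba}$ satisfy \eqref{eq:equiv.functionsORV} and hence share all indices -- a wrinkle the sequence-level route avoids entirely, since there the floor function never appears. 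In short, both arguments are sound; the paper's (and your alternative) is marginally more economical, while yours makes the reduction to the function theory completely explicit.
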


One may notice the stability of  the orders, the Matuszewska indices and the notion of O-regular variation for sequences under $\simeq$. 

\begin{lemma}\label{lemma:strong.equiv.seq.Mat.ind.order}
Let $\ba=(a_p)_{p\in\N}$ and $\bb=(b_p)_{p\in\N}$ be  as above with $\ba\simeq \bb$. Then, we see that 
$$\a(\ba)=\a(\bb),\quad \b(\ba)=\b(\bb),\quad \ro(\ba)=\ro(\bb), \quad \mu(\ba)=\mu(\bb).$$
Hence $\ba$ is O-regularly varying if and only if $\bb$ also is.
\end{lemma}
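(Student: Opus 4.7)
The plan is to reduce everything to the already-established function-level result in Remark~\ref{rema.properties.Matuszewska.indices} via the embedding $\ba \mapsto f_{\ba}$ of Theorem~\ref{th:embed.RVS.ORSV}. The hypothesis $\ba \simeq \bb$ says exactly that there is $c \geq 1$ with $c^{-1} b_p \leq a_p \leq c\, b_p$ for every $p \in \N$. Since $f_{\ba}(x) = a_{\lfloor x \rfloor}$ and $f_{\bb}(x) = b_{\lfloor x \rfloor}$, this transfers verbatim to
$$c^{-1} f_{\bb}(x) \leq f_{\ba}(x) \leq c\, f_{\bb}(x), \qquad x \geq 1,$$
which obviously implies the weaker two-sided bound \eqref{eq:equiv.functionsORV} satisfied by $f_{\ba}$ and $f_{\bb}$.

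I would then invoke Remark~\ref{rema.properties.Matuszewska.indices}, which asserts that whenever two measurable positive functions on $[A,\infty)$ satisfy \eqref{eq:equiv.functionsORV}, their lower and upper Matuszewska indices as well as their lower and upper orders coincide. Applied to $f_{\ba}$ and $f_{\bb}$, this gives $\a(f_{\ba}) = \a(f_{\bb})$, $\b(f_{\ba}) = \b(f_{\bb})$, $\ro(f_{\ba}) = \ro(f_{\bb})$, and $\mu(f_{\ba}) = \mu(f_{\bb})$. By the very definition $\a(\ba) := \a(f_{\ba})$ and analogously for the other three indices, the claimed four equalities for the sequences follow immediately.

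For the final assertion, apply Remark~\ref{rema.Ch.Th.ORVSEq}: $\ba$ is O-regularly varying if and only if both $\b(\ba) > -\infty$ and $\a(\ba) < \infty$, and the same characterization holds for $\bb$. Since the two pairs of indices agree by what we just showed, the two O-regular variation conditions are equivalent.

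There is essentially no real obstacle here; the only thing to verify carefully is that the strong sequence equivalence $\simeq$ really does pull back to the functional equivalence \eqref{eq:equiv.functionsORV} for the step-function extensions, which is immediate from the definition of $f_{\ba}$. Alternatively, one could bypass the functional detour and argue directly from Proposition~\ref{prop.nice.def.Mat.ind.ord.seq}: if $(a_p/p^\a)_{p\in\N}$ is almost decreasing with constant $M$ and $b_p \leq c\, a_p$, $a_p \leq c\, b_p$, then $(b_p/p^\a)_{p\in\N}$ is almost decreasing with constant $c^2 M$, and analogously for almost increasing sequences and for the lim sup/lim inf defining $\ro$ and $\mu$. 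Either route is routine.
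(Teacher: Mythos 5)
Your argument is correct and is essentially the proof the paper intends: since $\a(\ba)$, $\b(\ba)$, $\ro(\ba)$, $\mu(\ba)$ are defined through the step function $f_{\ba}$, the bound $c^{-1}f_{\bb}\le f_{\ba}\le c\,f_{\bb}$ gives \eqref{eq:equiv.functionsORV}, Remark~\ref{rema.properties.Matuszewska.indices} yields the four equalities, and Remark~\ref{rema.Ch.Th.ORVSEq} gives the O-regular variation equivalence. Your alternative route via Proposition~\ref{prop.nice.def.Mat.ind.ord.seq} is also fine but not needed.
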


In the next subsection, these results will be applied for the sequence of quotients $\m=(m_{p-1})_{p\in\N}$ of $\M$, then the stability of those indices under $\simeq$ is a first approach. However, in the context of ultraholomorphic and ultradifferentiable classes, it is always possible to switch $\M$ for an equivalent sequence under $\approx$ and so, the appropriate question is the stability for this weaker relation. A partial but sufficient solution is given at the end of the current section (see Corollary~\ref{coro:stability.gamma.weak.equiv} and Remark~\ref{rema:stabilityOmM}). \para

\begin{remark}\label{remark:shift.seq.Mat.ind.order}
Since there is not a uniform definition of the sequence of quotients, one may alternatively consider the shifted sequence $\bs_{\bm}=(m_p)_{p\in\N}$, as some authors~\cite{BonetMeiseMelikhov07,Komatsu73,Petzsche88,PetzscheVogt,SchmetsValdivia00} have done. We see below that  both approaches are equivalent in this context and one can switch from $\m$ to $\bs_{\m}$ when needed.\para

For
$\a\geq 0$ we observe that $p^\a\leq (p+1)^\a\leq 2^\a p^\a$, so for $\ga\in\R$ we deduce that $(a_p p^{\ga})_{p\in\N}$ is almost increasing (resp. almost decreasing) if and only if $(a_{p+1} p^{\ga})_{p\in\N}$ is almost increasing (resp. almost decreasing). 
Hence, even if $\ba=(a_{p})_{p\in\N}$ and the corresponding shifted sequence $\bs_{\ba}:=(a_{p+1})_{p\in\N}$ are not equivalent for $\simeq$,
we see that
$$\a(\ba)=\a({\bs}_{\ba}),\quad \b(\ba)=\b({\bs}_{\ba}),\quad \ro(\ba)=\ro({\bs}_{\ba}), \quad \mu(\ba)=\mu({\bs}_{\ba}).$$
Consequently, by Remark~\ref{rema.Ch.Th.ORVSEq}, $\ba$ is O-regularly varying if and only if ${\bs}_{\ba}$ also is. 
\end{remark}

\subsection{Orders, Matuszewska indices and growth indices} 
In this subsection, we will see that the indices $\ga(\M)$ and $\o(\M)$ can be expressed in terms of the lower Matuszewska index $\b(\m)$ and the lower order $\mu(\m)$, respectively. First, we obtain the central connection between logarithmic convexity and O-regular variation which is the analogous version of Lemma~\ref{lemma:beta.alpha.monotone.function}.

\begin{lemma}\label{lemma:lc.ORVSeq}  Let $\M$ be a sequence of positive real numbers with sequence of quotients $\m=(m_{p-1})_{p\in\N}$. For any $\ga\in \R$, we have that
\begin{enumerate}[(i)]
\item If there exists $\bt=(t_p)_{p\in\N_0}$ nondecreasing such that $((p+1)^{-\ga} m_p)_{p\in\N_0}\simeq \bt$, then $\b(\m)\geq \ga$. Hence, if $\M$ is \hyperlink{lc}{(lc)}, then $0\leq \b(\m)\leq\mu(\m)\leq\ro(\m)\leq \a(\m)$.
\item If $\b(\m)>\ga$, then there exists $\bt=(t_p)_{p\in\N_0}$ nondecreasing such that $((p+1)^{-\ga} m_p)_{p\in\N_0}\simeq \bt$.
\end{enumerate}
\end{lemma}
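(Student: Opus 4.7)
The plan is to reduce both parts to the almost-monotonicity characterization of $\b$ from Proposition~\ref{prop.nice.def.Mat.ind.ord.seq}, using the shift identities of Remark~\ref{remark:shift.seq.Mat.ind.order} to move freely between $((p+1)^{-\ga}m_p)_{p\in\N_0}$ and $(p^{-\ga}m_{p-1})_{p\in\N}$. The only genuinely new content is the construction of the sequence $\bt$ in (ii).

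For part (i), if $((p+1)^{-\ga}m_p)_{p\in\N_0}\simeq\bt$ with $\bt$ nondecreasing, then $((p+1)^{-\ga}m_p)_{p\in\N_0}$ is almost increasing with constant controlled by the $\simeq$-constant, since a nondecreasing sequence is automatically almost increasing (with constant $1$). By Remark~\ref{remark:shift.seq.Mat.ind.order}, this is equivalent to $(p^{-\ga}m_{p-1})_{p\in\N}$ being almost increasing, and Proposition~\ref{prop.nice.def.Mat.ind.ord.seq} then gives $\b(\m)\ge \ga$. For the (lc) addendum, Lemma~\ref{lemma.basic.properties.seq}(i) ensures $\m$ itself is nondecreasing, so taking $\ga=0$ and $\bt=\m$ yields $\b(\m)\ge 0$; the chain $0\le\b(\m)\le\mu(\m)\le\ro(\m)\le\a(\m)$ then follows from Remark~\ref{rema.Ch.Th.ORVSEq}.

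For part (ii), suppose $\b(\m)>\ga$ and fix any $\ga'\in(\ga,\b(\m))$. By Proposition~\ref{prop.nice.def.Mat.ind.ord.seq} combined with Remark~\ref{remark:shift.seq.Mat.ind.order}, the sequence $((p+1)^{-\ga'}m_p)_{p\in\N_0}$ is almost increasing, so there is $M\ge 1$ with $(k+1)^{-\ga'}m_k\le M(p+1)^{-\ga'}m_p$ whenever $k\le p$. Define the running maximum
$$ t_p:=\max_{0\le k\le p}(k+1)^{-\ga}m_k,\qquad p\in\N_0, $$
which is nondecreasing by construction and trivially satisfies $(p+1)^{-\ga}m_p\le t_p$. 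For the reverse comparison, write $t_p=(k_0+1)^{-\ga}m_{k_0}$ for some $k_0\le p$ and estimate
$$ t_p=(k_0+1)^{\ga'-\ga}\cdot(k_0+1)^{-\ga'}m_{k_0}\le M(k_0+1)^{\ga'-\ga}(p+1)^{-\ga'}m_p\le M(p+1)^{-\ga}m_p, $$
where the last inequality uses $\ga'-\ga>0$ together with $k_0\le p$. Hence $((p+1)^{-\ga}m_p)_{p\in\N_0}\simeq\bt$, as required.

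The only delicate step is the construction in (ii): the positivity of the gap $\ga'-\ga$ is what makes the running maximum comparable to the original sequence, and without the strict inequality $\b(\m)>\ga$ the bound $(k_0+1)^{\ga'-\ga}\le(p+1)^{\ga'-\ga}$ would collapse. Everything else is a transparent application of the shift relations and the almost-monotone characterization of the lower Matuszewska index.
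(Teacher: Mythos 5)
Your proof is correct. The paper states this lemma without a written proof (it is presented as the sequence analogue of Lemma~\ref{lemma:beta.alpha.monotone.function}, to be read off from Proposition~\ref{prop.nice.def.Mat.ind.ord.seq}), and your argument is exactly the intended route: part (i) is the observation that equivalence to a nondecreasing sequence gives almost monotonicity, and part (ii) requires an explicit construction. Your running maximum $t_p=\max_{0\le k\le p}(k+1)^{-\ga}m_k$ is a minor variant of the construction the paper uses in the analogous step of Theorem~\ref{th:main.beta.sequences} (there, (ii)~$\Rightarrow$~(iii), via a tail infimum $p^{\b+\ep}\inf_{q\ge p}q^{-(\b+\ep)}m_q$); both exploit the strictly positive gap in the exponent in the same way, so nothing substantive is different. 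Two cosmetic remarks: the sequences $((p+1)^{-\ga}m_p)_{p\in\N_0}$ and $(p^{-\ga}m_{p-1})_{p\in\N}$ are literally the same after reindexing, so the appeal to Remark~\ref{remark:shift.seq.Mat.ind.order} is not needed; and in (ii), passing from $\ga'<\b(\m)$ to almost monotonicity at the exponent $\ga'$ itself uses that the set in Proposition~\ref{prop.nice.def.Mat.ind.ord.seq} is downward closed (the same $k\le p$ estimate you use later), or one can simply run your construction with an exponent $\b''\in(\ga',\b(\m))$ for which almost monotonicity is directly guaranteed; either way the step is immediate.
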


According to Remark~\ref{remark:shift.seq.Mat.ind.order} and Proposition~\ref{prop.nice.def.Mat.ind.ord.seq}, the lower order $\mu (\m)$ and the growth index $\o(\M)$ coincide for any sequence $\M$. The relation between $\ga(\M)$ and Matuszewska indices can be deduced from the previous result. A weaker version of it, for strongly regular sequences and $\ga>0$ is contained in~\cite[Prop.\ 4.15]{JimenezSanz} where the connection with O-regular variation was unknown.

\begin{theo}\label{th:equality.ga.beta.mu.omega} Let $\M$ be a sequence of positive real numbers with sequence of quotients $\m=(m_{p-1})_{p\in\N}$. Then
 $$\ga(\M)=\b(\m),\qquad \o(\M)=\mu(\m).$$
\end{theo}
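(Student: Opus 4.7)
The plan is to derive both equalities as direct corollaries of Lemma~\ref{lemma:lc.ORVSeq} together with the elementary Proposition~\ref{prop.nice.def.Mat.ind.ord.seq}, so no fresh analysis is required.

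First I would handle $\o(\M)=\mu(\m)$. By Proposition~\ref{prop.nice.def.Mat.ind.ord.seq}, viewing $\m$ as the sequence $(m_{p-1})_{p\in\N}$, one has $\mu(\m)=\liminf_{p\to\oo}\log(m_{p-1})/\log p$; the invariance of the lower order under a one-step index shift, recorded in Remark~\ref{remark:shift.seq.Mat.ind.order}, rewrites this as $\liminf_{p\to\oo}\log(m_p)/\log p=\o(\M)$.

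For $\ga(\M)=\b(\m)$ I would unwrap Lemma~\ref{lemma:lc.ORVSeq}. The upper bound $\ga(\M)\le\b(\m)$ follows from part~(i): given any $\ga$ for which $(P_\ga)$ holds, the witness $\l=(\ell_p)$ satisfies $\m\simeq\l$ and $\bt:=((p+1)^{-\ga}\ell_p)_{p\in\N_0}$ is nondecreasing; by the equivalence $\m\simeq\l$, the sequence $((p+1)^{-\ga}m_p)_{p\in\N_0}$ is itself $\simeq\bt$, which is precisely the hypothesis of Lemma~\ref{lemma:lc.ORVSeq}(i), yielding $\b(\m)\ge\ga$. Passing to the supremum gives $\ga(\M)\le\b(\m)$. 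For the reverse inequality I would pick any $\ga<\b(\m)$ and invoke Lemma~\ref{lemma:lc.ORVSeq}(ii) to produce a nondecreasing $\bt=(t_p)_{p\in\N_0}$ with $((p+1)^{-\ga}m_p)\simeq\bt$; defining $\ell_p:=(p+1)^\ga t_p$ supplies a candidate for $(P_\ga)$, since $((p+1)^{-\ga}\ell_p)=\bt$ is nondecreasing and $\m\simeq\l$ follows by rescaling the equivalence constants. Letting $\ga\uparrow\b(\m)$ closes the argument.

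The extreme values $\b(\m)\in\{-\oo,+\oo\}$ are handled transparently by the conventions of Remark~\ref{rema:inf.sup.empty}: if $\b(\m)=-\oo$ no $\ga$ can satisfy $(P_\ga)$ by part~(i), while if $\b(\m)=+\oo$ part~(ii) makes every $\ga\in\R$ admissible. I do not expect a genuine obstacle, as the analytic substance of the theorem has been absorbed into Lemma~\ref{lemma:lc.ORVSeq} and Remark~\ref{remark:shift.seq.Mat.ind.order}; the only point requiring mild care is keeping track of the two indexings of the quotient sequence that appear in the definitions of $(P_\ga)$ and of $\mu(\m)$.
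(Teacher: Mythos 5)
Your proposal is correct and matches the paper's own route: the paper likewise obtains $\o(\M)=\mu(\m)$ directly from Proposition~\ref{prop.nice.def.Mat.ind.ord.seq} together with the shift-invariance in Remark~\ref{remark:shift.seq.Mat.ind.order}, and deduces $\ga(\M)=\b(\m)$ by translating property $(P_\ga)$ into the two implications of Lemma~\ref{lemma:lc.ORVSeq}, exactly as you do (including the rescaling $\ell_p=(p+1)^\ga t_p$ and the treatment of the extreme values via the conventions of Remark~\ref{rema:inf.sup.empty}).
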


From this main connection, we see that the properties satisfied by $\b$ and $\mu$ appearing in  Remarks~\ref{rema.Ch.Th.ORVSEq} and~\ref{remark:shift.seq.Mat.ind.order}, Proposition~\ref{prop.roots.gevrey.multiply.Mat.ind.order}  and Lemma~\ref{lemma:lc.ORVSeq} also hold for $\gamma(\M),\o(\M)$. Some of these properties were already proved by V. Thilliez and the second author after introducing the corresponding indices. In particular, for $\M$ \hyperlink{lc}{(lc)} this means that $0\leq\ga(\M)\leq\o(\M)\leq \oo$ and we deduce that for \hyperlink{lc}{(lc)} sequences the original definition of $\ga(\M)$ given by V.~Thilliez, where the supremum is taken only for $\ga>0$, coincides with the general one considered in this paper.

\subsection{Main Theorems} Applying Theorems~\ref{th:main.th.alpha} and \ref{th:main.th.beta} to suitable step functions, we obtain similar results for the indices $\b(\m)$ and $\a(\m)$ where $\m=(m_{p-1})_{p\in\N}$ is the sequence of quotients of a weight sequence $\M$, so $\m$ is nondecreasing and tends to infinity, and the same holds for $f_{\m}$ and $f_{\bs_{\m}}$.

\begin{theorem}\label{th:main.beta.sequences}
 Let $\M$ be a weight sequence and $\b\geq 0$. The following are equivalent:  
 \begin{enumerate}[(i)]
\item  there exists $C>0$ such that $\displaystyle \sum^\oo_{k= p} \frac{ (k+1)^{\b-1}}{m_k}\leq C \frac{ (p+1)^\b}{m_p}$ for all  $p\in\N_0$,
\item there exists $\ep>0$ such that $( m_p/p^{\b+\ep} )_{p\in\N}$ is almost increasing,
\item there exists a sequence $\bh\simeq \m$ such that $((p+1)^{-\b}h_p)_{p\in\N_0}$ is nondecreasing and
 $ \displaystyle \inf_{p\geq1} \frac{h_{2p}}{h_p}> 2^{\b},$
\item $\displaystyle \lim_{k\to\oo}\liminf_{p\to\oo} \frac{m_{kp}}{k^{\b} m_p}=\oo$,
\item  there exists $k\in\N$, $k\geq 2$ such that $\displaystyle \liminf_{p\to\oo} \frac{m_{kp}}{m_p}>  k^{\b},$
\item for every $\theta \in (0,1)$ there exists $k\in\N$, $k\ge 2$,  such that $m_p\leq \theta k^{-\b} \, m_{kp}$ for every $p\in\N$,
\item  $\b(\bm)>\b$,
\item $\gamma(\M)>\b$,
\item there exists $C>0$ such that  
$\displaystyle \sum_{k=0}^{p} \frac{m_k}{(k+1)^{1+\b}}\leq C \frac{ m_p}{(p+1)^\b}$ for every $p\in\N_0$.
\end{enumerate}
\end{theorem}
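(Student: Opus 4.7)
The plan is to reduce the statement to Theorem~\ref{th:main.th.beta} applied to a step function built from $\m$, combined with Theorem~\ref{th:equality.ga.beta.mu.omega} and Lemma~\ref{lemma:lc.ORVSeq}. Since $\M$ is a weight sequence, Lemma~\ref{lemma.basic.properties.seq}(ii) yields that $\m=(m_p)_{p\in\N_0}$ is nondecreasing with $\lim_p m_p=\oo$, so the step function $\sigma(x):=m_{\lfloor x\rfloor}$ on $[1,\oo)$, extended by $0$ on $[0,1)$, is nondecreasing with $\lim_{t\to\oo}\sigma(t)=\oo$, and Theorem~\ref{th:main.th.beta} applies with $a=1$. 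By the definition of the Matuszewska indices of a sequence together with Remark~\ref{remark:shift.seq.Mat.ind.order}, one has the key identity $\b(\sigma)=\b(\m)$.

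The equivalence (vii)~$\Leftrightarrow$~(viii) is immediate from Theorem~\ref{th:equality.ga.beta.mu.omega}. The remaining sequence-level conditions (i), (ii), (iv), (v), (vi), (ix) are the discrete translations of, respectively, the function-level conditions (vii), (v), (ii), (ix), (viii), (x) of Theorem~\ref{th:main.th.beta} applied to $\sigma$, each of which is equivalent to condition (iv) of that theorem, namely $\b(\sigma)>\b$. The transfer between sums of the type $\sum k^{\b-1}/\sigma(k)$ and $\sum(k+1)^{\b-1}/m_k$ (and likewise for (ix)) is routine using $k\asymp k+1$ for $k\geq 1$, with the additional constants absorbed; (ii) is the sequential reformulation of (v) of Theorem~\ref{th:main.th.beta} via Proposition~\ref{prop.nice.def.Mat.ind.ord.seq}; and (iv), (v), (vi) translate verbatim from (ii), (ix), (viii) of Theorem~\ref{th:main.th.beta}.

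It remains to incorporate (iii). If $\bh\simeq\m$ is as in (iii), then the strict inequality $\inf_{p\geq 1} h_{2p}/h_p>2^{\b}$ implies $\liminf_{p\to\oo}h_{2p}/h_p>2^{\b}$, which is condition (ix) of Theorem~\ref{th:main.th.beta} with $k=2$ applied to $f_{\bs_\bh}$; hence $\b(\bh)>\b$, and by the $\simeq$-stability of the Matuszewska indices (Lemma~\ref{lemma:strong.equiv.seq.Mat.ind.order}) we conclude $\b(\m)>\b$. Conversely, assuming $\b(\m)>\b$, pick $\b'\in(\b,\b(\m))$; Lemma~\ref{lemma:lc.ORVSeq}(ii) provides a nondecreasing $\bt$ with $((p+1)^{-\b'}m_p)_p\simeq\bt$, and $h_p:=(p+1)^{\b'}t_p$ then yields $\bh\simeq\m$ with $((p+1)^{-\b}h_p)_p=((p+1)^{\b'-\b}t_p)_p$ nondecreasing; since $h_{2p}/h_p=((2p+1)/(p+1))^{\b'}\,t_{2p}/t_p\to 2^{\b'}>2^\b$, the ratio exceeds $2^\b$ by a positive margin for all sufficiently large $p$, and the finitely many initial terms of $\bh$ can be rescaled within the $\simeq$-equivalence class to ensure $\inf_{p\geq 1}h_{2p}/h_p>2^{\b}$ uniformly. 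The main obstacle I foresee is precisely this last bookkeeping: securing a true \emph{infimum} strictly above $2^\b$ rather than merely an asymptotic $\liminf$, which requires spending the gap $\b'-\b$ and exploiting the flexibility of $\simeq$ to adjust finitely many terms; the remaining equivalences amount to a systematic discrete-to-continuous translation via the step function $\sigma$.
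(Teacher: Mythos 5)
Your proposal is correct and follows essentially the same route as the paper: all conditions are funnelled through Theorem~\ref{th:main.th.beta} applied to a step function built from $\m$ (the paper uses the two step functions $t\mapsto m_{\lfloor t\rfloor-1}$ and $t\mapsto m_{\lfloor t\rfloor}$ so that each condition matches verbatim, which differs from your single $\sigma$ plus constant-absorption only cosmetically), together with Theorem~\ref{th:equality.ga.beta.mu.omega} for (vii)$\Leftrightarrow$(viii) and Remark~\ref{remark:shift.seq.Mat.ind.order} for the index shift. The only genuine divergence is the treatment of (iii). For (iii)$\Rightarrow$(vii) the paper argues directly (splitting $k$ into powers of $2$ to reach (iv)), while you apply Theorem~\ref{th:main.th.beta}(ix) to the step function of $\bh$ and then use Lemma~\ref{lemma:strong.equiv.seq.Mat.ind.order}; both are fine. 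In the converse direction the paper does not invoke Lemma~\ref{lemma:lc.ORVSeq}: it proves (ii)$\Rightarrow$(iii) with the explicit choice $h_p:=p^{\b+\ep}\inf_{q\ge p}q^{-(\b+\ep)}m_q$ (and $h_0:=h_1/2^{\b}$), which yields $h_{2p}/h_p\ge 2^{\b+\ep}$ for every $p\ge 1$ at once, so the uniform infimum is immediate. Your $h_p=(p+1)^{\b'}t_p$ only gives the bound asymptotically; note also that $h_{2p}/h_p$ need not converge to $2^{\b'}$ (since $t_{2p}/t_p\ge 1$ may be large), what you actually get is $h_{2p}/h_p\ge\bigl((2p+1)/(p+1)\bigr)^{\b'}\to 2^{\b'}$, so the finite-term repair you flag is really needed. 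It does go through: with $p_0$ such that $h_{2p}/h_p\ge c>2^{\b}$ for $p\ge p_0$, set $h'_p:=\mu^{\,2p_0-p}h_p$ for $p<2p_0$ and $h'_p:=h_p$ otherwise, with $0<\mu<\min(1,2^{-\b})$; then $((p+1)^{-\b}h'_p)_p$ is still nondecreasing (product of two nondecreasing factors), $\bh'\simeq\m$, and on the bad range $h'_{2p}/h'_p=\mu^{-p}h_{2p}/h_p\ge\mu^{-1}>2^{\b}$, while for $p\ge p_0$ the ratio only improves. So your argument closes; the paper's choice of $\bh$ simply buys the uniform bound for free.
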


\begin{proof}
(i)$\Rightarrow$(ii) By Theorem~\ref{th:main.th.beta}[(vii) $\Rightarrow$ (v)] for $\sigma_1 (t)=m_{\lfloor t\rfloor -1}$ for $t\geq 1$ and $\sigma_1(t)=m_0$ for $t\in[0,1]$, there exists $\ep>0$ such that $(m_{p}/ (p+1)^{\b+\ep})_{p\in\N_0}$ is almost increasing.  Hence, by Remark~\ref{remark:shift.seq.Mat.ind.order}, (ii) holds.\para
(ii)$\Rightarrow$(iii)  For every $p\in\N$ we define the sequence $h_p:= p^{\b+\ep} \inf_{q\geq p} ( q^{-(\b+\ep)} m_q )$
and $h_0:=h_1/2^\b$. With a direct calculation, we observe that $((p+1)^{-\b} h_p)_{p\in\N_0}$ is nondecreasing and, by (ii),  
we see that there exists $C>1$ such that $m_p/C\leq h_p \leq m_p$ for all $p\in\N$, so $\bh\simeq\m$. Finally, for all $p\in\N$ we compute
$$\frac{h_{2p}}{h_p}= 2^{\b+\ep} \frac{\inf_{q\geq 2p} ( q^{-(\b+\ep)} m_q )}{\inf_{q\geq p} ( q^{-(\b+\ep)} m_q )} \geq 2^{\b+\ep}.$$
Hence $\inf_{p\geq 1} h_{2p}/h_p > 2^{\b}$.\para

(iii)$\Rightarrow$(iv) By (iii), there exists $\ep>0$ such that $\inf_{p\geq 1} h_{2p}/h_p > 2^{\b+\ep}$. Then for all $k\in\N$, $k\geq 2$, there exists $n\in\N$ such that $2^n\leq k< 2^{n+1}$ and for every $p\in\N$ we observe that
$$\frac{h_{kp}}{k^\b h_p} \geq \frac{h_{2^n p}}{(2^{n+1})^{\b} h_p}\geq 2^{n\ep-\b}.$$
Therefore (iv) is valid for $\bh$, then it also holds for $\m$ because $\bh\simeq\m$. \para

(iv)$\Rightarrow$(v) Immediate.\para

(v)$\Rightarrow$(vi) It follows from Theorem~\ref{th:main.th.beta}[(ix) $\Rightarrow$ (viii)] for $\sigma_2(t)=m_{\lfloor t\rfloor}$ for all $t\geq 0$. \para

(vi)$\Rightarrow$(vii) For $\sigma_2(t)=f_{\bs_{\m}}(t)=m_{\lfloor t\rfloor}$ for all $t\geq 0$, Theorem~\ref{th:main.th.beta}[(viii) $\Rightarrow$ (iv)] implies $\b(\sigma)=\b(f_{\bs_{\m}})=\b(\bs_{\m})>\b$ and, by Remark~\ref{remark:shift.seq.Mat.ind.order}, $\b(\m)>\b$. \para
 
(vii)$\Leftrightarrow$(viii) By Theorem~\ref{th:equality.ga.beta.mu.omega}. \para

(vii)$\Leftrightarrow$(ix) $\Leftrightarrow$(i) By Theorem~\ref{th:main.th.beta} for $\sigma_1 (t)=f_{\m}(t)=m_{\lfloor t\rfloor -1}$ for $t\geq 1$ and $\sigma_1(t)=m_0$ for $t\in[0,1]$.

\end{proof}

The importance of Theorem~\ref{th:main.beta.sequences} lies on the relation between the listed conditions and some of the classical properties for weight sequences appearing in the literature. For $\b=0$, (i) is precisely \hyperlink{snq}{(snq)} for $\M$ and for $\b=1$ it is condition $(\ga_1)$ of H.-J. Petzsche~\cite{Petzsche88} for $\m$. The authors have employed, when studying the surjectivity of the Borel map~\cite{JimenezSanzSchindlInjectSurject}, an extension of $(\ga_1)$ introduced by J. Schmets, M. Valdivia~\cite{SchmetsValdivia00} for $r\in\N$, that can be defined for $r>0$ as follows: we say that  $\m$ satisfies \hypertarget{gar}{$(\ga_{r})$} if there exists $C>0$ such that
$$(\ga_{r})\qquad \sum^\oo_{k=p} \frac{1}{(m_k)^{1/r}}\leq \frac{C (p+1) }{(m_p)^{1/r}},  \qquad p\in\N_0.$$
In particular, the next corollary was needed in the cited work.

\begin{corollary}\label{coro:snq.ga1.gabeta}
 Let $\M$ be a weight sequence and $\b>0$. Then
 \begin{enumerate}[(i)]
  \item $\ga(\M)>0$ if and only if $\M$ satisfies \hyperlink{snq}{(snq)}. 
\item $\ga(\M)>1$ if and only if $\m$ satisfies \hyperlink{gar}{$(\ga_{1})$} .
\item $\ga(\M)>\b$ if and only if $\m$ satisfies \hyperlink{gar}{$(\ga_{\b})$} .
 \end{enumerate}
\end{corollary}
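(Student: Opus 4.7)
The plan is to derive all three equivalences from Theorem~\ref{th:main.beta.sequences}, reducing (iii) to (ii) via a power substitution on the sequence of quotients. Since parts (i) and (ii) are essentially translations, I would dispatch them first by simply matching condition (i) of Theorem~\ref{th:main.beta.sequences} with the target inequality at $\b=0$ and $\b=1$ respectively.

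For part (i), take $\b=0$ in Theorem~\ref{th:main.beta.sequences}. Condition (viii) there reads $\ga(\M)>0$, while condition (i) becomes $\sum_{k\ge p}\frac{1}{(k+1)m_k}\le C/m_p$ for every $p\in\N_0$. Rewriting $1/m_k=M_k/M_{k+1}$ and $1/m_p=M_p/M_{p+1}$, this is literally (snq). Similarly, for part (ii), take $\b=1$: condition (i) becomes $\sum_{k\ge p}1/m_k \le C(p+1)/m_p$, which is the defining inequality of \hyperlink{gar}{$(\ga_1)$} for $\m$, and condition (viii) is $\ga(\M)>1$.

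For part (iii), the idea is to pass to the auxiliary weight sequence $\L$ whose sequence of quotients is $\bl := \bm^{1/\b} = (m_{p-1}^{1/\b})_{p\in\N}$. Since $\m$ is nondecreasing with $\lim_{p\to\oo}m_p=\oo$ (as $\M$ is a weight sequence) and $1/\b>0$, the sequence $\bl$ has the same two properties, so by Lemma~\ref{lemma.basic.properties.seq} there is indeed a weight sequence $\L$ with quotients $\bl$. Now the defining inequality of \hyperlink{gar}{$(\ga_\b)$} for $\m$,
\[
\sum_{k\ge p}\frac{1}{m_k^{1/\b}}\le \frac{C(p+1)}{m_p^{1/\b}},
\]
is, word for word, the condition \hyperlink{gar}{$(\ga_1)$} for $\bl$. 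Applying part (ii) to $\L$, this is equivalent to $\ga(\L)>1$.

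It remains to relate $\ga(\L)$ to $\ga(\M)$. By Theorem~\ref{th:equality.ga.beta.mu.omega}, $\ga(\L)=\b(\bl)=\b(\bm^{1/\b})$, and by Proposition~\ref{prop.roots.gevrey.multiply.Mat.ind.order} one has $\b(\bm^{1/\b})=\tfrac{1}{\b}\b(\bm)=\tfrac{1}{\b}\ga(\M)$. Hence $\ga(\L)>1$ if and only if $\ga(\M)>\b$, completing (iii). The main (mild) obstacle is only a careful bookkeeping check that the substitution $\bl=\bm^{1/\b}$ genuinely yields a weight sequence and that the rescaling identity for $\b$ under taking $s$-th powers (with $s=1/\b$) applies to sequences of quotients in the form stated; both follow directly from the cited results.
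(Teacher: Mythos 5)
Your proposal is correct and follows essentially the same route as the paper: (i) and (ii) are read off from Theorem~\ref{th:main.beta.sequences} at $\b=0$ and $\b=1$, and (iii) is reduced to (ii) by passing to the power sequence (your $\L$, whose quotients are $\bm^{1/\b}$, is exactly the paper's $\M^{1/\b}$) together with the rescaling $\ga(\M^{1/\b})=\ga(\M)/\b$ from Proposition~\ref{prop.roots.gevrey.multiply.Mat.ind.order} and Theorem~\ref{th:equality.ga.beta.mu.omega}. No gaps.
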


\begin{proof}
 (i) and (ii) follow directly from Theorem~\ref{th:main.beta.sequences}. (iii) First, we note that $\M^{1/\b}$ is also a weight sequence, so by Proposition~\ref{prop.roots.gevrey.multiply.Mat.ind.order} and (ii), 
  $\ga(\M)>\b$ if and only if $\ga(\M^{1/\b})>1$  if and only if $\m^{1/\b}$ satisfies \hyperlink{gar}{$(\ga_{1})$} if and only if $\m$ satisfies \hyperlink{gar}{$(\ga_{\b})$}. 
\end{proof}

The reader might have noticed that condition (v) in Theorem~\ref{th:main.beta.sequences} appears frequently in the context of weighted spaces in the case $\b=0$ as condition $(\b_3)$ in~\cite{Schindldissertation} or as condition 12.(2) in~\cite{BonetMeiseMelikhov07} and in the case $\b=1$ as $(\b_1)$ in~\cite{Petzsche88,Schindldissertation}. It is worthy to mention that, due to the previously  commented index shift nuisance, sometimes these conditions are asked to be satisfied by $\m$ and others by $\bs_{\m}$, but thanks to Remark~\ref{remark:shift.seq.Mat.ind.order}, we know that both approaches are equivalent.\parb

In the literature, see H. Komatsu~\cite{Komatsu73}, H.-J. Petzsche~\cite{Petzsche88}, J.~Bonet, R.~Meise and S.N.~Melikhov~\cite{BonetMeiseMelikhov07},
the Carleman ultradifferentiable classes are usually defined by imposing control of the derivatives as in \eqref{eq:estimation.f.factorialIN}. However, in some cases, and specially when dealing with ultraholomorphic classes (see, for example, \cite{ChaumatChollet94}, \cite{JimenezSanzSchindlInjectSurject}, \cite{Thilliez03}), 
the control as in~\eqref{eq:estimation.f.factorialOUT}  is preferred, so highlighting the defect of analyticity of the functions belonging to the class. 
Accordingly, the properties of the class are deduced from conditions on the sequence $\widehat{\M}:=\mathbb{G}_1\M=(p!M_p)_{p\in\N_0}$ or on the sequence $\M$, respectively. Even if most of the conditions can be translated from one approach to the other in the first case $\widehat{\M}$ is often supposed to be a weight sequence whereas in the second situation the basic hypothesis is satisfied by $\M$. 
This difference might be troublesome since if $\M$ is a weight sequence then $\widehat{\M}$ also is, but the opposite is not true in general. Hence one might think if Theorem~\ref{th:main.beta.sequences} is valid under weaker assumptions. In this sense, we observe the following:  

\begin{corollary}\label{coro:widehat.M.beta}
Let $\b\geq0$ and $\M$ be a sequence such that for some $r\geq 0$ the sequence $\mathbb{G}_r \M :=(p!^r M_p)_{p\in\N_0}$ is a weight sequence. Then the equivalences for $\M$ and $\b$ in Theorem~\ref{th:main.beta.sequences} hold.  
\end{corollary}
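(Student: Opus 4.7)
Write $\bN := \mathbb{G}_r\bM$. By hypothesis $\bN$ is a weight sequence, and its quotient sequence $\bn = (n_p)_{p\in\N_0}$ is
\[
n_p = \frac{(p+1)!^r\, M_{p+1}}{p!^r\, M_p} = (p+1)^r m_p.
\]
Applying Theorem~\ref{th:main.beta.sequences} to $\bN$ with parameter $\b + r \geq 0$ gives the equivalence of the nine conditions (i$'$)--(ix$'$) formulated for $\bN$ and $\b + r$. To establish the corollary it is enough to verify, for each $k \in \{1,\dots,9\}$, that condition (k) for $\bM,\b$ is equivalent to condition (k$'$) for $\bN,\b+r$; the nine statements for $\bM,\b$ are then equivalent via the already known chain for $\bN$.

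\textbf{Translations.} For (i) and (ix), substituting $n_p = (p+1)^r m_p$ makes the factors $(p+1)^r$ cancel exactly, so the statements coincide verbatim. For (ii), $(p+1)^r/p^r \in [1,2^r]$ for $p\geq 1$, hence $(m_p/p^{\b+\ep})_{p\in\N}$ is almost increasing iff $(n_p/p^{\b+r+\ep})_{p\in\N}$ is. Conditions (vii) and (viii) follow from Proposition~\ref{prop.roots.gevrey.multiply.Mat.ind.order} (combined with Remark~\ref{remark:shift.seq.Mat.ind.order} for index shifts) and Theorem~\ref{th:equality.ga.beta.mu.omega}, which yield $\b(\bn) = r + \b(\bm)$ and $\gamma(\bN) = r + \gamma(\bM)$. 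For (iv) and (v), the identity $n_{kp}/n_p = ((kp+1)/(p+1))^r \,m_{kp}/m_p$ and $((kp+1)/(p+1))^r \to k^r$ allow the liminf-type statements to pass through: $\liminf_p n_{kp}/n_p > k^{\b+r}$ iff $\liminf_p m_{kp}/m_p > k^\b$. For (vi), the direction (vi$'$) $\Rightarrow$ (vi) is immediate from $((kp+1)/(p+1))^r \leq k^r$; for the converse, given $\theta' \in (0,1)$ choose $\theta := \theta'/2^r \in (0,1)$ and $k$ from (vi) for $\bM,\b$, then use $(kp+1)/(k(p+1)) \geq 1/2$ for $p \geq 1$, $k \geq 2$.

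\textbf{Main obstacle.} The delicate case is (iii). Given $\bh \simeq \bn$ satisfying (iii$'$), the candidate $g_p := h_p/(p+1)^r$ satisfies $\bg \simeq \bm$ and inherits the monotonicity $((p+1)^{-\b} g_p) = ((p+1)^{-\b-r} h_p)$, and
\[
\inf_{p\geq 1} \frac{g_{2p}}{g_p} = \inf_{p\geq 1} \Bigl(\tfrac{p+1}{2p+1}\Bigr)^{\!r} \frac{h_{2p}}{h_p} \geq 2^{-r} \inf_{p\geq 1}\frac{h_{2p}}{h_p} > 2^{-r}\cdot 2^{\b+r} = 2^{\b},
\]
since $(p+1)/(2p+1) \geq 1/2$. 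In the reverse direction the naive $h_p := (p+1)^r g_p$ does \emph{not} immediately yield $\inf h_{2p}/h_p > 2^{\b+r}$, because $(2p+1)/(p+1) < 2$; this is bypassed by closing the loop through (iii) $\Rightarrow$ (ii) $\Rightarrow$ (vii), where the implication (ii) $\Rightarrow$ (iii) in the proof of Theorem~\ref{th:main.beta.sequences} never uses monotonicity of $\bm$ (the construction $h_p = p^{\b+\ep}\inf_{q\geq p}(q^{-(\b+\ep)} m_q)$ works for arbitrary $\bm$), together with the already-established (vii) $\iff$ (vii$'$) $\Rightarrow$ (iii$'$).
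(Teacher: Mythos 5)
Your proof is correct and takes essentially the same route as the paper, which simply asserts (via Proposition~\ref{prop.roots.gevrey.multiply.Mat.ind.order} and Remark~\ref{remark:shift.seq.Mat.ind.order}) that each condition for $\m$ and $\b$ is equivalent to the same condition for the quotient sequence $(p^r m_{p-1})_{p\in\N}$ of $\G_r\M$ and $\b+r$, and then applies Theorem~\ref{th:main.beta.sequences} to the weight sequence $\G_r\M$; your condition-by-condition verification, including the observation that condition (iii) does not transfer verbatim (the constant $2^{\b}$ versus $2^{\b+r}$ issue) and must instead be re-linked through implications whose proofs avoid the monotonicity of $\m$, merely supplies the details behind the paper's ``we can check''. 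The one step you assert without proof, (iii) $\Rightarrow$ (ii) for an arbitrary $\M$, does hold and follows in a couple of lines from $\b\ge 0$ and the monotonicity of $((p+1)^{-\b}h_p)_{p\in\N_0}$ built into (iii) (alternatively, the theorem's own proof of (iii) $\Rightarrow$ (iv) also never uses that $\m$ is nondecreasing), so this is not a genuine gap.
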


\begin{proof}
Using  Proposition~\ref{prop.roots.gevrey.multiply.Mat.ind.order} and Remark~\ref{remark:shift.seq.Mat.ind.order}, we can check that each condition in Theorem~\ref{th:main.beta.sequences}  holds for $\m$ and $\b$ if and only if this same condition is valid for $( p^r m_{p-1})_{p\in\N}$ and $\b+r$. Hence we deduce the desired equivalence for the conditions applying Theorem~\ref{th:main.beta.sequences} to $\mathbb{G}_r \M$. 
\end{proof}

By Lemma~\ref{lemma:strong.equiv.seq.Mat.ind.order}, we know that the value of the index $\b(\m)=\ga(\M)$ is stable for $\simeq$. Consequently, all the conditions listed in Theorem~\ref{th:main.beta.sequences}  are stable for $\simeq$ for weight sequences. However, the natural requirement for weight sequences is the stability for the weaker relation $\approx$. In~\cite[Th.\ 3.4]{Petzsche88}, the stability of \hyperlink{gar}{$(\ga_1)$} condition for $\approx$ is indirectly deduced and to the best of our knowledge there is no direct proof of this fact which is used  to obtain the desired stability as follows.

\begin{corollary}\label{coro:stability.gamma.weak.equiv}
 Let $\M$ and $\L$ be sequences with $\M\hyperlink{approx}{\approx}\L$. Assume that there exists $r\ge 0$ such that $\G_r \M$ and $\G_r\L$ are weight sequences. Then $\gamma(\M)=\gamma(\L)$ holds true.
Consequently, all the conditions in Theorem~\ref{th:main.beta.sequences} are stable for $\approx$ for such sequences. 
\end{corollary}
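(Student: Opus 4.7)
The plan is to reduce to the case of weight sequences and then lean on the stability of $(\ga_1)$ under $\approx$ proved by Petzsche in \cite[Th.~3.4]{Petzsche88}. From $\M\approx\L$ one immediately gets $\G_r\M\approx\G_r\L$ with the same constant, and the sequence of quotients of $\G_r\M$ is $(p^r m_{p-1})_{p\in\N}$, so Theorem~\ref{th:equality.ga.beta.mu.omega} together with Proposition~\ref{prop.roots.gevrey.multiply.Mat.ind.order} gives $\ga(\G_r\M)=r+\ga(\M)$ and analogously for $\L$. Hence $\ga(\M)=\ga(\L)$ is equivalent to $\ga(\G_r\M)=\ga(\G_r\L)$, and we may assume from the outset that both $\M$ and $\L$ are weight sequences.

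Under this reduction, Corollary~\ref{coro:snq.ga1.gabeta}(iii) translates the computation of $\ga(\M)$ into a statement about the conditions $(\ga_\b)$: for every $\b>0$, $\ga(\M)>\b$ holds if and only if $\m$ satisfies $(\ga_\b)$. Thus the equality $\ga(\M)=\ga(\L)$ will follow once we know that, for each $\b>0$, the condition $(\ga_\b)$ is stable under $\approx$ for weight sequences. To obtain this, I would pass from $\b$ to $1$ via $\b$-th roots: logarithmic convexity and the divergence $m_p\to\infty$ both survive under $\M\mapsto\M^{1/\b}$, so $\M^{1/\b}$ and $\L^{1/\b}$ are again weight sequences, the bound $C^{-p}L_p\leq M_p\leq C^{p}L_p$ gives at once $\M^{1/\b}\approx\L^{1/\b}$, and by construction $(\ga_\b)$ for $\m$ is the same as $(\ga_1)$ for $\m^{1/\b}$. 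Applying Petzsche's $(\ga_1)$-stability to $\M^{1/\b}$ and $\L^{1/\b}$ then closes the loop and, taking the supremum over all $\b>0$ for which the equivalent conditions hold, yields $\ga(\M)=\ga(\L)$.

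The only serious input is Petzsche's result, for which, as the authors note, only an indirect proof is available; the role of the $\b$-th root trick is precisely to avoid needing a direct $(\ga_\b)$-stability argument for each $\b$. For the final assertion it then suffices to recall that by Theorem~\ref{th:main.beta.sequences} each of its conditions (i)--(ix) is equivalent, for a weight sequence, to $\ga(\M)>\b$ (item~(viii)); consequently the stability of $\ga(\M)$ under $\approx$ just established forces the stability of every one of those conditions as well.
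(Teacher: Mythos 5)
Your proposal is correct and follows essentially the same route as the paper: reduce to weight sequences via $\G_r$ and the index-shift formula $\ga(\G_r\M)=r+\ga(\M)$, characterize $\ga(\M)>\b$ through the conditions $(\ga_\b)$, pass to $\b$-th roots to reduce $(\ga_\b)$ to $(\ga_1)$, and invoke Petzsche's stability of $(\ga_1)$ under $\approx$ before taking the supremum over $\b$. The only cosmetic difference is that the paper phrases the root step directly through Corollary~\ref{coro:snq.ga1.gabeta}.(ii) applied to $\M^{1/s}$ rather than through part (iii), which is the same argument.
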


\begin{proof}
We might assume without loss of generality that $\M$ and $\L$ are weight sequences, otherwise, since $\G_r\M\approx \G_r \L$ also holds, one will show that $\ga(\G_r\M)=\ga(\G_r\L)$ and we conclude using Proposition~\ref{prop.roots.gevrey.multiply.Mat.ind.order} and Corollary~\ref{coro:widehat.M.beta}.\para

 Let $0<s<\gamma(\M)$, then $\M^{1/s}$ and $\L^{1/s}$ are also a weight sequence with $\M^{1/s}\hyperlink{approx}{\approx}\L^{1/s}$ and, by Proposition~\ref{prop.roots.gevrey.multiply.Mat.ind.order}, we obtain that $\gamma(\M^{1/s})>1$. By Corollary~\ref{coro:snq.ga1.gabeta}.(ii), applied to $\M^{1/s}$,  we see that $\m^{1/s}$ satisfies \hyperlink{gar}{$(\ga_1)$}. By~\cite[Th.\ 3.4]{Petzsche88}, we deduce that  $\l^{1/s}$ has \hyperlink{gar}{$(\ga_1)$}.
Then, reversing the arguments above for $\L$ instead of $\M$, we get $s<\gamma(L)$ and which proves $\gamma(\M)\le\gamma(\L)$. The converse inequality follows analogously.
\end{proof}

\begin{remark}
 One can go one step further and study what remains true if $\M$ is an arbitrary sequence, not necessarily a weight sequence, such that $\m \simeq \l$ with $\L$ a weight sequence. In this case, since the value of $\ga$ index and condition \hyperlink{gar}{$(\ga_1)$} are stable for $\simeq$ for any pair of sequences, applying Theorem~\ref{th:main.beta.sequences} for $\L$ and $\b=1$, we see that $\ga(\M)>1$ if and only if $\m$ has \hyperlink{gar}{$(\ga_1)$}. \para 
 Moreover, since $\gamma(\M)>1$ implies 
  that there exists $\ep>0$ such that $(m_p/p^{1+\ep})_{p\in\N}$ is almost increasing and tends to infinity, as in Theorem~\ref{th:main.beta.sequences}~[(ii) $\Rightarrow$ (iii)] one might construct a weight sequence $\L= (H_p/p!)_{p\in\N_0}$ with $\m \simeq ((p+1) \ell_p)_{p\in\N}$. Hence,  for an arbitrary sequence $\M$ we see that the following are equivalent:
\begin{itemize}
\item[(i)] $\gamma(\M)>1$, 

\item[(ii)] there exists  a weight sequence $\L$ such that  $\m \simeq ((p+1) \ell_p)_{p\in\N_0}$ and $\widehat{\l}$ satisfies \hyperlink{gar}{$(\ga_1)$},

\item[(iii)] there exists  a weight sequence $\H$ such that  $\m \simeq \bh$ and $\bh$ satisfies \hyperlink{gar}{$(\ga_1)$},

\end{itemize}
 
\end{remark}

Subsequently, we obtain an analogous result for the index $\a(\m)$ which, as we will show, is related to moderate growth condition.

\begin{theorem}\label{th:main.alpha.sequences}
  Let $\M$ be a weight sequence and $\a>0$. The following are equivalent: 
 \begin{enumerate}[(i)]
\item  there exists $C>0$ such that $\displaystyle \sum^p_{k= 0} \frac{ (k+1)^{\a-1}}{m_k}\leq C \frac{ (p+1)^\a}{m_p}$ for all  $p\in\N_0$,
\item there exists $\ep\in(0,\a)$ such that $(m_p/p^{\a-\ep})_{p\in\N}$ is almost decreasing,
\item there exists a sequence $\bh\simeq \m$ such that $((p+1)^{-\a}h_p)_{p\in\N_0}$ is nonincreasing and
 $ \displaystyle \sup_{p\geq1} \frac{h_{2p}}{h_p}< 2^{\a},$
 \item  $\displaystyle \lim_{k\to\oo}\limsup_{p\to\oo} \frac{m_{kp}}{k^\a m_p}=0,$
 \item there exists $k\in\N$, $k\ge 2$, such that $\displaystyle\limsup_{p\to\oo} \frac{m_{kp}}{m_p}<  k^\a$,
\item for every $\theta \in (0,1)$ there exists $k\in\N$, $k\ge 2$,  such that $m_{kp}\leq \theta k^{\a} \, m_{p}$ for every $p\in\N$,
\item  $\a(\bm)<\a$,
\item there exists $C>0$ such that  
$\displaystyle \sum_{k=p+1}^{\oo} \frac{m_k}{(k+1)^{1+\a}}\leq C \frac{ m_p}{(p+1)^\a}$ for every $p\in\N_0$.
\end{enumerate}
 
\end{theorem}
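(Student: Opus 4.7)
The plan is to mirror the proof of Theorem~\ref{th:main.beta.sequences}, reducing each equivalence to Theorem~\ref{th:main.th.alpha} applied to one of the two step functions $\sigma_1(t)=m_{\lfloor t\rfloor-1}$ for $t\geq 1$ (set $\sigma_1(t)=m_0$ on $[0,1]$) and $\sigma_2(t)=m_{\lfloor t\rfloor}$ for $t\geq 0$. Since $\M$ is a weight sequence, $\m$ is nondecreasing with $\lim_p m_p=\infty$ by Lemma~\ref{lemma.basic.properties.seq}, so both $\sigma_1$ and $\sigma_2$ satisfy the hypotheses of Theorem~\ref{th:main.th.alpha}. Throughout, I would use Remark~\ref{remark:shift.seq.Mat.ind.order} to identify $\a(\m)$ with $\a(\bs_\m)=\a(f_{\bs_\m})=\a(\sigma_2)$, and similarly for $\sigma_1$, so that the step functions interchangeably provide each integral/discrete inequality and the index $\a(\m)<\a$.

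I would run the implications cyclically: (i)$\Rightarrow$(ii)$\Rightarrow$(iii)$\Rightarrow$(iv)$\Rightarrow$(v)$\Rightarrow$(vi)$\Rightarrow$(vii)$\Rightarrow$(viii)$\Rightarrow$(i). For (i)$\Rightarrow$(ii), apply Theorem~\ref{th:main.th.alpha}[(ix)$\Rightarrow$(vii)] to $\sigma_1$, obtaining $\gamma\in(0,\a)$ such that $(m_p/(p+1)^{\gamma})$ is almost decreasing; then $\ep:=\a-\gamma$ works. The implications (iv)$\Rightarrow$(v) and (v)$\Rightarrow$(vi) are obtained via Theorem~\ref{th:main.th.alpha}[(xi)$\Rightarrow$(x)] applied to $\sigma_2$; (vi)$\Rightarrow$(vii) uses [(x)$\Rightarrow$(vi)] for $\sigma_2$ combined with the shift remark; and (vii)$\Rightarrow$(viii)$\Rightarrow$(i) are immediate consequences of Theorem~\ref{th:main.th.alpha}[(vi)$\Rightarrow$(xii)$\Rightarrow$(ix)] applied to $\sigma_1$, after rewriting the bounds in terms of the indexing $m_{p-1}$.

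The step that requires genuine care is (ii)$\Rightarrow$(iii), the construction of an equivalent sequence $\bh\simeq\m$ with the prescribed strict control at doubling. Given $\ep\in(0,\a)$ as in (ii), I would set
$$h_p:=p^{\,\a-\ep}\sup_{q\geq p}\frac{m_q}{q^{\,\a-\ep}}\qquad(p\in\N),$$
and choose $h_0$ so that $h_1/h_0\leq 2^{\a}$. The almost decreasing hypothesis yields $m_p\leq h_p\leq Mm_p$, hence $\bh\simeq\m$; by construction $h_p/p^{\,\a-\ep}$ is nonincreasing, so $h_p/(p+1)^{\a}$ is (almost) nonincreasing after absorbing a harmless factor; and directly from the definition,
$$\frac{h_{2p}}{h_p}=2^{\,\a-\ep}\,\frac{\sup_{q\geq 2p}(m_q/q^{\,\a-\ep})}{\sup_{q\geq p}(m_q/q^{\,\a-\ep})}\leq 2^{\,\a-\ep}<2^{\a},$$
giving the strict inequality of (iii). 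The passage (iii)$\Rightarrow$(iv) then follows by dyadic iteration: if $2^n\leq k<2^{n+1}$, then $h_{kp}/(k^{\a}h_p)\leq 2^{\a-\ep}\cdot 2^{-n\ep}\to 0$ as $k\to\infty$, and $\bh\simeq\m$ transfers this to $\m$.

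The main technical obstacle is bookkeeping rather than a novel idea: keeping track of the index shift between $\m$ and $\bs_\m$ (so that Theorem~\ref{th:main.th.alpha} applied to $\sigma_1$ or $\sigma_2$ yields precisely the inequality stated for $m_k$ versus $m_{k-1}$, and sums starting at $0$ versus at $1$), and absorbing the difference between $p^{\a}$ and $(p+1)^{\a}$ into the almost monotonicity constants. Beyond this, the proof presents no difficulty that is not already addressed in the proof of Theorem~\ref{th:main.beta.sequences}, and one could alternatively shortcut several steps by invoking Lemma~\ref{lemma:strong.equiv.seq.Mat.ind.order} together with Theorem~\ref{th:main.th.alpha} applied to $f_{\m}$.
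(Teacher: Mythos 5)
Your overall strategy is exactly the paper's: the paper proves this theorem by declaring it ``analogous to the proof of Theorem~\ref{th:main.beta.sequences}'', i.e.\ by applying Theorem~\ref{th:main.th.alpha} to the step functions $\sigma_1(t)=m_{\lfloor t\rfloor-1}$ and $\sigma_2(t)=m_{\lfloor t\rfloor}$ and using Remark~\ref{remark:shift.seq.Mat.ind.order} to pass between $\m$ and $\bs_{\m}$, and it singles out precisely the modified construction of $\bh$ in (ii)$\Rightarrow$(iii). Your routing of the implications through $\sigma_1$, $\sigma_2$ and the shift remark is correct, and the bookkeeping issues you flag (index shift, $p^\a$ versus $(p+1)^\a$ in the almost-monotonicity statements, sums starting at $0$ or $1$) are indeed harmless.

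There is, however, one genuine defect in your (ii)$\Rightarrow$(iii): with your normalization $h_p:=p^{\a-\ep}\sup_{q\geq p}m_q/q^{\a-\ep}$ you do get $\bh\simeq\m$, $\sup_{p\geq 1}h_{2p}/h_p\leq 2^{\a-\ep}<2^{\a}$, and that $p\mapsto h_p/p^{\a}$ is nonincreasing, but condition (iii) requires that $((p+1)^{-\a}h_p)_{p\in\N_0}$ be \emph{nonincreasing}, and this does not follow and can actually fail: take $m_p=p^{\a-\ep}$ for $p\geq1$ (and $m_0=1$), which is a weight-sequence quotient satisfying (ii); then $h_p=m_p$ for $p\geq 1$, and $(2)^{-\a}h_1=2^{-\a}$ while $3^{-\a}h_2=2^{\a-\ep}3^{-\a}$, so monotonicity at $p=1$ fails whenever $(4/3)^{\a}>2^{\ep}$ (e.g.\ $\a=3$, $\ep=1/10$). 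Since (iii) is itself one of the equivalent statements being proved, ``almost nonincreasing after absorbing a harmless factor'' is not enough, and your cycle breaks at this step. The repair is exactly the paper's choice of normalization: set $h_p:=(p+1)^{\a-\ep}\sup_{q\geq p}m_q/(q+1)^{\a-\ep}$ for $p\in\N$ and $h_0:=h_1/2^{\a}$. Then $h_p/(p+1)^{\a-\ep}$ is nonincreasing by construction, hence so is $h_p/(p+1)^{\a}=\bigl(h_p/(p+1)^{\a-\ep}\bigr)(p+1)^{-\ep}$ (including the step from $p=0$ to $p=1$ by the choice of $h_0$); the equivalence $\bh\simeq\m$ still follows from the shifted form of (ii) (Remark~\ref{remark:shift.seq.Mat.ind.order}), and $h_{2p}/h_p\leq\bigl((2p+1)/(p+1)\bigr)^{\a-\ep}\leq 2^{\a-\ep}<2^{\a}$ survives unchanged. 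With this one modification your argument coincides with the intended proof.
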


\begin{proof}
Analogous to the proof of Theorem~\ref{th:main.beta.sequences} using Theorem~\ref{th:main.th.alpha} instead of Theorem~\ref{th:main.th.beta}. We only note that the sequence $\h$ constructed in the proof of (ii) $\Rightarrow$ (iii) might be defined for $p\in\N$ by $h_p:= (p+1)^{\a-\ep} \sup_{q\geq p} m_q q^{\a-\ep}$ and $h_0:= h_1 /2^\a$.  
\end{proof}

Using that $\m$ is nondecreasing we see that $\sup_{p\geq 1} m_{2p}/m_p<\oo$ if and only if there exists $\a>0$ such that 
Theorem~\ref{th:main.alpha.sequences}.(v) holds. Hence, applying Lemma~\ref{lemma.basic.properties.seq}, we obtain the following corollary.

\begin{corollary}\label{coro:charact.mg} Let $\M$ be a weight sequence.
The following are equivalent:
\begin{itemize}
\item[(i)] $\M$ satisfies \hyperlink{mg}{(mg)},
\item[(ii)] there exists $\a>0$ such that $\M$ satisfies every/some of the equivalent conditions (i)--(viii) in Theorem~\ref{th:main.alpha.sequences}.
\end{itemize}
In particular, $\M$ satisfies \hyperlink{mg}{(mg)} if and only if $\a(\m)<\oo$  if and only if $\m$ is O-regularly varying.
\end{corollary}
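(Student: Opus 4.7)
The plan is to reduce the corollary to Lemma~\ref{lemma.basic.properties.seq}(iii), which tells us that for a logarithmically convex $\M$, condition \hyperlink{mg}{(mg)} is equivalent to $\sup_{p\geq 1} m_{2p}/m_p < \oo$. What I need to show is that this uniform quotient bound is in turn equivalent to the existence of some $\a>0$ for which condition (v) of Theorem~\ref{th:main.alpha.sequences} holds, namely $\limsup_{p\to\oo} m_{kp}/m_p < k^\a$ for some integer $k\geq 2$. Once this bridge is in place, the equivalence (i)--(viii) of Theorem~\ref{th:main.alpha.sequences} delivers (i)$\Leftrightarrow$(ii) at once.

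For the direction \hyperlink{mg}{(mg)}~$\Rightarrow$~(ii), I would set $S:=\sup_{p\geq 1} m_{2p}/m_p$, finite by Lemma~\ref{lemma.basic.properties.seq}(iii.c), choose any $\a>\max(0,\log_2 S)$, and note that $\limsup_{p\to\oo} m_{2p}/m_p \leq S < 2^\a$, which is exactly Theorem~\ref{th:main.alpha.sequences}(v) for $k=2$ and this $\a$. Conversely, if Theorem~\ref{th:main.alpha.sequences}(v) holds for some $\a>0$ and some $k\geq 2$, then $m_{kp}/m_p$ is uniformly bounded in $p$ (the eventual bound from $\limsup<k^\a$ combined with the finitely many initial terms); since $\m$ is nondecreasing and $k\geq 2$, we have $m_{2p}\leq m_{kp}$ for every $p\geq 1$, so $\sup_{p\geq 1} m_{2p}/m_p \leq \sup_{p\geq 1} m_{kp}/m_p <\oo$, and Lemma~\ref{lemma.basic.properties.seq}(iii) gives \hyperlink{mg}{(mg)}.

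For the "in particular" statement, condition (vii) of Theorem~\ref{th:main.alpha.sequences} reads $\a(\m)<\a$, so the equivalence (i)$\Leftrightarrow$(ii) just established means \hyperlink{mg}{(mg)} is equivalent to the existence of some $\a>0$ with $\a(\m)<\a$, i.e.\ to $\a(\m)<\oo$. Finally, since $\M$ is a weight sequence, $\m$ is nondecreasing, so by Lemma~\ref{lemma:lc.ORVSeq}(i) one has $\b(\m)\geq 0>-\oo$; invoking Remark~\ref{rema.Ch.Th.ORVSEq} (O-regular variation of a sequence is the same as both Matuszewska indices being finite), $\m\in ORV$ reduces to $\a(\m)<\oo$, closing the chain.

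I do not anticipate any substantive obstacle: the entire argument is a transcription through the classical equivalence (mg)$\Leftrightarrow \sup_p m_{2p}/m_p<\oo$, together with the elementary observation that for a nondecreasing sequence the boundedness of $m_{kp}/m_p$ for some fixed $k\geq 2$ is already enough to dominate $m_{2p}/m_p$. The only mild care needed is in passing from $\limsup<k^\a$ to a genuine supremum bound, which is immediate because only finitely many indices $p$ are unaccounted for by the $\limsup$.
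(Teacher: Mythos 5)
Your proof is correct and follows essentially the same route as the paper: reduce \hyperlink{mg}{(mg)} via Lemma~\ref{lemma.basic.properties.seq}(iii) to $\sup_{p\geq 1} m_{2p}/m_p<\oo$, show this is equivalent to Theorem~\ref{th:main.alpha.sequences}(v) holding for some $\a>0$ (using monotonicity of $\m$ for the converse), and read off the statements about $\a(\m)$ and O-regular variation from condition (vii), $\b(\m)\ge 0$, and Remark~\ref{rema.Ch.Th.ORVSEq}. No gaps.
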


\begin{remark}
 As it was pointed out for functions in Remark~\ref{remark:alpha.stability.open}, the set 
 $$B_{\m}:=\{\b>0; \, \text{any of the conditions in Theorem~\ref{th:main.beta.sequences} holds for}\,\, \m \, \, \text{and} \,\, \b  \}$$
 is either empty or an open subinterval of $(0,\oo)$ and, correspondingly, $\b(\m)=0$ or $\b(\m)=\sup B_\m$. Similarly, the set 
 $$A_{\m}:=\{\a>0; \, \text{any of the conditions in Theorem~\ref{th:main.alpha.sequences} holds for}\,\, \m \, \, \text{and} \,\, \a  \}$$
 is either empty or an open subinterval of $(0,\oo)$ and, correspondingly, $\a(\m)=\oo$ or $\a(\m)=\inf A_\m$.
 \end{remark}


\begin{remark}
We observe that Corollary~\ref{coro:widehat.M.beta} is also valid if we replace Theorem~\ref{th:main.beta.sequences} by Theorem~\ref{th:main.alpha.sequences}. In this situation, the stability for $\approx$ is  directly obtained using that $\simeq$ and $\approx$ are equivalent if $\M$ has moderate growth, and that $\M$ has \hyperlink{mg}{(mg)} if and only if  for some $r\geq0$ the sequence $\G_{r}\M$ has \hyperlink{mg}{(mg)}.  
\end{remark}

Finally, using the definition of the exponent of convergence of a nondecreasing sequence, see~\cite[p.\ 65]{Holland73}, as in~\cite[Th. 3.4]{Sanzflat14},  we obtain the following information for the index $\o(\M)=\mu(\m)$ of a weight sequence $\M$ which is related to  
the nonquasianalyticity condition. We recall that $\M$ is said to be {\itshape nonquasianalytic} (nq), if
$ \sum_{p=0}^{\infty} M_{p} /(p+1)M_{p+1} <+\infty$. Here the terminology may differ depending on the definition of the space, that is, if it is defined in terms of  $\widehat{\M}$, as in~\eqref{eq:estimation.f.factorialIN}, or  of $\M$, as in~\eqref{eq:estimation.f.factorialOUT}.

\begin{lemma}
 Let $\M$ be a sequence such that  $\G_r \M=(p!^r M_p)_{p\in\N_0}$ is a weight sequence for some $r\geq 0$. Then
 $$\o(\M)=\mu(\m)=\sup\{\mu>-r;   \quad  \sum^\oo_{k=0} \frac{1}{((k+1)^{r} m_k)^{1/(\mu+r)}}<\oo \}.$$
 In particular, if the hypothesis hold for some $r\in[0,1]$, then it is valid for $r=1$, that is, $\widehat{\M}$ is a weight sequence and we see that  if $\o(\M)>0$, then $\M$ is (nq) and if $\M$ is (nq), then $\o(\M)\geq 0$.  
\end{lemma}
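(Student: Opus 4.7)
The plan is to reduce the second equality to the classical formula for the exponent of convergence of a nondecreasing sequence tending to infinity: for such $(a_p)$,
$$\tau(\ba) := \inf\{t > 0;\, \sum_p a_p^{-t} < \oo\} = \limsup_p \frac{\log p}{\log a_p},$$
see \cite[p.\ 65]{Holland73}. The first equality $\o(\M) = \mu(\m)$ is immediate from Theorem~\ref{th:equality.ga.beta.mu.omega}, so I focus on the supremum characterization.

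First, set $a_p := (p+1)^r m_p$. Since $\G_r \M$ is a weight sequence, $(a_p)_{p\in\N_0}$ is its sequence of quotients, so by Lemma~\ref{lemma.basic.properties.seq} it is nondecreasing with $a_p \to \oo$; in particular one has $\mu(\m) \geq -r$. Holland's formula then applies to $(a_p)$, and a short bookkeeping using $\log a_p = r \log(p+1) + \log m_p$ together with the definition of $\mu(\m)$ gives $\tau(\ba) = 1/(r + \mu(\m))$, at least in the nondegenerate case $\mu(\m) > -r$. Rewriting the condition $\sum a_p^{-1/(\mu+r)} < \oo$ in terms of $\tau(\ba)$ translates it into $\mu < \mu(\m)$, so for $\mu > -r$ the defining set is (up to the endpoint $\mu = \mu(\m)$) the interval $(-r, \mu(\m))$, whose supremum is $\mu(\m)$.

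For the ``in particular'' part I first verify that $\widehat{\M} = \G_1 \M$ is itself a weight sequence whenever $\G_r \M$ is for some $r \in [0,1]$: the candidate quotient sequence factors as
$$(p+1) m_p = (p+1)^{1-r} \cdot (p+1)^r m_p,$$
a product of nonnegative nondecreasing sequences (since $1 - r \geq 0$) tending to infinity, hence it is itself nondecreasing and tends to infinity, and Lemma~\ref{lemma.basic.properties.seq} yields the conclusion. Then I apply the main formula with $r = 1$: if $\o(\M) > 0$, then $\tau(\ba) = 1/(1 + \o(\M)) < 1$, so $\sum ((k+1) m_k)^{-1} < \oo$, i.e., $\M$ is (nq); conversely, (nq) means $\sum ((k+1) m_k)^{-1} < \oo$, forcing $\tau(\ba) \leq 1$ and hence $\o(\M) = \mu(\m) \geq 0$.

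The main obstacle is the bookkeeping that converts the liminf defining $\mu(\m)$ into the limsup appearing in Holland's formula, together with the need to accommodate the degenerate case $\mu(\m) = -r$, where the defining set becomes empty and the equality has to be read via the convention of Remark~\ref{rema:inf.sup.empty} (or handled by a direct ad hoc argument).
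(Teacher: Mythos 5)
Your argument is correct and follows essentially the same route the paper intends: the lemma is given there without a written proof, only a pointer to the exponent of convergence of a nondecreasing sequence (\cite[p.~65]{Holland73}, as in \cite[Th.~3.4]{Sanzflat14}), which is precisely your reduction via $a_p=(p+1)^r m_p$, Lemma~\ref{lemma.basic.properties.seq}, and the identity $\tau(\ba)=1/(r+\mu(\m))$, together with the factorization $(p+1)m_p=(p+1)^{1-r}\cdot(p+1)^r m_p$ for the ``in particular'' part. Your remark about the degenerate case $\mu(\m)=-r$, where the defining set is empty and the equality must be read with a suitable convention, is a fair observation about the statement itself rather than a gap in your proof.
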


Then the classical relations between the properties of $\M$ can be re-obtained in terms of these indices.  As it happens for functions, see Remark~\ref{rema:RVandproperties}, if $\m$ is regularly varying, as it is in many of the examples in the applications, then all the information is tied to only one value, the index  $\ro=\b(\m)=\mu(\m)=\ro(\m)=\a(\m)$ of regular variation of $\m$.

\begin{remark}
The conditions listed in the results of this section appear related to diverse problems of analysis with several different names. Due to its relevance in the context of ultradifferentiable classes it is worthy to mention that in the classical work of H.~Komatsu~\cite{Komatsu73}, whose notation has been employed by many authors, (lc) is (M.1), (mg) is (M.2) and $(\ga_1)$ is (M.3). 
\end{remark}

\section{Associated weight functions and O-Regular variation}\label{AssociatedfunctionsORV}

In this context, there is a canonical form to go from weight sequences to weight functions. Therefore, it is natural to study how the information obtained from O-regular variation affects this procedure.

\subsection{Duality between \texorpdfstring{$\m$}{m},  \texorpdfstring{$\nu_\m$}{NuM} and \texorpdfstring{$\o_\M$}{OmM}}

Let $\M$ be a sequence of positive real numbers with $M_0=1$, then the {\itshape associated function} $\omega_\M: [0,\oo)\rightarrow\RR\cup\{+\infty\}$ is defined by
\begin{equation*}\label{assofunc}
\omega_\M(t):=\sup_{p\in\N_0}\log\left(\frac{t^p}{M_p}\right)\;\;\;\text{for all}\;t>0,\hspace{30pt}\omega_\M(0):=0.
\end{equation*}
For an abstract introduction of the associated function we refer to \cite[Chap. I]{mandelbrojtbook}, see also \cite[Def. 3.1]{Komatsu73}. If $\lim_{p\to\oo} (M_p)^{1/p}=\oo$, then $\omega_\M(t)<\infty$ for any finite $t$, $\omega_\M(t)=0$ for all $t$ small enough,  $\o_{\M}$ is continuous, 
nondecreasing, $\lim_{t\to\oo} \o_\M(t)=\oo$. Moreover, if $\M$ is a weight sequence then $\o_\M$ satisfies \hyperlink{om3}{$(\omega_3)$} and \hyperlink{om4}{$(\omega_4)$}.\para

Given a weight sequence $\M$,  we may also define the \textit{counting function} for the sequence of quotients $\bm$,
$\nu_{\m}:(0,\infty)\to\N_0$ given by
\begin{equation*}
\nu_{\m}(t):=\#\{j\in\N_0:m_j\le t\}=\max\{j\in\N:m_{j-1}\le t \}.
\end{equation*}%
This function is nondecreasing and $\lim_{t\to\oo} \nu_\m(t)=\oo$. Since $\o_\M$ and $\nu_\m$ are measurable and positive, we may consider their Matuszewska indices and their upper and lower orders
 $\b(\o_{\M})$, $\b(\nu_{\m})$, $\a(\o_{\M})$, $\a(\nu_{\m})$, $\mu(\o_{\M})$, $\mu(\nu_{\m})$, $\ro(\o_{\M})$, $\ro(\nu_{\m})$, which belong to $[0,\oo]$. Hence the results in Section~\ref{WeightFandORV} connecting these indices with the classical properties are available.\para

For a weight sequence $\M$, we recover the classical relation, between $\nu_{\m}$ and the associated function $\o_{\M}$. One has that  
\begin{equation}\label{equarelaMdetnuder}
\o_{\M}(t)=\int_0^t\frac{\nu_\m(r)}{r}\,dr,\qquad t>0.
\end{equation}


The classical correspondence  between the properties of $\m$, $\nu_\m$ and $\o_\M$ (see \cite[Lemma \ 12]{BonetMeiseMelikhov07} and \cite[Lemma \ 2.2]{RainerSchindlExtension17})
suggests that one may also find  relations for their Matuszewska indices. The counting function of $\m$ can be seen as the generalized inverse of the associated function $f_{\m}$. In a recent work of D.~Djur\v{c}i\'{c}, R.~Nikoli\'{c} and A.~Torga\v{s}ev~\cite{DjurcicNikolicTorgasev10}, the connection of O-regular variation with the  generalized inverse of a positive nondecreasing unbounded function $f:[X,\oo)\to(0,\oo)$, given by 
$$f^{\leftarrow} (x) :=\inf\{y\geq X; f(y) > x \}=\sup\{y\geq X; f(y)\leq  x \} $$
for all $x\geq f(X)$, has been partially studied. Even if some information can be inferred from their proofs, there is not an explicit correspondence between the indices of $f$ and $f^{\leftarrow}$. In our situation, we will show the correspondence between the ones of the sequence of quotients and its counting function. We will start by the Matuszewska indices $\a$ and $\b$, see Remark~\ref{rema:stabilityOmM} for the information about $\ro$ and $\mu$.

\begin{proposition}\label{prop.duality.ORV.M.nuM}
Let $\M$ be a weight sequence. Then
$$\b(\m)=\frac{1}{\a(\nu_\m)},\qquad \a(\m)=\frac{1}{\b(\nu_\m)}$$
(with the typical conventions for $0$ and $\oo$). 
\end{proposition}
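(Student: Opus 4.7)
The plan is to recognize $\nu_\m$ as (up to an additive bounded perturbation) the generalized inverse of the step function $f_\m(x):=m_{\lfloor x\rfloor-1}$ for $x\ge 1$ that underlies the sequence-index definition of the Matuszewska indices, and then to establish the classical duality for the indices of a monotone function and those of its generalized inverse. Indeed, $\nu_\m(t)=\max\{j\in\N:f_\m(j)\le t\}$, so $\nu_\m$ and $f_\m^{\leftarrow}$ differ by at most $1$, which is irrelevant for Matuszewska indices of functions tending to infinity (by Remark~\ref{rema.properties.Matuszewska.indices}).

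First I would reduce both identities to almost-monotonicity assertions. By Theorem~\ref{th:almostmonotonerep.alpha.beta}, $\a(\nu_\m)$ and $\b(\nu_\m)$ are characterized through the almost monotonicity of $\nu_\m(t)/t^{\tau}$; by Proposition~\ref{prop.nice.def.Mat.ind.ord.seq} combined with Remark~\ref{remark:shift.seq.Mat.ind.order}, $\a(\m)$ and $\b(\m)$ are characterized through the almost monotonicity of $(m_{p-1}/p^{\tau})_{p\in\N}$. The core claim then becomes: for each fixed $\b\in(0,\oo)$,
\[
(m_{p-1}/p^\b)_{p\in\N}\;\text{is almost increasing}\iff t\mapsto\nu_\m(t)/t^{1/\b}\;\text{is almost decreasing on }[m_0,\oo),
\]
and an analogous statement with both monotonicities reversed, which will yield $\a(\m)=1/\b(\nu_\m)$.

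For the forward direction, given $t_2\ge t_1\ge m_0$, set $p_i:=\nu_\m(t_i)$, so that $m_{p_i-1}\le t_i<m_{p_i}$ and $p_2\ge p_1\ge 1$. The almost-increasing hypothesis $m_{q-1}/m_{p-1}\ge c(q/p)^\b$ for $q\ge p$, applied with $p=p_1+1$ and $q=p_2$ and absorbing the index shift via $(p_1+1)^\b\le 2^\b p_1^\b$, yields $m_{p_2-1}/m_{p_1}\ge c'(p_2/p_1)^\b$. Combined with $t_2\ge m_{p_2-1}$ and $t_1<m_{p_1}$, this gives $(t_2/t_1)^{1/\b}\ge (c')^{1/\b}(p_2/p_1)$, which rearranges into the desired almost-decreasing bound for $\nu_\m(t)/t^{1/\b}$. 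For the converse I would sample $t$ at $m_{p-1}$ (where $\nu_\m(m_{p-1})\ge p$) and at $t\to m_p^-$ (where $\nu_\m(t)=p$), and reabsorb the shift $p\mapsto p+1$ to recover the almost-increasing condition on $(m_{p-1}/p^\b)$. The analogous equivalence for $\a$ follows identically with inequalities reversed, giving the second identity.

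Finally I would dispatch the boundary cases under the conventions $1/0=\oo$ and $1/\oo=0$. For example, if $\b(\m)=0$ then no $\b>0$ makes $(m_{p-1}/p^\b)$ almost increasing, so by the equivalence $\nu_\m(t)/t^{1/\b}$ fails to be almost decreasing for every $\b>0$, forcing $\a(\nu_\m)=\oo$; the other three endpoint combinations are symmetric. The main obstacle is purely combinatorial bookkeeping: tracking off-by-one shifts among $p$, $p-1$, $\lfloor t\rfloor$ and the jump points of $\nu_\m$, and verifying that the almost-monotonicity constants survive uniformly near $t=m_0$ and across the jumps. Once those shifts are absorbed, the duality is essentially the standard one between a nondecreasing unbounded function and its generalized inverse.
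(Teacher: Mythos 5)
Your proposal is correct, and it differs from the paper's argument in a way worth noting. The first halves coincide: the paper also passes from almost monotonicity of $(p^{-\ga}m_p)_p$ to almost monotonicity of $\nu_\m(t)/t^{1/\ga}$ by locating $s,t$ in the intervals $[m_p,m_{p+1})$, which is exactly your forward implication. The difference is in the converse halves. You prove, for each fixed exponent, a genuine two-sided equivalence between the almost-monotonicity of $(m_{p-1}/p^\b)_p$ and that of $\nu_\m(t)/t^{1/\b}$, by sampling the almost-monotone inequality for $\nu_\m$ at the jump points ($t_2=m_{q-1}$, $t_1\uparrow m_p$) and absorbing the index shifts; this is a self-contained generalized-inverse duality resting only on Theorem~\ref{th:almostmonotonerep.alpha.beta}, Proposition~\ref{prop.nice.def.Mat.ind.ord.seq} and Remark~\ref{remark:shift.seq.Mat.ind.order}. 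The paper instead takes the almost-decreasing property of $\nu_\m(t)/t^{\ga-\ep}$, converts it into the multiplicative estimate $\nu_\m(Q^{1/(\ga-\ep/2)}t)\le Q\,\nu_\m(t)$, transfers it to the quotients through the representation $m_p=\sup\{t\ge m_0:\nu_\m(t)\le p\}$ to get $\liminf_p m_{Qp}/(Q^{1/\ga}m_p)>1$, and then invokes Theorem~\ref{th:main.beta.sequences} (and its analogue) to conclude $\b(\m)>1/\ga$. So the paper's route is shorter because it leans on the main sequence theorems already established and, by working with the supremum representation of $m_p$, sidesteps the tie/jump-point care that your sampling argument requires; your route is more elementary and symmetric, yields the exponent-by-exponent equivalence (slightly more than the index identity itself), and would stand even without Theorems~\ref{th:main.beta.sequences} and~\ref{th:main.alpha.sequences}. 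The bookkeeping you defer (ties $m_{p-1}=m_p$, the boundary $t\ge m_0$, upgrading almost monotonicity from a tail of indices to all indices by enlarging constants) is indeed routine and does not hide a gap.
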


\begin{proof}
First we assume that $0<\ga<\b(\m)$, so, by Proposition~\ref{prop.nice.def.Mat.ind.ord.seq}, $(p^{-\ga} m_p)_{p\in\N}$  is almost increasing, then $(p^{-1} m_p{}^{1/\ga})_{p\in\N}$  is almost increasing with constant $D\geq 1$. For every $t\geq s\geq m_0$, there exist $p,q\in\N_0$ such that $q\geq p$,  $s\in[ m_{p}, m_{p+1})$ and  $t\in[m_q,m_{q+1})$. If $q=p$, we see that
$$\frac{\nu_\m(s)}{s^{1/\ga}} =  \frac{\nu_{\m} (t) }{s^{1/\ga}} \geq \frac{\nu_{\m} (t) }{t^{1/\ga}}, $$
and if $q\geq p+1\geq 1$, we get
$$\frac{\nu_\m(s)}{s^{1/\ga}} \geq \frac{p+1}{(m_{p+1})^{1/\ga}} \geq  \frac{q}{D(m_{q})^{1/\ga}}   \geq   \frac{q}{q+1}\frac{q+1}{ D t^{1/\ga}}\geq  \frac{\nu_{\m} (t) }{2 D t^{1/\ga}}, $$
that is, $\nu_{\m}(t)/t^{1/\ga}$ is almost decreasing, then $1/\ga\geq\a(\nu_\m)$ and $1/\b(\m)\geq\a(\nu_\m)$. \para 
Similarly, if $\ga>\a(\m)$, then $1/\ga\leq\b(\nu_\m)$ and $1/\a(\m)\leq\b(\nu_\m)$.  
\para
Reciprocally, if $\ga>\a(\nu_\m)$,  there exists $\ep>0$ such that $\ga-\ep>\a(\nu_\m)$, so $\ga-\ep>0$, since $\a(\nu_\m)\geq 0$. Hence,
$\nu_{\m}(t)/t^{\ga-\ep}$ is almost decreasing which implies that there exists $d\in(0,1)$ such that for every $\lambda\geq 1$ and all $t\geq m_0$  we get 
\begin{equation*} 
\nu_{\m} (t) \geq  d \nu_{\m} (\lambda t)/\lambda^{\ga-\ep}.
\end{equation*}
We fix $Q\in\N$, large enough, such that $Q^{(\ep/2)/(\ga-\ep/2)} d \geq 1$ and taking $\lambda=Q^{1/(\ga-\ep/2)}$ we see that 
\begin{equation} \label{eq.nu.almost.decreasing}
\nu_{\m} (t) Q \geq   \nu_{\m} (Q^{1/(\ga-\ep/2)} t),   \qquad   t\geq m_0.
\end{equation}
Using~\eqref{eq.nu.almost.decreasing}, for $p\in\N$, we observe that
\begin{align*}
m_p =& \sup\{t\geq m_0; \, \nu_\m(t)\leq p  \} \leq  \sup\{t\geq m_0; \, \nu_\m(Q^{1/(\ga-\ep/2)}t)\leq Q p  \}\\ 
=& Q^{-1/(\ga-\ep/2)}  \sup\{s\geq Q^{1/(\ga-\ep/2)} m_0; \, \nu_\m(s)\leq Q p  \}\leq \frac{m_{Qp}}{Q^{1/(\ga-\ep/2)}}.
\end{align*}
Hence we have shown that there exist $Q\in\N$, $Q\geq 2$ and $\de>0$ such that
$$\liminf_{p\to\oo} \frac{m_{Qp}}{Q^{1/\ga} m_{p}}\geq Q^\de>1.$$
By Theorem~\ref{th:main.beta.sequences}, we obtain that $1/\ga\leq \b(\m)$ and $1/\a(\nu_\m)\leq \b(\m)$. 
Analogously, if $0<\ga<\b(\nu_\m)$, using Theorem~\ref{th:main.beta.sequences}, we get that $1/\ga\geq \a(\m)$ and $1/\b(\nu_\m)\geq \a(\m)$.
\end{proof}

Applying Corollaries~\ref{coro:om1}, \ref{coro:om6}, \ref{coro:snq.ga1.gabeta} and~\ref{coro:charact.mg},  we recover the classical equivalence for the growth conditions of $\M$ and $\nu_\m$:

\begin{corollary}
 Let $\M$ be a weight sequence. Then
\begin{enumerate}[(i)]
 \item $\M$ has \hyperlink{mg}{(mg)} if and only if $\nu_\m$ satisfies \hyperlink{om6}{$(\omega_{\on{6}})$} .
 \item $\M$ satisfies \hyperlink{snq}{(snq)} if and only if $\nu_\m$ satisfies \hyperlink{om1}{$(\omega_{\on{1}})$}.
\end{enumerate}
\end{corollary}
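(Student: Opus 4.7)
The plan is to deduce both equivalences by chaining the previously established characterizations, using the duality between the Matuszewska indices of $\m$ and $\nu_\m$ as the bridge.

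For part (i), I would start from Corollary~\ref{coro:charact.mg}, which says $\M$ has \hyperlink{mg}{(mg)} if and only if $\a(\m)<\oo$. Next I apply Proposition~\ref{prop.duality.ORV.M.nuM}, which gives $\a(\m)=1/\b(\nu_\m)$, to rewrite the condition as $\b(\nu_\m)>0$. Finally, since $\nu_\m$ is nondecreasing with $\lim_{t\to\oo}\nu_\m(t)=\oo$, Corollary~\ref{coro:om6} translates $\b(\nu_\m)>0$ into \hyperlink{om6}{$(\omega_6)$} for $\nu_\m$.

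For part (ii), I proceed symmetrically. By Corollary~\ref{coro:snq.ga1.gabeta}(i), $\M$ satisfies \hyperlink{snq}{(snq)} if and only if $\ga(\M)>0$, and Theorem~\ref{th:equality.ga.beta.mu.omega} identifies $\ga(\M)=\b(\m)$. Invoking again Proposition~\ref{prop.duality.ORV.M.nuM}, we have $\b(\m)=1/\a(\nu_\m)$, so the condition becomes $\a(\nu_\m)<\oo$. Applying Corollary~\ref{coro:om1} to $\nu_\m$ yields the equivalence with \hyperlink{om1}{$(\omega_1)$}.

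There is no genuine obstacle in this argument since the heavy lifting is already done in the quoted results; the only thing to verify is that $\nu_\m$ meets the standing hypothesis of those corollaries (nondecreasing, tending to $\infty$), which is immediate from the definition and from $\lim_{p\to\oo}m_p=\oo$ (a property of weight sequences by Lemma~\ref{lemma.basic.properties.seq}(ii)). Thus the proof reduces to writing the chain of equivalences explicitly.
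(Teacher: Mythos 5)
Your chain of equivalences is correct and is precisely the argument the paper intends: it states this corollary as a direct application of Proposition~\ref{prop.duality.ORV.M.nuM} together with Corollaries~\ref{coro:charact.mg}, \ref{coro:snq.ga1.gabeta}, \ref{coro:om6} and~\ref{coro:om1}, exactly as you have written it. Your remark that $\nu_\m$ meets the standing hypotheses (nondecreasing, tending to $\infty$) is the only verification needed, and it is handled correctly.
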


Now, we want to connect the O-regular variation character of $\nu_\m$ and of $\o_\M$, which can be done using~\eqref{equarelaMdetnuder} and the next result that compares the O-regular variation of a function and its derivative.

\begin{theo}[\cite{BingGoldTeug89}, Th. \ 2.6.1, Coro. \ 2.6.2]\label{th.equiv.f.integral.ORV} Let $f:[X,\oo)\to(0,\oo)$ be a locally integrable function.  We define $F(x):=\int^{x}_{X} f(t)/t dt$. Then,
\begin{enumerate}[(i)]
 \item If $\a(f)<\oo$, then $\limsup_{x\to\oo} f(x)/F(x)<\oo$.
 \item If $\b(f)>0$, then $\liminf_{x\to\oo} f(x)/F(x)>0$.
 \item We have that $\a(F)\leq \limsup_{x\to\oo} f(x)/F(x)$.
 \item We have that $\b(F)\geq \liminf_{x\to\oo} f(x)/F(x)$.
\end{enumerate}
Moreover, we get that
$$0<\liminf_{x\to\oo} \frac{f(x)}{F(x)}\leq \limsup_{x\to\oo}\frac{f(x)}{F(x)} <\oo$$
if and only if $\a(f)<\oo$ and $\b(f)>0$. In this case, $\a(F)=\a(f)$ and $\b(F)=\b(f)$.
\end{theo}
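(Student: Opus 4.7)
The plan is to base everything on the almost-monotonicity characterization of the Matuszewska indices (Theorem~\ref{th:almostmonotonerep.alpha.beta}) together with a direct differential-inequality argument for $F$, whose derivative is $f(x)/x$.

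For part (i), I would fix $\alpha>\alpha(f)$, so that $t\mapsto f(t)/t^{\alpha}$ is almost decreasing on $[X,\infty)$: there exists $M>0$ with $f(t)\le M f(s)(t/s)^{\alpha}$ whenever $X\le s\le t$. Taking $s\in[x/2,x]$ and $t=x$, this yields $f(s)/s\ge f(x) s^{\alpha-1}/(Mx^{\alpha})$, so $F(x)\ge \int_{x/2}^{x} f(s)/s\,ds\ge c_{\alpha} f(x)$ for some $c_{\alpha}>0$ (a short case split on the sign of $\alpha$ handles the integral $\int_{x/2}^{x}s^{\alpha-1}ds$, which is positive in every case). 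This gives $\limsup f/F\le 1/c_{\alpha}<\infty$. Part (ii) is dual: choose $0<\beta<\beta(f)$, so $f(t)/t^{\beta}$ is almost increasing, yielding $f(s)\le Mf(x)(s/x)^{\beta}$ for $s\le x$; integrating gives $F(x)\le (M/\beta)f(x)$, whence $\liminf f/F\ge \beta/M>0$.

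For parts (iii) and (iv), the key observation is that $F'(x)=f(x)/x$ and $F(x)>0$ for $x>X$, so $(\log F)'(x)=f(x)/(xF(x))$. Setting $L=\limsup_{x\to\infty} f(x)/F(x)$ and $\ell=\liminf_{x\to\infty} f(x)/F(x)$, for any $\varepsilon>0$ and all $x$ large enough one has $(\ell-\varepsilon)/x\le (\log F)'(x)\le (L+\varepsilon)/x$. Integrating from $x$ to $\lambda x$ gives $\lambda^{\ell-\varepsilon}\le F(\lambda x)/F(x)\le \lambda^{L+\varepsilon}$ for every $\lambda\ge 1$ and every $x$ large. Passing to $\limsup$ and $\liminf$ in $x$ and then invoking Theorem~\ref{th:convenient.rep.alpha.beta} (valid since $F$ is nondecreasing, so $\beta(F)\ge 0$ by Lemma~\ref{lemma:beta.alpha.monotone.function}) shows $\alpha(F)\le L+\varepsilon$ and $\beta(F)\ge \ell-\varepsilon$, and $\varepsilon\to 0$ concludes (iii) and (iv).

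The final assertion then assembles itself. The direction $\alpha(f)<\infty,\ \beta(f)>0 \Rightarrow 0<\liminf f/F\le\limsup f/F<\infty$ is immediate from (i) and (ii). For the converse, two-sided bounds between $f$ and $F$ are exactly the equivalence \eqref{eq:equiv.functionsORV}, so by Remark~\ref{rema.properties.Matuszewska.indices} the indices of $f$ and $F$ coincide; combined with (iii) and (iv), this gives $\alpha(f)=\alpha(F)\le L<\infty$ and $\beta(f)=\beta(F)\ge \ell>0$, and simultaneously yields the equalities $\alpha(F)=\alpha(f)$, $\beta(F)=\beta(f)$.

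I expect the main subtlety to be the sign casework in (i) when estimating $\int_{x/2}^{x}s^{\alpha-1}ds$ (since $\alpha(f)$ may be negative, zero, or positive) and the verification that the differential-inequality step in (iii)--(iv) really does control the full $\limsup$/$\liminf$ defining $\alpha(F)$ and $\beta(F)$; both are handled by the representation in Theorem~\ref{th:convenient.rep.alpha.beta}, which is available because $F$ is monotone.
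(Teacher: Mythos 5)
Your proposal is correct, but note that the paper does not prove this statement at all: it is imported verbatim from the literature (cited as \cite{BingGoldTeug89}, Th.~2.6.1 and Coro.~2.6.2), so what you have written is a self-contained substitute rather than a variant of an argument in the paper. Your route is the natural one and matches the spirit of the cited source: (i)--(ii) follow from the almost-monotonicity characterization of Theorem~\ref{th:almostmonotonerep.alpha.beta} by integrating the pointwise comparison of $f(s)/s$ over $[x/2,x]$ (resp.\ $[X,x]$), and (iii)--(iv) from the identity $(\log F)'(x)=f(x)/(xF(x))$ together with Theorem~\ref{th:convenient.rep.alpha.beta}, which is indeed applicable because $F$ is nondecreasing, hence $\beta(F)\ge 0$ by Lemma~\ref{lemma:beta.alpha.monotone.function}; the equivalence and the equalities $\alpha(F)=\alpha(f)$, $\beta(F)=\beta(f)$ then come from Remark~\ref{rema.properties.Matuszewska.indices} exactly as you say. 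A few small points would tighten the write-up: justify the integration of $(\log F)'$ (since $f$ is only locally integrable, $F$ is absolutely continuous and positive on $(X,\infty)$, so $\log F(\lambda x)-\log F(x)=\int_x^{\lambda x} f(t)/(tF(t))\,dt$ holds in the Lebesgue sense); dispose explicitly of the degenerate cases $\limsup_{x\to\infty} f(x)/F(x)=\infty$ in (iii) and $\liminf_{x\to\infty} f(x)/F(x)=0$ in (iv) (both trivial, the latter because $\beta(F)\ge0$ by monotonicity); in (ii), if the almost-increasing property of $f(t)/t^{\beta}$ is only available on some $[a,\infty)$ with $a>X$, the extra constant $\int_X^a f(s)\,ds/s$ is harmless because $\beta(f)>0$ forces $f(x)\to\infty$; and before invoking Remark~\ref{rema.properties.Matuszewska.indices} restrict to an interval $[X',\infty)$, $X'>X$, on which $F>0$, as the Remark is stated for positive functions (this does not affect the indices).
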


Since $\nu_{\m}:[m_0,\oo)\to(0,\oo)$ is a locally integrable function, an easy consequence of \eqref{equarelaMdetnuder} and  Theorem~\ref{th.equiv.f.integral.ORV} is the following:

\begin{theo}\label{th:nuOmIndexRelation}
 Let $\M$ be a weight sequence. Then
 \begin{enumerate}[(i)]
  \item $\a(\nu_\m)\geq \a(\o_\M)$.
  \item $\b(\nu_\m)=\b(\o_\M)$. 
 \end{enumerate}
\end{theo}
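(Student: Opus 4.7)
The plan is to realize $\o_\M$ as the special case $F(x)=\int_{m_0}^x\nu_\m(r)/r\,dr$ of the function in Theorem~\ref{th.equiv.f.integral.ORV} with $f=\nu_\m$, which is exactly the content of \eqref{equarelaMdetnuder}. Since both $\nu_\m$ and $\o_\M$ are nondecreasing with limit $\oo$, Lemma~\ref{lemma:beta.alpha.monotone.function}.(i) already gives $\b(\nu_\m),\b(\o_\M)\geq 0$. The whole argument will rest on the elementary sandwich, valid for any $K>1$ and $t\geq m_0$ because $\nu_\m$ is nondecreasing,
$$\nu_\m(t)\log K\,\leq\,\o_\M(Kt)-\o_\M(t)\,\leq\,\nu_\m(Kt)\log K,$$
combined with the bounds $\nu_\m(t)\leq C\o_\M(t)$ (when $\a(\nu_\m)<\oo$) and $\nu_\m(t)\geq c\o_\M(t)$ (when $\b(\nu_\m)>0$) that Theorem~\ref{th.equiv.f.integral.ORV}.(i)--(ii) provide for large $t$.

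For (i), I would assume $\a(\nu_\m)<\oo$ (otherwise the claim is vacuous) and pick any $\a'>\a(\nu_\m)$. By Theorem~\ref{th:almostmonotonerep.alpha.beta}, $\nu_\m(t)/t^{\a'}$ is almost decreasing, so $\nu_\m(Kt)\leq MK^{\a'}\nu_\m(t)$. The right half of the sandwich then yields
$$\o_\M(Kt)\leq\o_\M(t)\bigl(1+CMK^{\a'}\log K\bigr),$$
which, for any $\a>\a'$ and $K$ large enough, is strictly smaller than $K^{\a}\o_\M(t)$. Theorem~\ref{th:main.th.alpha}.(iv) yields $\a(\o_\M)<\a$; letting $\a\downarrow\a(\nu_\m)$ closes (i).

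For (ii), the direction $\b(\o_\M)\geq\b(\nu_\m)$ is the mirror image: when $\b(\nu_\m)>0$, the almost increase of $\nu_\m(t)/t^{\b'}$ for $\b'<\b(\nu_\m)$ together with $\nu_\m(t)\geq c\o_\M(t)$ and the \emph{left} half of the sandwich produces $\o_\M(Kt)\geq\o_\M(t)\bigl(1+c'(K^{\b'}-1)\bigr)$, whence $\b(\o_\M)>\b$ for any $\b<\b'$ by Theorem~\ref{th:main.th.beta}.(iii). For the converse $\b(\nu_\m)\geq\b(\o_\M)$, I would choose $\b'<\b(\o_\M)$, apply the almost increase of $\o_\M(t)/t^{\b'}$ to get $\o_\M(Kt)\geq cK^{\b'}\o_\M(t)$, and insert this in the right half of the sandwich together with the lower bound $\o_\M(t)\geq\nu_\m(t/e)$ (the $K=e$ instance of the left half); after renaming variables this will produce
$$\nu_\m(Lt)\,\geq\,\frac{c_1 L^{\b'}}{\log L}\,\nu_\m(t)\qquad\text{for }L\text{ large},$$
which exceeds $L^{\b}$ for any $\b<\b'$, so Theorem~\ref{th:main.th.beta}.(iii) gives $\b(\nu_\m)>\b$.

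The main obstacle is this last direction $\b(\nu_\m)\geq\b(\o_\M)$. The ``moreover'' part of Theorem~\ref{th.equiv.f.integral.ORV} delivers equality of indices for free only when both $\a(\nu_\m)<\oo$ and $\b(\nu_\m)>0$, so the general edge cases have to be handled by the sandwich itself. The logarithmic factor $1/\log L$ appearing there is not benign: the trick is to separate $\b<\b'<\b(\o_\M)$ strictly, so that the polynomial excess $L^{\b'-\b}$ swallows the logarithm.
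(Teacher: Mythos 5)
Your proposal is correct, and for the delicate half of (ii) it takes a genuinely different route from the paper. Part (i) and the inequality $\b(\nu_\m)\le\b(\o_\M)$ are proved in the paper with essentially the same ingredients you use: the representation \eqref{equarelaMdetnuder}, Theorem~\ref{th.equiv.f.integral.ORV}.(i)--(ii) to compare $\nu_\m$ and $\o_\M$ pointwise, the bound $\o_\M(et)\ge\nu_\m(t)$, and the characterizations in Theorems~\ref{th:main.th.alpha} and~\ref{th:main.th.beta} (the paper phrases (i) via almost decrease of $\o_\M(t)/t^\a$ rather than via condition (iv), an immaterial difference). Where you diverge is the converse inequality $\b(\nu_\m)\ge\b(\o_\M)$: the paper argues that $\b(\o_\M)>0$ forces \hyperlink{om6}{$(\omega_6)$} for $\o_\M$ (Corollary~\ref{coro:om6}), hence \hyperlink{mg}{(mg)} for $\M$ by Komatsu's Prop.~3.6, hence $\sup_p m_p/M_p^{1/p}\le A$ by Lemma~\ref{lemma.basic.properties.seq}, which yields the pointwise bound $\o_\M(t)\le\nu_\m(t)\log A$ through the identity $\o_\M(m_{p+1})=(p+1)\log\bigl(m_{p+1}/M_{p+1}^{1/(p+1)}\bigr)$; you instead stay entirely inside the integral representation, extracting $\nu_\m(Lt)\ge c_1L^{\b'}\nu_\m(t)/\log L$ from the sandwich and the almost increase of $\o_\M(t)/t^{\b'}$, and absorbing the logarithm by the strict separation $\b<\b'$. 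Your version is more self-contained (no appeal to Komatsu or to the specific lc-sequence structure of $\o_\M$) and in effect proves the general statement $\b(F)=\b(f)$ for any nondecreasing $f$ with $F(x)=\int f(r)\,dr/r$, without the hypothesis $\a(f)<\oo$ in the ``moreover'' part of Theorem~\ref{th.equiv.f.integral.ORV}; the paper's detour, in exchange, records the useful explicit comparison $\o_\M\le\nu_\m\log A$ under \hyperlink{mg}{(mg)}. One small presentational caveat: in your easy half of (ii), the bare left half of the sandwich only gives a gain $1+c\log K$, which would not beat $K^\b$ for all $\b<\b(\nu_\m)$; to reach $1+c'(K^{\b'}-1)$ you must feed the almost increase of $\nu_\m(t)/t^{\b'}$ into the integral $\int_t^{Kt}\nu_\m(r)\,dr/r$ (or evaluate the sandwich at an intermediate point such as $Kt/2$), which your stated ingredients do allow, so this is a matter of wording rather than a gap.
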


\begin{proof}
 (i) If $\a(\nu_\m)=\oo$, the result is trivial. If $\a(\nu_\m)<\oo$, we fix $\a>\a(\nu_\m)$, then $t\mapsto \nu_\m(t)/t^\a$ is almost decreasing. By  \eqref{equarelaMdetnuder}, for every $y\geq x\geq m_0$ we see that
 $$\frac{\o_{\M}(y)}{y^\a}\leq \frac{1}{y^\a} \left( \int^x_{m_0} \nu_\m(u) \frac{d u}{u} +
 \int^y_{x} \nu_\m(u)\frac{d u}{u}  \right) \leq \frac{\o_\M (x)}{x^\a} + \frac{C}{\a} \frac{\nu_\m(x)}{x^\a}.$$
 By Theorem~\ref{th.equiv.f.integral.ORV}.(i), there exists $D>0$ such that $\nu_\m(t)\leq D \o_\M (t)$ for $t$ large enough, so  $\a\geq\a(\o_\M)$. \para \noindent

 (ii) First we will show that $\b(\nu_\m)\leq \b(\o_\M)$. If $\b(\nu_\m)= 0$, the inequality holds. Assume that $\b(\nu_\m)>\b>0$, by Theorem~\ref{th.equiv.f.integral.ORV}.(ii) there exists $D,t_0>0$ such that
 $\nu_\m(t)\geq D \, \o_\M(t)$ for $t\geq t_0$ and, by Theorem~\ref{th:main.th.beta}, $\liminf_{t\to\oo} \nu_\m(\lambda t)\lambda^{-\b} (\nu_\m(t))^{-1}>  e^\b/D$ for all $\lambda\geq \lambda_0$. 
 From  \eqref{equarelaMdetnuder} and the monotonicity of $\nu_\m$ for every $t>0$ we get
\begin{equation}\label{eq:OmNuineq}
  \omega_\M(et)=\int_{0}^{et}\nu_\m(u)\frac{du}{u}\geq\int_{t}^{et}\nu_\m(u)\frac{du}{u}\ge \nu_\m(t).
\end{equation}
Hence
$$ \liminf_{t\to\oo} \frac{\o_{\M} (\lambda e t)}{ (e\lambda)^\b \o_\M(t)}\geq \liminf_{t\to\oo} \frac{D \nu_\m(\lambda t)}{ e^\b \lambda^\b \nu_\m(t)} > 1, $$
and, by  Theorem~\ref{th:main.th.beta}, we conclude that $\b<\b(\o_\M)$ so $\b(\nu_\m)\leq\b(\o_\M)$.\para

Secondly, we will see that  $\b(\nu_\m)\geq \b(\o_\M)$. If $\b(\o_\M)=0$, the result is trivial. If $\b(\o_\M)>0$, by Corollary~\ref{coro:om6}, $\omega_\M$ has \hyperlink{om6}{$(\omega_6)$}, and by \cite[Prop. 3.6]{Komatsu73} this implies that $\M$ has \hyperlink{mg}{(mg)}. Then, by Lemma~\ref{lemma.basic.properties.seq}, this implies that there exists $A>1$ such that $\sup_{p\in\N} m_p/M^{1/p}_p\leq A <\oo$.
Hence, for every $t\geq m_0$, there exists $p\in\N_0$ such that $t\in[m_p,m_{p+1})$ and we have that
$$\o_{\M}(t)\leq \o_{\M} (m_{p+1}) = (p+1) \log\left( \frac{m_{p+1}}{M^{1/(p+1)}_{p+1}}\right) \leq 
 (p+1) \log A= \nu_{\m} (t)\log A.$$
Assume that $\b(\o_\M)>\b>0$,  by Theorem~\ref{th:main.th.beta}, $\liminf_{t\to\oo} \o_\M(\lambda e t) (\lambda )^{-\b} (\o_\M(et))^{-1}>e^\b \log A$ for all $\lambda\geq \lambda_0$, so
$$ \liminf_{t\to\oo} \frac{\nu_{\m} (\lambda e t)}{ (\lambda e)^\b \nu_\m(t)}\geq \liminf_{t\to\oo} \frac{1}{e^\b \log A}\frac{\o_\M(\lambda e t)}{ \lambda^\b \o_\M(et)} > 1, $$
and, by  Theorem~\ref{th:main.th.beta}, we conclude that $\b<\b(\nu_\m)$ then $\b(\nu_\m)\geq\b(\o_\M)$. 
\end{proof}

\begin{corollary}\label{coro:nuOmEquiv}
Let $\M$ be a weight sequence. Then the following are equivalent:
\begin{enumerate}[(i)]
 \item $\displaystyle 0<\liminf_{t\to\oo} \frac{\nu_{\m}(t)}{\o_{\M}(t)}\leq \limsup_{t\to\oo}\frac{\nu_{\m}(t)}{\o_{\M}(t)} <\oo$,
 \item $\b(\nu_{\m})>0$ and $\a(\nu_{\m})<\oo$,
 \item $\b(\o_{\M})>0$ and $\a(\o_{\M})<\oo$.
\end{enumerate}
In this case, $\b(\o_{\M})=\b(\nu_{\m})$ and $\a(\o_{\M})=\a(\nu_{\m})$.
\end{corollary}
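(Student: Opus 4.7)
The plan is to derive the three-way equivalence by applying Theorem~\ref{th.equiv.f.integral.ORV} with $f=\nu_\m$ and $F=\omega_\M$, an identification justified by the integral representation~\eqref{equarelaMdetnuder}, and combining it with the one-sided inequalities of Theorem~\ref{th:nuOmIndexRelation}.

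First, the equivalence (i)$\Leftrightarrow$(ii) is the ``Moreover'' part of Theorem~\ref{th.equiv.f.integral.ORV} read off verbatim for $f=\nu_\m$, $F=\omega_\M$. The implication (ii)$\Rightarrow$(iii) is immediate from Theorem~\ref{th:nuOmIndexRelation}, which already provides $\beta(\omega_\M)=\beta(\nu_\m)$ and $\alpha(\omega_\M)\le\alpha(\nu_\m)$.

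The main content is the reverse implication (iii)$\Rightarrow$(i), which I would prove by extracting the two one-sided bounds separately. For the lower bound: from $\beta(\omega_\M)>0$, Corollary~\ref{coro:om6} yields \hyperlink{om6}{$(\omega_6)$} for $\omega_\M$, and \cite[Prop.~3.6]{Komatsu73} then provides \hyperlink{mg}{(mg)} for $\M$; Lemma~\ref{lemma.basic.properties.seq} therefore delivers $A:=\sup_{p\in\N}m_p/M_p^{1/p}<\infty$, and the argument reproduced at the end of the proof of Theorem~\ref{th:nuOmIndexRelation}(ii) gives $\omega_\M(t)\le\nu_\m(t)\log A$ for $t\ge m_0$, hence $\liminf_{t\to\infty}\nu_\m(t)/\omega_\M(t)>0$. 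For the upper bound: from $\alpha(\omega_\M)<\infty$ and Corollary~\ref{coro:om1}, $\omega_\M$ satisfies \hyperlink{om1}{$(\omega_1)$}; a finite iteration yields $\omega_\M(et)\le C\omega_\M(t)+C$ for $t$ large, which combined with the inequality $\nu_\m(t)\le\omega_\M(et)$ exhibited in~\eqref{eq:OmNuineq} produces $\limsup_{t\to\infty}\nu_\m(t)/\omega_\M(t)<\infty$. The two bounds together are precisely (i).

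Once (i) is in force, $\nu_\m$ and $\omega_\M$ satisfy~\eqref{eq:equiv.functionsORV}, so Remark~\ref{rema.properties.Matuszewska.indices} delivers the asserted equalities $\alpha(\nu_\m)=\alpha(\omega_\M)$ and $\beta(\nu_\m)=\beta(\omega_\M)$, completing the ``in this case'' statement. The only delicate point is the bookkeeping of additive constants when iterating \hyperlink{om1}{$(\omega_1)$} to control $\omega_\M(et)$; these constants are harmless since $\omega_\M(t)\to\infty$ and disappear in the ratio $\nu_\m(t)/\omega_\M(t)$ as $t\to\infty$, so no further work is required.
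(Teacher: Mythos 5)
Your proof is correct and follows essentially the same route as the paper: (i)$\Leftrightarrow$(ii) is read off from Theorem~\ref{th.equiv.f.integral.ORV} applied to $f=\nu_{\m}$, $F=\o_{\M}$ via \eqref{equarelaMdetnuder}, the passage to (iii) uses Theorem~\ref{th:nuOmIndexRelation}, and for (iii)$\Rightarrow$(i) the limsup bound comes from \eqref{eq:OmNuineq} together with $\a(\o_{\M})<\oo$ (equivalently $(\omega_1)$), while the final index equalities follow from the equivalence of $\nu_{\m}$ and $\o_{\M}$ in the sense of \eqref{eq:equiv.functionsORV}. The only cosmetic difference is that for the liminf you inline the (mg)-based pointwise estimate $\o_{\M}(t)\le\nu_{\m}(t)\log A$ from the proof of Theorem~\ref{th:nuOmIndexRelation}(ii), whereas the paper simply quotes $\b(\nu_{\m})=\b(\o_{\M})>0$ and invokes Theorem~\ref{th.equiv.f.integral.ORV}.(ii).
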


\begin{proof}
 (i) $\Leftrightarrow$ (ii) Immediate from Theorem~\ref{th.equiv.f.integral.ORV}. \para\noindent
 (i) $\Rightarrow$ (iii)   Again by Theorem~\ref{th.equiv.f.integral.ORV}, we have that $\b(\o_{\M})=\b(\nu_{\m})>0$ and $\a(\o_{\M})=\a(\nu_{\m})<\oo$.\para\noindent
 (iii) $\Rightarrow$ (i) By Theorem~\ref{th:nuOmIndexRelation}, we see that $\b(\nu_\m)=\b(\o_\M)>0$ then, by Theorem~\ref{th.equiv.f.integral.ORV}.(ii), we see that $ \liminf_{t\to\oo} \nu_{\m}(t)/\o_{\M}(t)>0$.
 From  \eqref{equarelaMdetnuder} and the monotonicity of $\nu_\m$ we obtain~\eqref{eq:OmNuineq}. 
 Since $\a(\o_\M)<\oo$, by Theorem~\ref{th:main.th.alpha}, we get
 $$ \limsup_{t\to\oo}\frac{\nu_{\m}(t)}{\o_{\M}(t)} \leq  \limsup_{t\to\oo}\frac{\omega_\M(et)}{\o_{\M}(t)} <\oo.$$
\end{proof}

Finally, we want to show the connection between the indices of $\o_\M$ and the ones of $\M$.

\begin{corollary}\label{coro:IndMnuMOmM}
 Let $\M$ be a weight sequence. Then
 \begin{enumerate}[(i)]
  \item $\ga(\M)=\b(\m)=1/\a(\nu_\m)\leq 1/\a(\o_\M)=\ga(\o_\M)$.
  \item  $\a(\m)=1/\b(\nu_\m)=1/\b(\o_\M)$.
  \item If $\M$ has in addition \hyperlink{mg}{(mg)}, then $\ga(\M)=\ga(\o_\M)$.
 \end{enumerate}
\end{corollary}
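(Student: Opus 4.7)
The plan is to chain together the index-translation results built up in the previous two subsections, so that the three assertions become transparent once the right tools are cited. For (i), I would simply concatenate equalities: Theorem~\ref{th:equality.ga.beta.mu.omega} gives $\ga(\M)=\b(\m)$, Proposition~\ref{prop.duality.ORV.M.nuM} rewrites this as $1/\a(\nu_\m)$, Theorem~\ref{th:nuOmIndexRelation}.(i) supplies the only inequality $\a(\nu_\m)\ge\a(\o_\M)$ (which inverts to $\le$ between reciprocals), and Lemma~\ref{lemma.alpha.gamma} closes the chain with $1/\a(\o_\M)=\ga(\o_\M)$. The conventions $1/0=\oo$ and $1/\oo=0$, which are in force by Remark~\ref{rema:inf.sup.empty} and the explicit statement in Lemma~\ref{lemma.alpha.gamma}, keep the chain valid at the extreme values as well. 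For (ii) there is nothing more to do than combining Proposition~\ref{prop.duality.ORV.M.nuM} (yielding $\a(\m)=1/\b(\nu_\m)$) with Theorem~\ref{th:nuOmIndexRelation}.(ii) (yielding $\b(\nu_\m)=\b(\o_\M)$).

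For (iii) the goal is to upgrade the inequality in (i) to an equality under the extra hypothesis \hyperlink{mg}{(mg)}, and the natural tool is Corollary~\ref{coro:nuOmEquiv}. By Corollary~\ref{coro:charact.mg}, \hyperlink{mg}{(mg)} is equivalent to $\a(\m)<\oo$, which in turn, via part (ii) just proved, is equivalent to $\b(\o_\M)=\b(\nu_\m)=1/\a(\m)>0$. With this positivity at hand, the equivalences (ii) $\Leftrightarrow$ (iii) in Corollary~\ref{coro:nuOmEquiv} reduce in content to $\a(\nu_\m)<\oo\Leftrightarrow\a(\o_\M)<\oo$, together with $\a(\o_\M)=\a(\nu_\m)$ in the finite case. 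A short case analysis then yields $\a(\o_\M)=\a(\nu_\m)$ in both the finite and the infinite regime, and the identity $\ga(\M)=\ga(\o_\M)$ follows from (i) and Lemma~\ref{lemma.alpha.gamma}.

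The main (mild) obstacle is precisely this case split in (iii): Corollary~\ref{coro:nuOmEquiv} states the equality $\a(\o_\M)=\a(\nu_\m)$ only in the finite regime, so one has to check separately that under \hyperlink{mg}{(mg)} the configuration $\a(\nu_\m)=\oo$ with $\a(\o_\M)<\oo$ cannot occur. This is however immediate from the contrapositive of the same equivalence: once $\b(\o_\M)=\b(\nu_\m)>0$ has been established, a finite $\a(\o_\M)$ would make assertion (iii) of Corollary~\ref{coro:nuOmEquiv} hold and would therefore force (ii) there too, giving $\a(\nu_\m)<\oo$ after all. Hence no genuine new inequality has to be proved, and the corollary is essentially a bookkeeping consequence of the duality results already in hand.
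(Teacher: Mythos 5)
Your proposal is correct and follows essentially the same route as the paper: parts (i) and (ii) are obtained by chaining Theorem~\ref{th:equality.ga.beta.mu.omega}, Proposition~\ref{prop.duality.ORV.M.nuM}, Theorem~\ref{th:nuOmIndexRelation} and Lemma~\ref{lemma.alpha.gamma}, and part (iii) is handled exactly as in the paper by deducing $\b(\o_\M)>0$ from (mg) via Corollary~\ref{coro:charact.mg} and part (ii), then invoking Corollary~\ref{coro:nuOmEquiv} in the finite regime and the inequality of (i) when $\a(\o_\M)=\oo$. Your explicit justification that $\a(\nu_\m)=\oo$ with $\a(\o_\M)<\oo$ cannot occur is a slightly more detailed bookkeeping of the same case split the paper performs.
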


\begin{proof}
 (i) and (ii) follow from Lemma~\ref{lemma.alpha.gamma}, Proposition~\ref{prop.duality.ORV.M.nuM}, Theorems~\ref{th:equality.ga.beta.mu.omega} and~\ref{th:nuOmIndexRelation}.\para\noindent
 
 (iii) If $\a(\o_\M)=\oo$, then by Theorem~\ref{th:nuOmIndexRelation} $\a(\nu_\m)=\oo$ and $\ga(\M)=\ga(\nu_\m)=\ga(\o_\M)=0$. Assume that $\a(\o_\M)<\oo$. Since $\M$ has \hyperlink{mg}{(mg)}, by Corollary~\ref{coro:charact.mg}, $\a(\m)<\oo$ so, by (ii) we also have that $\b(\o_\M)>0$. Hence we can apply Corollary~\ref{coro:nuOmEquiv} and deduce that $\a(\o_\M)=\a(\nu_\m)=1/\b(\m)$, as desired.
\end{proof}

We close this section with several observations.

\begin{remark}\label{rema:stabilityOmM}
In the book of A. A. Goldberg and I. V. Ostrovskii~\cite[Ch.\ 2, Th.\ 1.1]{valuedistribution}, it is shown that 
$\o_\M$ and $\nu_\m$ belong to the same growth category and from their computations it can be deduced  that
$\ro(\o_\M)=\ro(\nu_\m)$ and $\mu(\o_\M)=\mu(\nu_\m)$.\para

In~\cite[Th.\ 3.5]{JimenezSanzSchindlLCSeqandPO}, the authors have shown that for any weight sequence $\M$ the following equality holds  
$$\ro(\o_\M)=\ro(\nu_\m)=\frac{1}{\mu(\m)}=\frac{1}{\o(\M)}.$$
With a more elaborated argument, it is also possible to show that $\mu(\o_\M)=\mu(\nu_\m)=1/\ro(\m)$.\para

As a consequence, we observe that if $\M$ and $\L$ are weight sequences with $\M\approx\L$ then there exists 
$A\geq 1$ such that
$$\o_\L (A^{-1} t)\leq \o_{\M} (t)\leq \o_\L (At), \qquad t>0,$$
and we can show that $\ro(\o_{\M})=\ro(\o_{\L})$ so $\o(\M)=\o(\L)$.
Together with Corollary~\ref{coro:stability.gamma.weak.equiv}, this means that one can extend Lemma~\ref{lemma:strong.equiv.seq.Mat.ind.order}, that is, we have stability for $\approx$, for the two relevant indices $\ga(\M)$ and $\o(\M)$ in the study of the surjectivity and injectivity of the Borel map in ultraholomorphic classes in sectors, see~\cite{JimenezSanzSchindlInjectSurject}.
\end{remark}

\begin{remark}
 Corollary~\ref{coro:IndMnuMOmM} allows us to recover the following  well-known relations between the conditions of $\M$ and $\o_\M$ and explains why there is not a complete symmetry. For any weight sequence $\M$ we get
 \begin{enumerate}[(i)]
  \item $\M$ has \hyperlink{mg}{(mg)} if and only if $\omega_M$ has \hyperlink{om6}{$(\omega_6)$}. 
  \item If $\m$ satisfies \hyperlink{gar}{$(\gamma_1)$}, then $\omega_\M$ has \hyperlink{omsnq}{$(\omega_{\on{snq}})$}. 
  \item If $\M$ satisfies \hyperlink{snq}{(snq)}, then $\omega_\M$ has \hyperlink{om1}{$(\omega_1)$}. 
 \end{enumerate}
Moreover, if $\M$ has \hyperlink{mg}{(mg)}, then the implications in (ii) and (iii) can be reversed.
\end{remark}

\begin{remark}
  The example constructed in Section \ref{counterexample} provides a weight sequence $\M$ such that $\gamma(\M)=0$ and $\gamma(\omega_\M)=\infty$, which shows that the assumption \hyperlink{omsnq}{$(\omega_{\on{snq}})$} for $\omega_\M$ is really weaker than condition \hyperlink{gar}{$(\gamma_1)$} for $\m$ underlining the importance of the difference between $\gamma(\M)$ and $\gamma(\omega_\M)$ in general. 
\end{remark}

\subsection{A formula relationg the indices of \texorpdfstring{$\o_{\widehat{\M}}$}{OmHATM}  and \texorpdfstring{$\o_{\M}$}{OmM}}\label{subsect.OmM.OmhatM}

As commented before Corollary~\ref{coro:widehat.M.beta}, sometimes the ultradifferentiable classes are defined replacing the sequence $\M$ by $\widehat{\M}:=(p!M_p)_{p\in\N_0}$, see also~\eqref{eq:estimation.f.factorialIN} and~\eqref{eq:estimation.f.factorialOUT}, and the requirement of $\M$ being a weight sequence is substituted by the weaker condition  of $\widehat{\M}$ being a weight sequence. The relation between the indices of both sequences is stated in Proposition~\ref{prop.roots.gevrey.multiply.Mat.ind.order}. However, the connection of the indices of $\o_\M$ and $\o_{\widehat{\M}}$, which is motivated by  the study of the surjectivity of the Borel map, see~\cite{JimenezSanzSchindlExtensionReal, JimenezSanzSchindlLaplace}, does not follow directly from the theory of O-regular variation. The connection of the index $\ga$ of both functions is based on
the study of the Legendre conjugates carried out in subsection~\ref{subsect.LegendreconjugateIndex}. 
For this purpose, we recall the next property of $\omega^{\star}$,  for its proof we refer to \cite[Lemma 3.1.(ii) and (3.5)]{JimenezSanzSchindlLaplace}.

\begin{lemma}\label{lemma:OmMOmhatMupperconjugate}
Let $\M=(M_p)_{p\in N_0}$ be any sequence of positive real numbers with $M_0=1$ and $\lim_{p\rightarrow\infty}(M_p)^{1/p}=\infty$. Then for all $s>0$ 
\begin{equation*}
(\omega_{\widehat{\M}}^{\star}(s))^\iota\le
\omega_{\M}\left(s\right)\le
(\omega_{\widehat{\M}}^{\star}\left(se\right))^\iota .
\end{equation*}
\end{lemma}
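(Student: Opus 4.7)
The plan is to compute $\omega_{\widehat{\M}}^{\star}(u)$ in closed form via a double supremum and then sandwich the resulting expression between two reciprocal shifts of $\omega_{\M}$, using only the elementary Stirling-type bounds $(p/e)^{p}\le p!\le p^{p}$, valid for every $p\in\N_{0}$ under the conventions $0\log 0=0$ and $0^{0}=1$.

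First I would unpack the definitions of $\omega_{\widehat{\M}}$ and of the upper Legendre conjugate, and swap the outer and inner suprema, to obtain
$$
\omega_{\widehat{\M}}^{\star}(u)=\sup_{t\ge 0}\sup_{p\in\N_{0}}\bigl[p\log t-\log p!-\log M_{p}-ut\bigr]=\sup_{p\in\N_{0}}\Bigl[\sup_{t\ge 0}(p\log t-ut)-\log p!-\log M_{p}\Bigr].
$$
A direct calculation (set the $t$-derivative to zero) gives $\sup_{t\ge 0}(p\log t-ut)=p\log p-p-p\log u$ for $p\ge 1$ and $=0$ for $p=0$; with the stated conventions both cases are captured by
$$
\omega_{\widehat{\M}}^{\star}(u)=\sup_{p\in\N_{0}}\bigl[(p\log p-p-\log p!)+p\log(1/u)-\log M_{p}\bigr].
$$

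Next I would feed in the two Stirling estimates, which are equivalent to $-p\le p\log p-p-\log p!\le 0$. The upper half $p\log p-p-\log p!\le 0$ yields $\omega_{\widehat{\M}}^{\star}(u)\le\sup_{p}[p\log(1/u)-\log M_{p}]=\omega_{\M}(1/u)$; specializing $u=1/s$ gives the left inequality $(\omega_{\widehat{\M}}^{\star}(s))^{\iota}=\omega_{\widehat{\M}}^{\star}(1/s)\le\omega_{\M}(s)$. The lower half $p\log p-p-\log p!\ge -p$, together with the identity $p\log(1/u)-p=p\log(1/(eu))$, yields $\omega_{\widehat{\M}}^{\star}(u)\ge\sup_{p}[p\log(1/(eu))-\log M_{p}]=\omega_{\M}(1/(eu))$; specializing $u=1/(es)$ gives the right inequality $\omega_{\M}(s)\le\omega_{\widehat{\M}}^{\star}(1/(es))=(\omega_{\widehat{\M}}^{\star}(se))^{\iota}$.

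I do not anticipate a real obstacle: the whole proof reduces to one elementary Legendre transform and two classical Stirling bounds. The only point deserving brief verification is the $p=0$ term, where under the stated conventions the inner supremum and both Stirling bounds collapse to $0\le 0\le 0$, so the estimate holds uniformly over $p\in\N_{0}$ and no separate treatment of small indices is needed.
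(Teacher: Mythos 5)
Your proof is correct: exchanging the two suprema, computing $\sup_{t\ge 0}(p\log t-ut)$ explicitly, and inserting the bounds $(p/e)^p\le p!\le p^p$ yields exactly $\omega_{\widehat{\M}}^{\star}(1/s)\le\omega_{\M}(s)\le\omega_{\widehat{\M}}^{\star}(1/(se))$, which is the stated sandwich under the paper's reading of the notation $\iota$ (as confirmed by its later use, where the lemma plus $(\omega_1)$ gives $(\omega_{\widehat{\M}}^{\star})^{\iota}\sim\omega_{\M}$). The paper itself offers no proof but defers to a cited lemma, and your self-contained Legendre-transform-plus-Stirling computation is essentially the standard argument behind that reference, with the $p=0$ term and the positivity of $u=1/s$, $u=1/(es)$ correctly accounted for.
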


Using the previous Lemma we can show:

\begin{corollary}\label{coro:OmhatMOmMgamma}
Let $\M=(M_p)_{p\in N_0}$ be any sequence of positive real numbers with $M_0=1$ such that $\widehat{\M}=(p!M_p)_{p\in\N_0}$ is a weight sequence and $\lim_{p\to\oo} (M_p)^{1/p}=\oo$. 
Assume that $\gamma(\omega_{\widehat{\M}})>1$. Then we obtain
$$\gamma(\omega_{\widehat{\M}})=\gamma(\omega_\M)+1.$$
\end{corollary}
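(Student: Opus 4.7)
The plan is to reduce the statement to Corollary~\ref{coro:indices.upperconjugate.gama.greater.than1}, applied to $\omega_{\widehat{\M}}$, combined with the comparison in Lemma~\ref{lemma:OmMOmhatMupperconjugate} and the stability of the index $\ga$ under $\hyperlink{sim}{\sim}$. Since $\widehat{\M}$ is a weight sequence, $\omega_{\widehat{\M}}$ is continuous, nondecreasing and tends to infinity, so the hypothesis $\ga(\omega_{\widehat{\M}})>1$ places it in the scope of Corollary~\ref{coro:indices.upperconjugate.gama.greater.than1}, which gives
$$\ga(\omega_{\widehat{\M}}) = \ga\bigl((\omega_{\widehat{\M}}^\star)^\iota\bigr) + 1.$$
Write $\tau:=(\omega_{\widehat{\M}}^\star)^\iota$. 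Since $\omega_{\widehat{\M}}^\star$ is nonincreasing with $\lim_{s\to 0}\omega_{\widehat{\M}}^\star(s)=\infty$, $\tau$ is nondecreasing with $\lim_{s\to\infty}\tau(s)=\infty$, so $\ga(\tau)$ is defined and, by the display above, $\ga(\tau)=\ga(\omega_{\widehat{\M}})-1>0$. It therefore suffices to show $\ga(\omega_\M)=\ga(\tau)$.

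For this step I would exploit that Lemma~\ref{lemma:OmMOmhatMupperconjugate} provides a two-sided sandwich of the shape
$$\tau(s)\,\le\,\omega_\M(s)\,\le\,\tau(cs),\qquad s>0,$$
with $c$ an absolute constant (of order $e$). To upgrade this into the equivalence $\omega_\M\hyperlink{sim}{\sim}\tau$ I would use that $\tau$ is O-regularly varying: indeed $\ga(\tau)>0$ gives $\a(\tau)=1/\ga(\tau)<\infty$ by Lemma~\ref{lemma.alpha.gamma}, and since $\tau$ is nondecreasing, Lemma~\ref{lemma:beta.alpha.monotone.function}.(i) forces $\tau\in ORV$. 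Hence $\tau^{\on{up}}(c)<\infty$, so there exist $C_0\ge 1$ and $s_0>0$ with $\tau(cs)\le C_0\tau(s)$ for all $s\ge s_0$; for $0<s<s_0$ the trivial bound $\tau(cs)\le\tau(cs_0)$ is absorbed into an additive constant. Combined with the sandwich, this yields the equivalence $\omega_\M\hyperlink{sim}{\sim}\tau$, and the stability of $\ga$ under $\hyperlink{sim}{\sim}$ (property (ii) following Lemma~\ref{lemma.alpha.gamma}) then delivers $\ga(\omega_\M)=\ga(\tau)$.

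Substituting this equality back into the first displayed identity gives the desired conclusion $\ga(\omega_{\widehat{\M}})=\ga(\omega_\M)+1$. The main obstacle is precisely the promotion of the one-sided bound from Lemma~\ref{lemma:OmMOmhatMupperconjugate} to a genuine $\hyperlink{sim}{\sim}$ equivalence; that is exactly the reason the strict inequality $\ga(\omega_{\widehat{\M}})>1$ (rather than merely $>0$) is imposed, as it is what makes $\tau$ O-regularly varying and thereby turns the multiplicative factor $c$ inside $\tau(cs)$ into a harmless constant.
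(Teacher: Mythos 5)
Your argument is correct and follows essentially the same route as the paper: apply Corollary~\ref{coro:indices.upperconjugate.gama.greater.than1} to $\omega_{\widehat{\M}}$, then use the sandwich from Lemma~\ref{lemma:OmMOmhatMupperconjugate} together with the fact that $\ga((\omega_{\widehat{\M}}^\star)^\iota)>0$ (equivalently \hyperlink{om1}{$(\omega_1)$}, i.e.\ O-regular variation for this nondecreasing function) to upgrade it to $\omega_\M\hyperlink{sim}{\sim}(\omega_{\widehat{\M}}^\star)^\iota$, and conclude by the stability of $\ga$ under $\hyperlink{sim}{\sim}$. Your explicit handling of the multiplicative factor via $\tau^{\on{up}}(c)<\infty$ just spells out what the paper leaves implicit when invoking \hyperlink{om1}{$(\omega_1)$}.
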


\begin{proof}
By Corollary~\ref{coro:indices.upperconjugate.gama.greater.than1} applied to $\sigma=\o_{\widehat{\M}}$, we see that $\ga(\o_{\widehat{\M}})=\ga((\o^{\star}_{\widehat{\M}})^\iota)+1$. This means that
$\ga((\o^{\star}_{\widehat{\M}})^\iota)>0$, so $(\o^{\star}_{\widehat{\M}})^\iota$ satisfies $(\omega_1)$.\para

By Lemma~\ref{lemma:OmMOmhatMupperconjugate} this implies that $(\o^{\star}_{\widehat{\M}})^\iota\sim \o_\M$ and, consequently, since both functions are nondecreasing and tend to $\oo$ at $\oo$,  $\gamma(\omega_\M)=\gamma((\o^{\star}_{\widehat{\M}})^\iota)$.
\end{proof}

\subsection{Strongly regular sequences}\label{subsect:SRS}

The ultradifferentiable spaces defined in terms of a strongly regular sequence  are known to
be well-behaved with respect to extension and division properties, see~\cite{Thilliez03} and the references therein. Moreover the classical result of J.~Bonet, R.~Meise and S.N.~Melikhov~\cite[Th.\ 14]{BonetMeiseMelikhov07}, stated in a more general framework in~\cite[Sect.\ 6]{Schindl16} by the third author, can be translated into the following form: the ultradifferentiable space defined, as in \eqref{eq:estimation.f.factorialIN}, by a weight sequence $\widehat{\M}$ satisfying $\b(\widehat{\m})>0$  can be defined in terms of the associated function $\o_{\widehat{\M}}$ if and only if $\a(\widehat{\m})<\oo$. Hence the spaces coincide whenever $\widehat{\M}$ is strongly regular.\para 

With the information in Remark~\ref{rema.Ch.Th.ORVSEq}, Corollaries~\ref{coro:snq.ga1.gabeta},~\ref{coro:charact.mg} and~\ref{coro:nuOmEquiv} we obtain the next characterization of strongly regular sequences which helps us to understand their nature.

\begin{corollary}\label{Coro.SRS.Charact}
 Let  $\M$ be a weight sequence. The following are equivalent:
\begin{itemize}
\item[(i)] $\M$ is strongly regular,
\item[(ii)] There exists  $k\in\N$, $k\ge 2$, such that 
$$ 1<\liminf_{p\to\oo} \frac{m_{kp}}{m_p} \leq \limsup_{p\to\oo} \frac{m_{kp}}{m_p}<\oo,$$
\item[(iii)] $\a(\m)<\oo$ and $\b(\m)>0$,
\item[(iv)] $\m$ is O-regularly varying and $\b(\m)>0$,
\item[(v)] $\displaystyle 0<\liminf_{t\to\oo} \frac{\nu_{\m}(t)}{\o_{\M}(t)}\leq \limsup_{t\to\oo}\frac{\nu_{\m}(t)}{\o_{\M}(t)} <\oo,$ 
 \item[(vi)] $\a(\nu_{\m})<\oo$ and $\b(\nu_{\m})>0$,
 \item[(vii)] $\a(\o_{\M})<\oo$ and $\b(\o_{\M})>0$.
\end{itemize}
 In this case, we also get that $\a(\o_{\M})=\a(\nu_{\m})=1/\b(\m)$, $\b(\o_{\M})=\b(\nu_{\m})=1/\a(\m)$.
\end{corollary}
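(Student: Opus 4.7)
The plan is to prove the equivalences by forming a cluster around condition (iii), which provides the most convenient index-theoretic characterization, and then spoke out to the other conditions using the results already established in the paper. Since $\M$ is assumed to be a weight sequence, it has (lc) by definition, so being strongly regular in (i) reduces to verifying the conjunction of (mg) and (snq). Corollary~\ref{coro:charact.mg} identifies (mg) with $\a(\m)<\oo$, while Corollary~\ref{coro:snq.ga1.gabeta}(i) together with Theorem~\ref{th:equality.ga.beta.mu.omega} identifies (snq) with $\gamma(\M)=\b(\m)>0$. This gives (i) $\Leftrightarrow$ (iii) immediately.

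For (iii) $\Leftrightarrow$ (iv), Lemma~\ref{lemma:lc.ORVSeq}(i) gives $\b(\m)\geq 0$ automatically because $\M$ is (lc), so the defining requirement $\b(\m)>-\oo$ for $\m$ to be O-regularly varying is free. By Remark~\ref{rema.Ch.Th.ORVSEq}, $\m$ is then O-regularly varying if and only if additionally $\a(\m)<\oo$; combining this with $\b(\m)>0$ gives the equivalence. The step (iii) $\Leftrightarrow$ (vi) is exactly the duality stated in Proposition~\ref{prop.duality.ORV.M.nuM}, namely $\b(\m)=1/\a(\nu_\m)$ and $\a(\m)=1/\b(\nu_\m)$, so $\b(\m)>0$ translates to $\a(\nu_\m)<\oo$ and $\a(\m)<\oo$ translates to $\b(\nu_\m)>0$. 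The equivalences (v) $\Leftrightarrow$ (vi) $\Leftrightarrow$ (vii) are precisely the content of Corollary~\ref{coro:nuOmEquiv}, and they also supply the identities $\a(\o_\M)=\a(\nu_\m)$, $\b(\o_\M)=\b(\nu_\m)$, from which the final equalities $\a(\o_\M)=\a(\nu_\m)=1/\b(\m)$ and $\b(\o_\M)=\b(\nu_\m)=1/\a(\m)$ follow by combination with Proposition~\ref{prop.duality.ORV.M.nuM}.

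The only step requiring care is (ii) $\Leftrightarrow$ (iii), because (ii) asks that \emph{one and the same} integer $k\geq 2$ witness both the lower and the upper bound on $m_{kp}/m_p$. The direction (ii) $\Rightarrow$ (iii) is straightforward: a finite $\limsup$ for any specific $k$ is condition (v) of Theorem~\ref{th:main.alpha.sequences} (take $\a$ large so $k^\a$ exceeds the $\limsup$), yielding $\a(\m)<\oo$, while $\liminf>1$ for that same $k$ gives, via Theorem~\ref{th:main.beta.sequences}(v) applied with $\b:=\tfrac{1}{2}\log_k(\liminf)>0$, that $\b(\m)>0$. For (iii) $\Rightarrow$ (ii), Theorem~\ref{th:main.beta.sequences}(v) produces some $k_0\geq 2$ with $\liminf_{p} m_{k_0 p}/m_p>1$; moreover Corollary~\ref{coro:charact.mg} ensures $\m\in ORV$, so $\limsup_p m_{k p}/m_p<\oo$ automatically for \emph{every} $k\geq 2$, and in particular for this $k_0$. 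Thus the same $k_0$ works, closing the cycle.

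I expect no serious obstacle: everything is packaging earlier results. The one place requiring a small flourish is coordinating the choice of $k$ in (ii) $\Leftrightarrow$ (iii), and this is resolved by observing that once $\a(\m)<\oo$ the finite $\limsup$ holds uniformly in $k$, so the constraint collapses to the single choice of $k$ coming from the lower bound.
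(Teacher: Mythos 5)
Your proposal is correct and follows essentially the same route as the paper, which states the corollary as a direct assembly of Remark~\ref{rema.Ch.Th.ORVSEq}, Corollaries~\ref{coro:snq.ga1.gabeta}, \ref{coro:charact.mg} and~\ref{coro:nuOmEquiv} together with Theorem~\ref{th:equality.ga.beta.mu.omega} and the duality of Proposition~\ref{prop.duality.ORV.M.nuM}. Your extra care in (ii)$\Leftrightarrow$(iii) — noting that $\a(\m)<\oo$ makes $\limsup_p m_{kp}/m_p$ finite for every $k$, so the single $k$ from the lower bound suffices — is exactly the right way to coordinate the two bounds and matches the paper's implicit use of Theorems~\ref{th:main.alpha.sequences}(v) and~\ref{th:main.beta.sequences}(v).
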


\begin{remark}\label{rema:SRSeqAllindicesDifferent}
From the last Corollary we see that, if $\M$ is strongly regular, then $\ga(\M),\o(\M) \in (0,\oo)$. Most of the classical examples of strongly regular sequences satisfy that $\o(\M) = \ga(\M)$. However, 
in~\cite[Example 4.18]{JimenezSanzSchindlLCSeqandPO} we have constructed a strongly regular sequence with $\ga(\M)=2$ and $\o(\M)=5/2$. This construction can be extended: given four mutually distinct positive values $0<\b<\mu<\ro<\a<\oo$, for all $a>b>1$ and $n\in\N_0$ we define
\begin{equation*} 
 \xi(t):= \left\lbrace
\begin{array}{l}
\a, \quad \text{for} \quad t\in[2^{a^n}, 2^{b a^{n}}), \\
\b, \quad \text{for} \quad t\in[2^{b a^n}, 2^{a^{n+1}}  ), 
\end{array}
  \right.
\end{equation*}
and $\xi(t)=1$ for all $t\in[1,2)$,  see~\cite[Sect. 2.2.5]{JimenezPhD} where other pathological examples have been constructed. We can consider the function 
$$\o(x)=\exp\left(\int^{x}_{1} \xi(u) \frac{du}{u}\right),\qquad t>1,$$
that is a nondecreasing function. A straightforward but tedious computation leads to
$$\mu(\o)=\frac{(b-1)\a+(a-b)\b}{a-1},\qquad \ro(\o)=\frac{a(b-1)\a+(a-b)\b}{b(a-1)},$$
then taking
$$b:=\frac{\a-\mu}{\a-\ro}, \qquad a:=b\,\,\frac{\ro-\b}{\mu-\b}= \frac{\a-\mu}{\a-\ro} \,\frac{\ro-\b}{\mu-\b} $$ 
we obtain 
 $$\b(\o)=\b,\quad\mu(\o)=\mu,\quad\ro(\o)=\ro,\quad \a(\o)=\a.$$
Finally, the sequence $\M$ defined in terms of its sequence of quotients by $m_p:=\o(p)$ for $p$ large enough, is strongly regular and 
$$\b(\m)=\b,\quad\mu(\bm)=\mu,\quad\ro(\bm)=\ro,\quad\a(\bm)=\a.$$ \para
\end{remark}

\subsection{Proximate orders and weight functions}
We recall now the notion of so-called proximate orders, see \cite{valuedistribution}, \cite{valiron} resp. \cite[Definitions 4.1, 4.2]{Sanzflat14} and \cite[Definition 2.1]{JimenezSanzSchindlLCSeqandPO} and the references therein.

\begin{definition}\label{proximateorder}
A function $\varrho:(c,+\infty)\rightarrow[0,+\infty)$ for some $c\ge 0$ is called a {\itshape proximate order} if the following conditions are satisfied:
\begin{itemize}
\item[$(A)$] $\varrho$ is continuous and piecewise continuously differentiable in $(c,+\infty)$,
\item[$(B)$] $\varrho(t)\ge 0$ for all $t>c$,
\item[$(C)$] $\lim_{t\rightarrow\infty}\varrho(t)=\rho<+\infty$,
\item[$(D)$] $\lim_{t\rightarrow\infty}t\varrho'(t)\log(t)=0$.
\end{itemize}
If the value $\rho$ in $(C)$ is positive, then $\varrho$ is called a {\itshape nonzero proximate order}, if $\rho=0$, then $\varrho$ is called a {\itshape zero proximate order}.

\end{definition}

Assuming that the function $d_\M(t)=\log(\o_\M(t))/\log t$ is a nonzero proximate order, or if it is close to one in the sense of \cite[Def. 4.1]{JimenezSanzSchindlLCSeqandPO}, the summability theory developed by A. Lastra, S. Malek and the second author, see~\cite{Sanzsummability}, is available. One may naturally try to check if this theory is also available for spaces of functions defined in terms of a weight function
and if this approach provides any new insight. 
We point out the classical connection between proximate orders and regular variation.

\begin{lemma}[\cite{Levin80}, Sect.\ I.12, p.32]\label{lemma:PO.to.RV}
Let $\varrho$ be a proximate order with $\lim_{t\to\oo} \varrho(t)=\ro$. Then, the function $V(t)=t^{\varrho(t)}\in R_\ro$.
\end{lemma}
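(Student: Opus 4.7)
The plan is to prove directly that for every $\lambda>0$,
\[
\lim_{t\to\infty}\frac{V(\lambda t)}{V(t)}=\lambda^{\rho},
\]
which is the definition of $V\in R_{\rho}$. I would start by writing
\[
\frac{V(\lambda t)}{V(t)}=\frac{(\lambda t)^{\varrho(\lambda t)}}{t^{\varrho(t)}}=\lambda^{\varrho(\lambda t)}\cdot t^{\varrho(\lambda t)-\varrho(t)}.
\]
The first factor is easy: by condition $(C)$, $\varrho(\lambda t)\to\rho$, so $\lambda^{\varrho(\lambda t)}\to\lambda^{\rho}$. The whole problem reduces to showing that the exponent $\bigl(\varrho(\lambda t)-\varrho(t)\bigr)\log t$ tends to $0$.

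Here I would use conditions $(A)$ and $(D)$. Since $\varrho$ is piecewise $C^{1}$, for $\lambda>1$ and $t$ sufficiently large the fundamental theorem of calculus gives
\[
\varrho(\lambda t)-\varrho(t)=\int_{t}^{\lambda t}\varrho'(u)\,du=\int_{t}^{\lambda t}\bigl(u\varrho'(u)\log u\bigr)\frac{du}{u\log u}.
\]
By $(D)$, the bracketed factor tends to $0$, so given $\varepsilon>0$ there is $T$ with $|u\varrho'(u)\log u|<\varepsilon$ for $u\ge T$. For $t\ge T$ this yields
\[
|\varrho(\lambda t)-\varrho(t)|\le\varepsilon\int_{t}^{\lambda t}\frac{du}{u\log u}=\varepsilon\bigl(\log\log(\lambda t)-\log\log t\bigr)=\varepsilon\log\!\left(1+\frac{\log\lambda}{\log t}\right).
\]

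Multiplying by $\log t$ and using $\log(1+x)\le x$ for $x\ge 0$,
\[
\bigl|\varrho(\lambda t)-\varrho(t)\bigr|\log t\le\varepsilon\log t\cdot\log\!\left(1+\frac{\log\lambda}{\log t}\right)\le\varepsilon\log\lambda,
\]
for all $t$ large enough. Since $\varepsilon>0$ was arbitrary, $\bigl(\varrho(\lambda t)-\varrho(t)\bigr)\log t\to 0$, which proves the claim for $\lambda>1$. The case $0<\lambda<1$ follows by exchanging the roles of $t$ and $\lambda t$ (write the integral from $\lambda t$ to $t$), and $\lambda=1$ is trivial. Combining both factors gives $V(\lambda t)/V(t)\to\lambda^{\rho}$, so $V\in R_{\rho}$ by definition.

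The main obstacle is really just the bookkeeping: ensuring that the estimate $\log t\cdot\log(1+\log\lambda/\log t)$ remains bounded (indeed tends to $\log\lambda$) as $t\to\infty$, which is what makes condition $(D)$—with its precise factor $\log t$—exactly the right hypothesis. The piecewise nature of $\varrho'$ is harmless since it only fails on a countable set, and the integral representation remains valid.
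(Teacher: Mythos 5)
Your proof is correct: the decomposition $V(\lambda t)/V(t)=\lambda^{\varrho(\lambda t)}\,t^{\varrho(\lambda t)-\varrho(t)}$, the use of $(C)$ for the first factor, and the estimate of $(\varrho(\lambda t)-\varrho(t))\log t$ via condition $(D)$ and $\int_t^{\lambda t}\frac{du}{u\log u}=\log\bigl(1+\frac{\log\lambda}{\log t}\bigr)\le\frac{\log\lambda}{\log t}$ are all sound, and the piecewise smoothness of $\varrho$ indeed causes no trouble for the fundamental theorem of calculus. The paper gives no proof of this lemma (it is quoted from Levin's book), and your argument is essentially the classical one found there, so there is nothing further to add.
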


If there exists a proximate order $\varrho$ and positive constants $A,B$ such that
\begin{equation*}\label{eq:AdmitPODef}
 A\leq \frac{\sigma(t)}{t^{\varrho(t)}} \leq B \qquad\text{for $t$ large enough},
\end{equation*}
we say that $\sigma$ {\it admits  $\varrho$ as a proximate order}. As a consequence of Remark~\ref{rema.properties.Matuszewska.indices} and Lemma~\ref{lemma:PO.to.RV} we obtain the following corollary. 

\begin{corollary}\label{coro:weightfunction.proximateorders}
 Let $\sigma:[0,\oo)\to[0,\oo)$, nondecreasing with $\lim_{t\rightarrow\infty}\sigma(t)=\infty$ admitting $\varrho$ as a proximate order with $\lim_{t\rightarrow\infty}\varrho(t)=\ro$. Then $\b(\sigma)=\mu(\sigma)=\ro(\sigma)=\a(\sigma)=\ro$.
 Consequently, if $\sigma$ admits a nonzero proximate order, it satisfies \hyperlink{om1}{$(\omega_1)$} and \hyperlink{om6}{$(\omega_6)$}.
\end{corollary}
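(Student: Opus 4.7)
The plan is to reduce this statement to already-established machinery: Lemma~\ref{lemma:PO.to.RV} converts the proximate order hypothesis into regular variation, and then Remarks~\ref{rema.properties.Matuszewska.indices} and \ref{rema:ordersROMU} transfer the equality of indices from the regularly varying auxiliary function $V(t)=t^{\varrho(t)}$ to $\sigma$.

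First, I would apply Lemma~\ref{lemma:PO.to.RV} to $\varrho$ to conclude that $V(t):=t^{\varrho(t)}$ is regularly varying of index $\ro$, that is $V\in R_\ro$. By the hypothesis that $\sigma$ admits $\varrho$ as a proximate order, there exist constants $A,B>0$ and some $t_0>0$ such that $A\le \sigma(t)/V(t)\le B$ for all $t\ge t_0$. In particular, condition \eqref{eq:equiv.functionsORV} holds for $\sigma$ and $V$ on $[t_0,\infty)$.

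Next, by Remark~\ref{rema.properties.Matuszewska.indices}, the four quantities $\b$, $\mu$, $\ro$, $\a$ are invariant under the relation \eqref{eq:equiv.functionsORV}, so
\[
\b(\sigma)=\b(V),\quad \mu(\sigma)=\mu(V),\quad \ro(\sigma)=\ro(V),\quad \a(\sigma)=\a(V).
\]
On the other hand, by Remark~\ref{rema:ordersROMU}, any function in $R_\ro$ satisfies $\b(V)=\mu(V)=\ro(V)=\a(V)=\ro$. Combining both identities yields the announced chain $\b(\sigma)=\mu(\sigma)=\ro(\sigma)=\a(\sigma)=\ro$.

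For the consequence, assume $\varrho$ is a nonzero proximate order, so $\ro>0$. Then $\a(\sigma)=\ro<\infty$, and by Corollary~\ref{coro:om1} this is equivalent to $\sigma$ satisfying \hyperlink{om1}{$(\omega_1)$}. Similarly, $\b(\sigma)=\ro>0$, and Corollary~\ref{coro:om6} translates this into $\sigma$ having \hyperlink{om6}{$(\omega_6)$}. There is no real obstacle here: all the work has been done in the preceding sections, and this corollary is essentially a bookkeeping statement showing that nonzero proximate orders fit cleanly into the O-regular variation framework developed above.
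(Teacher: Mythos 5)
Your proposal is correct and follows exactly the route the paper intends: Lemma~\ref{lemma:PO.to.RV} gives $V(t)=t^{\varrho(t)}\in R_\ro$, the proximate-order bounds give \eqref{eq:equiv.functionsORV}, and Remarks~\ref{rema.properties.Matuszewska.indices} and~\ref{rema:ordersROMU} together with Corollaries~\ref{coro:om1} and~\ref{coro:om6} finish the argument. Nothing is missing.
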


In \cite{RainerSchindlcomposition} and \cite{Schindldissertation}, with each normalized weight function $\o$ satisfying 
\hyperlink{om3}{$(\omega_3)$} a weight matrix $\Omega:=\{\mathbb{W}^\ell=(W^\ell_j)_{j\in\N_0}: l>0\}$ has been associated, defined by
$$W^\ell_j:=\exp\left(\frac{1}{\ell}\varphi^{*}_{\omega}(\ell j)\right),$$
where $\varphi^{*}_{\o}(x):=\sup\{x y-\o(e^y): y\ge 0\}.$ We summarize some facts which are shown in \cite[Section~5]{RainerSchindlcomposition}:
for all $l>0$ we have $\mathbb{W}^l$ is a weight sequence and $\omega$ has \hyperlink{om6}{$(\omega_6)$} if and only if some/each $W^\ell$ satisfies \hyperlink{mg}{(mg)}.
In case \hyperlink{om6}{$(\omega_6)$}  is satisfied, $\Omega$ is constant, i.e. $\mathbb{W}^x\hyperlink{approx}{\approx}\mathbb{W}^y$ for all $x,y>0$ by recalling \cite[Lemma 5.9]{RainerSchindlcomposition}, so the associated ultradifferentiable resp. ultraholomorphic class defined by the weight $\omega$ can already be represented by a single sequence $\mathbb{W}^x$.\para

In the same way as Carleman ultradifferentiable or ultraholomorphic classes may be defined by imposing control of the derivatives by a sequence $\widehat{\M}=(p!M_p)_{p\in\N_0}$, as in \eqref{eq:estimation.f.factorialIN}, or by a sequence $\M$, as in \eqref{eq:estimation.f.factorialOUT}, and so the properties of the class are deduced from conditions on the sequence $\widehat{\M}$, respectively $\M$, one could be tempted to take two corresponding different approaches for introducing ultradifferentiable or ultraholomorphic classes with respect to a Braun-Meise-Taylor weight function or with respect to the associated weight matrix: to work with a given weight function $\omega$ playing the role of $\widehat{\M}$, or consider instead the weight $(\omega^*)^{\iota}$, playing (as explained above, see Proposition~\ref{prop:indices.upperconjugate} or Corollary~\ref{coro:OmhatMOmMgamma}) the previous role of $\M$. However, as far as the interest of proximate orders in this respect is concerned, no difference appears in both settings since, if any of both weights admits a nonzero proximate order it will have \hyperlink{om6}{$(\omega_6)$} by Corollary~\ref{coro:weightfunction.proximateorders}, the same will be true for the other weight function according to Remark~\ref{rema.gammaoverline}, and the corresponding associated weight matrices will be constant, so that the classes defined by any of these procedures can be defined by a single weight sequence and no new, richer structure is obtained in any case.

\section{A (counter)-example comparing \texorpdfstring{$\gamma(\M)$}{GammaM} and \texorpdfstring{$\gamma(\omega_\M)$}{GammaOmM}}\label{counterexample}

The objective of the example constructed in this section is to show that the inequality in Corollary~\ref{coro:IndMnuMOmM}.(i) can be strict. 
The example is suitable for both approaches, commented before Corollary~\ref{coro:widehat.M.beta} and at the beginning of Subsection~\ref{subsect.OmM.OmhatM}, that is, for classes defined in terms of $\M$ or of $\widehat{\M}=(p!M_p)_{p\in\N_0}$. According to Corollary~\ref{coro:IndMnuMOmM}.(iii), the sequences $\M$ and $\widehat{\M}$ can not satisfy \hyperlink{mg}{(mg)} , but even if \hyperlink{mg}{(mg)} is violated  
$\ga(\M)$ and $\ga(\o_\M)$ might be equal. \para

We construct a (counter)-example of a weight sequence $\M$, satisfying the following properties:
\begin{enumerate}[(i)]
 \item $\ga(\M)=0$. Consequently, by Proposition~\ref{prop.roots.gevrey.multiply.Mat.ind.order} and Corollary~\ref{coro:snq.ga1.gabeta} $\ga(\widehat{\M})=1$, so $\M$ does not satisfy \hyperlink{snq}{(snq)}  and $\widehat{\m}$ does not satisfy \hyperlink{gar}{$(\gamma_1)$}.
  \item $\o(\M)=\oo$. Hence $\ro(\M)=\a(\M)=\oo$ and  $\o(\widehat{\M})=\ro(\widehat{\M})=\a(\widehat{\M})=\oo$.
 \item $\b(\o_\M)=\mu(\o_\M)=\ro(\o_\M)=\a(\o_\M)=0$, then $\ga(\o_\M)=\oo$ and $\o_\M$  has \hyperlink{omsnq}{$(\omega_{\on{snq}})$} (see Corollary~\ref{coro:omsnq}),
 \item $\b(\o_{\widehat{\M}})=\mu(\o_{\widehat{\M}})=\ro(\o_{\widehat{\M}})=\a(\o_{\widehat{\M}})=0$, then $\ga(\o_{\widehat{\M}})=\oo$ and $\o_{\widehat{\M}}$  has also \hyperlink{omsnq}{$(\omega_{\on{snq}})$}. 
\end{enumerate}\para

{\itshape Note:} For any weight sequence $\L\approx\M$, due to the stability of the indices under equivalence, see Corollary~\ref{coro:stability.gamma.weak.equiv} and Remark~\ref{rema:stabilityOmM}, conditions (i) and (ii) also hold for $\L$. Moreover, since $\o_\M$ has  \hyperlink{om1}{$(\omega_{\on{1}})$} we deduce that $\o_\M\hyperlink{sim}{\sim}\o_\L$ and also conditions (iii) and (iv) hold true for $\o_\L$.

\subsection{Construction of \texorpdfstring{$\M$}{M}}
We give now the explicit construction of $\M$, inspired by the Representation Theorem~\ref{th:rep.ORVS}, and prove all the desired properties. For all $p\ge1$, we put
\begin{equation*}
m_p:=\exp\left(\sum_{k=1}^{p}\delta_k\right),\qquad m_0:=1,
\end{equation*}
for some sequence $(\delta_k)^\oo_{k=1}$ of nonnegative real numbers. By definition such sequence of quotients is nondecreasing, so $\M$ is \hyperlink{lc}{(lc)}. \para

The sequence $(\delta_k)_k$ is introduced as follows: First consider two sequences of natural numbers $(a_j)_{j\ge 1}$ and $(b_j)_{j\ge 1}$, which are defined recursively by
$a_1:=1$ and for every $j\in\N$, $b_j:=2a_j$ and $a_{j+1}:=b_j^2$. Then for all $j\in\N$ we get
\begin{equation*}
a_j=2^{2(2^{j-1}-1)},\qquad b_j=2^{2^j-1}.
\end{equation*}
The sequence $(\delta_k)_{k\ge 1}$ is defined now by
\begin{equation*}
\delta_k:=c_j=2^{2^{j+1}} \quad \text{if}\;a_j+1\le k\le b_j;\qquad\delta_p:=0\quad \text{if}\;b_j+1\le k\le a_{j+1},\quad j\ge 1.
\end{equation*}
{\itshape First immediate consequence:} We observe that $m_{a_j}\geq \exp(\de_{a_j})= \exp (c_j)$ then $\lim_{p\to\oo} m_p=\oo$ and $\M$ is a weight sequence. \para

{\itshape Second immediate consequence:} $\ga(\M)=0$, i.e., (i) holds.  For all $k\in\N$, $kb_j<a_{j+1}$ for all $j\ge 1$ large enough (depending on given $k$). 
For all $k\in\N$ and all $j\ge 1$ large enough $ m_{kb_j}/m_{b_p}=1$ and, by Theorem~\ref{th:main.beta.sequences}, we conclude that $\ga(\M)=0$.\para

For convenience we put $L_p:=\log(m_p)/p$,  for all $p\ge 1$, and we will show that 
\begin{equation}\label{eq:Lpinfinity}
 \lim_{p\to\oo} L_p =\lim_{p\to\oo} \frac{\log(m_p)}{p} =\lim_{p\to\oo}  \frac{1}{p} \sum_{j=1}^p \de_j =\oo. 
\end{equation}
For all $j\geq 1$ we get
$$L_{b_j}=\frac{1}{b_j}\sum_{i=1}^j(b_i-a_i)c_i=\frac{1}{b_j}\sum_{i=1}^ja_ic_i,\qquad L_{a_{j+1}}=\frac{1}{a_{j+1}}\sum_{i=1}^{j}(b_i-a_i)c_i=\frac{1}{a_{j+1}}\sum_{i=1}^{j}a_ic_i.$$
By definition $p\mapsto L_p$ is nonincreasing on each $[b_j,a_{j+1}]$, $j\ge 1$. With a direct computation, we can show that the choice of $c_j$ is sufficient for $p\mapsto L_p$ being nondecreasing on $[a_j,b_j]$, for all $j\ge 2$. An easy calculation leads to $2L_{b_j}\geq c_j \geq L_{a_j}\geq c_{j-2}$ for all $j\geq3$. Hence \eqref{eq:Lpinfinity} is valid.\para

{\itshape Third immediate consequence:} $\o(\M)=\oo$ because $\lim_{p\to\oo} \log(m_p)/\log p =\oo$ by~\eqref{eq:Lpinfinity}. By Remark~\ref{rema.Ch.Th.ORVSEq} and Proposition~\ref{prop.roots.gevrey.multiply.Mat.ind.order}, condition (ii) is valid and we conclude that neither $\M$ nor $\widehat{\M}$ have \hyperlink{mg}{(mg)}.\para

\subsection{Proving \texorpdfstring{$\gamma(\o_{\widehat{\M}})=\oo$}{Ga(OmM)}}

The most arduous part of the example is the proof of $\ga(\o_{\widehat{\M}})=\oo$. The goal is to show that  $\o_{\widehat{\M}}$ has $(P_{\omega_{\widehat{\M}},1/s})$ for all $s>0$ small enough. 
Using \eqref{equarelaMdetnuder} and that $\gamma(\omega_{\widehat{\M}})>1/s$ if and only if $\gamma((\omega_{\widehat{\M}})_{1/s})>1$, where $(\o_{{\widehat{\M}}})_{1/s}(t)=\o_{{\widehat{\M}}}(t^{1/s})$,
 we can show that  for $\omega_{\widehat{\M}}$ to satisfy $(P_{\omega_{\widehat{\M}},1/s})$, it suffices to prove that there exists $C\geq1$ such that for all $p\in\N$
\begin{equation}\label{eq:POms.sufficient}
1+\frac{(\widehat{m}_{p+1})^s}{p}\sum_{j=p}^{\infty}\frac{1}{(\widehat{m}_{j+1})^{s}}\le (C-s)\log\left(\frac{\widehat{m}_p}{(\widehat{M}_p)^{1/p}}\right)+\frac{C}{p}.
\end{equation}
With a careful computation one can show that \eqref{eq:POms.sufficient} holds for all $s\in(0,1)$. Then $\ga(\o_{\widehat{\M}})=\oo$.\para

{\itshape Final consequence:} $\ga(\o_{\M})=\oo$ by Corollary~\ref{coro:OmhatMOmMgamma}. Hence (iii) and (iv) hold by Remark~\ref{rema:ordersROMU}, Lemma~\ref{lemma.alpha.gamma} and Corollary~\ref{coro:omsnq}.\para

\textbf{Acknowledgements}: The first two authors are partially supported by the Spanish Ministry of Economy, Industry and Competitiveness under the project MTM2016-77642-C2-1-P. The first author is partially supported by the University of Valladolid through a Predoctoral Fellowship (2013 call) co-sponsored by the Banco de Santander. The third author is supported by FWF-Project J~3948-N35, as a part of which he is an external researcher at the Universidad de Valladolid (Spain) for the period October 2016 - September 2018.\para

\bibliographystyle{plain}
\bibliography{Bibliography}
\vskip1cm

\textbf{Affiliation}:\\
J.~Jim\'{e}nez-Garrido, J.~Sanz:\\
Departamento de \'Algebra, An\'alisis Matem\'atico, Geometr{\'\i}a y Topolog{\'\i}a, Universidad de Valladolid\\
Facultad de Ciencias, Paseo de Bel\'en 7, 47011 Valladolid, Spain.\\
Instituto de Investigaci\'on en Matem\'aticas IMUVA\\
E-mails: jjjimenez@am.uva.es (J.~Jim\'{e}nez-Garrido), jsanzg@am.uva.es (J. Sanz).\\
\vspace{5pt}\\
G.~Schindl:\\
Departamento de \'Algebra, An\'alisis Matem\'atico, Geometr{\'\i}a y Topolog{\'\i}a, Universidad de Valladolid\\
Facultad de Ciencias, Paseo de Bel\'en 7, 47011 Valladolid, Spain.\\
E-mail: gerhard.schindl@univie.ac.at.

\end{document}